\documentclass[11pt]{elsarticle}
\usepackage{titlecaps}
\Addlcwords{the of into via for and of on in an to hp-finite with}

\usepackage{float,graphicx,amsmath,amsthm,amssymb,amsfonts}
\newcommand{\bx}[0]{\mathbf{x}}
\newcommand{\by}[0]{\mathbf{y}}
\usepackage{lipsum,booktabs}
\usepackage{appendix}

\usepackage{titlecaps}
\Addlcwords{the of into via for and of on in an to hp-finite with}
\usepackage[textwidth=125pt,textsize=small]{todonotes}

\usepackage{changepage}
\newcommand{\CH}[0]{\mathcal{H}}
\newcommand{\CV}[0]{\mathcal{V}}
\newcommand{\RR}[0]{\mathbb{R}}

\newtheorem{theorem}{Theorem}
\newtheorem{remark}{Remark}
 \usepackage{booktabs}
 \usepackage{multirow}
\newtheorem{lemma}{Lemma}

\usepackage{hyperref}
\usepackage[nameinlink]{cleveref}
\usepackage{autonum}

\begin{document}
\begin{frontmatter}

\title{Spectral Method for the Fractional Laplacian in 2D and 3D}

\author[label1]{Kailai Xu}
 \ead{kailaix@stanford.edu}
 \address[label1]{Institute for Computational and Mathematical Engineering, Stanford University, Stanford, CA, 94305}
 \tnotetext[t1]{The first author thanks the Stanford Graduate Fellowship in Science \& Engineering and the 2018 Schlumberger Innovation Fellowship for the financial support.}
 
 \author[label2]{Eric Darve}
 \ead{darve@stanford.edu}
 \address[label2]{Mechanical Engineering, Stanford University, Stanford, CA, 94305}

\begin{abstract}
    A spectral method is considered for approximating the fractional Laplacian and solving the fractional Poisson problem in 2D and 3D unit balls. The method is based on the explicit formulation of the eigenfunctions and eigenvalues of the fractional Laplacian in the unit balls under the weighted $L^2$ space. The resulting method enjoys spectral accuracy for all fractional index $\alpha\in (0,2)$ and is computationally efficient due to the orthogonality of the basis functions. We also proposed a numerical integration strategy for computing the coefficients. Numerical examples in 2D and 3D are shown to demonstrate the effectiveness of the proposed methods.
\end{abstract}
\begin{keyword}
Spectral Method \sep Fractional Laplacian \sep Orthogonal Polynomials


\end{keyword}
\end{frontmatter}

\section{Introduction}

Fractional PDEs have attracted considerable attention recently due to its applications in soft matter~\cite{chen2004soft}, elasticity~\cite{dipierro2015dislocation}, turbulence~\cite{bakunin2008turbulence}, anomalous diffusion~\cite{bologna2000anomalous}, finance~\cite{woyczynski2001levy}, image denoising~\cite{gatto2015numerical},  porous media flow~\cite{vazquez2012nonlinear}, etc. Among different fractional operators, the fractional Laplacian has been intensively studied in the recent literature. For example, \cite{Chen2004} uses the fractional Laplacian for linear and nonlinear lossy media. \cite{cont2005finite} uses the fractional Laplacian for option pricing in jump diffusion and exponential L\'evy models. Recently, \cite{epps2018turbulence} provides the first ever derivation of the fractional Laplacian operator as a means to represent the mean friction in the turbulence modeling. For more application examples, see \cite{ros2014integro}.

Although the fractional Laplacian is studied extensively for operators on $\mathbb{R}^d$, the community has not yet reached an agreement on the definition of the fractional Laplacian on a bounded domain. We mention that two popular definitions are the \textit{spectral fractional Laplacian} and the \textit{integral fractional Laplacian}~(also called \textit{Dirichlet fractional Laplacian}), which are listed in \cref{tab:defs}~\cite{lin2016isogeometric}.  The spectral fractional Laplacian is defined by considering the eigenvalues and eigenfunctions of the Laplacian operator $-\Delta$ in $\Omega$ with zero Dirichlet boundary data on $\partial \Omega$ while the integral fractional Laplacian is defined by first performing a zero extension of the original function to $\RR^d$ and then using the fractional Laplacian definition on $\RR^d$.

\begin{table}[htbp]
\centering
\begin{tabular}{@{}p{1.6cm}p{5.5cm}p{4.5cm}@{}}
\toprule
                            & Spectral & Dirichlet \\ \midrule \\[-8pt]
Definition                  &    $
\begin{cases}
u(\bx) = \sum_{k=1}^\infty a_k \varphi_k(\bx)    \\
-(-\Delta)_S^{\alpha/2} u = \sum_{k=1}^\infty a_k\lambda_k^\alpha \varphi_k(\bx)
\end{cases}
$      &      \parbox{4cm}{$ (-\Delta)^{\alpha/2} u(\bx)$\quad $=\qquad c_{\alpha,d} 
     \mathrm{P.V.}\int_{\mathbb{R}^d} \frac{u(\bx) - u(\by)}{|\bx-\by|^{d+\alpha}}d\by$} \\[18pt]
\parbox{2cm}{Fractional Poisson Equation} $\qquad$ &    $
    \begin{cases} 
(-\Delta)^{\alpha/2} u(\bx) = f  & \mbox{in } \Omega \\
u(\bx) = 0 & \mbox{on } \partial\Omega
\end{cases}
$      &        $\begin{cases} 
(-\Delta)^{\alpha/2} u(\bx) = f  & \mbox{in } \Omega    \\
u(\bx) = 0 & \mbox{in } \Omega^c
\end{cases}$     \\[20pt]
 Note                     &    $(\lambda_k, \varphi_k)$ are eigenpairs of $\Delta$     &           \\\bottomrule
\end{tabular}
\caption{A comparison of two popular definitions of the fractional Laplacian on the bounded domain.}
\label{tab:defs}
\end{table}


%

In this paper, we will focus on the integral fractional Laplacian, for the reasons that will be stated soon. The integral fractional Laplacian is defined by  
\begin{equation} \label{eq:fl}
    (-\Delta)^{\alpha/2} u(\bx) : = c_{\alpha,d} 
     \mathrm{P.V.}\int_{\mathbb{R}^d} \frac{u(\bx) - u(\by)}{|\bx-\by|^{d+\alpha}}d\by
\end{equation}
where P.V.\ stands for the Cauchy principal value, and
\begin{equation}\label{equ:coe f}
    c_{\alpha,d} =\frac{{{2^\alpha }\Gamma \left( {\frac{{d + \alpha }}{2}} \right)}}{{{\pi ^{d/2}}\left| {\Gamma \left( -{\frac{\alpha }{2}} \right)} \right|}}
\end{equation}
The corresponding fractional Poisson problem is
\begin{equation}
  \begin{aligned} \label{equ:model}
(-\Delta)^{\alpha/2} u(\bx) &= f \qquad && \bx \in \Omega    \\
u(\bx) &= 0 && \bx \in \Omega^c
\end{aligned}
\end{equation}

Compared to the spectral definition, the integral definition requires the boundary condition to be defined on $\Omega^c$ instead of $\partial \Omega$. This definition can be derived from a killed L\'evy process in the bounded domain $\Omega$~\cite{duo2017comparative}. 

Up till now, no definition is better justified than the other without more assumptions. However, if we consider the fractional Laplacian as a way to describe long-range interactions~(or long tail effects), any effort to restrict the definition on a bounded domain is unsuitable. Indeed, the long-range interaction characterization makes it necessary to consider the whole domain $\RR^d$ instead of a bounded one, and the limits of the bounded domain definitions as $\Omega$ tends to $\RR^d$ coincide. Thus it is  reasonable to view the definitions on a bounded domain mentioned above as an approximation to the fractional Poisson problem
\begin{equation}\label{equ:ff}
    (-\Delta)^{\alpha/2} u(\bx) = f(\bx) \qquad  \bx \in \RR^d  
\end{equation}
where the support of the source term is restricted to a certain region and the domain of interest is bounded. The fractional Poisson equation with either the spectral or the integral fractional Laplacian tries to approximate \cref{equ:ff} by a problem defined on bounded domains. For the latter case, the value of $u(\bx)$ far away from the domain of interest is simply truncated to zero.

Viewed in this fashion, whichever  model we choose to approximate \cref{equ:ff} is equivalently valid for our purpose, if numerically dealt with carefully. However, in this article, we focus on \cref{equ:model}, for which we can propose an efficient and accurate spectral method. We will focus on 2D and 3D which are practically useful. In addition, throughout the paper, we focus on $\Omega = B_1^d(0)$, $d=$ 2, 3, the unit balls in $\RR^d$, which are most useful in practice. It is possible to generalize to $d\geq 4$. We also use the notation $f(x) \lesssim$ ($\gtrsim$) $g(x)$ to denote $f(x) = \mathcal{O}(g(x))$ ($g(x) = \mathcal{O}(f(x))$).

There have already been extensive theoretical investigations. However, only until recently has there been extensive literature that deals with the numerical computation of the nonlocal operator. Compared to the normal Laplacian operator, it presents many numerical challenges which will be discussed in the next section. 

For clarification, we list the notation used in the paper in \cref{tab:notation}.

\begin{table}[htpb]
\centering
\begin{tabular}{@{}ll@{}}
\toprule
Notation &  Description \\ \midrule
$(-\Delta)^{\alpha/2}$ &  The fractional Laplacian with index $\alpha/2$\\
$(-\Delta)_S^{\alpha/2}$ &  The spectral fractional Laplacian with index $\alpha/2$\\
P.V.\ & Principle value integration\\
$c_{\alpha,d}$ & Coefficient for the fractional Laplacian\\
$\Omega = B_1^d(0)$ & Unit ball in $\RR^d$~(an open set)\\
$\bar B_1^d(0)$ & Closure of the unit ball in $\RR^d$\\
${}_a^{RL}D_x$, ${}_x^{RL}D_b$ &  Left/Right Riemann-Liouville derivatives\\
$P_{l,m,n}(\bx)$ & Eigenfunctions of the fractional Laplacian \cref{equ:plmn} \\
$d^{\alpha,d}_{n,l}$ & Eigenvalues of the fractional Laplacian  \cref{equ:dnl}\\
$w(\bx)$ & Weight function $w(\bx) = (1-|\bx|)^{\frac\alpha2}_+$\\
$\alpha _{n,l}^{\alpha ,d}$ & Expansion coefficients of $u(\bx)$ in terms of $\{p_{l,m,n}\}$\\
$M_{d,l}$ & $M_{d,l} = \frac{d+2l-2}{d+l-2}\begin{pmatrix}
        d+l-2\\
        l
    \end{pmatrix}$, \cref{equ:mdl} \\
$p_{l,m,n}$ & $p_{l,m,n}=w(\bx)P_{l,m,n}(\bx)$ \\
$Y_l^m$ & Spherical harmonics\\
$Y_{l,m}$ & Real spherical harmonics \cref{equ:real} \\
$V_{l,m}$ & Solid spherical harmonics\\
$P_n^{(\alpha ,\beta )}(z)$ & Jacobi polynomials \cref{equ:jacobip}\\
$\mathbb{P}_K$ & Set of all polynomials in $\RR^d$ with degrees no larger than $K$\\
$\mathcal{P}_K$ & Space of homogeneous polynomials of degree $K\geq 0$\\
$L$ & Solution operator of \cref{equ:model}\\
$f_{NL}$ & Projection of $f$ onto $\{P_{l,m,n}\}_{0\leq l\leq L, 0\leq n\leq N, 1\leq m \leq M_{d,l}}$ \cref{equ:fnl}\\
$u_{NL}$ & $u_{NL} = Lf_{NL}$\\
$L_{NL}$ & Approximation solution operator of \cref{equ:model} $u_{NL} = Lf$\\
 \bottomrule
\end{tabular}
\caption{Notations frequently used in the paper}
\label{tab:notation}
\end{table}

\section{Fractional Poisson Problem}

There are several challenges in solving \cref{equ:model} numerically:
\begin{itemize}
    \item The \textit{principle value} integration of the integrand. This can be resolved in several ways. For example, \cite{2018arXiv180203770M} subtracts the singularity by the approximation of $u(\bx)-u(\by) \approx \sigma(|\bx-\by|) \sum_{|\beta|=1}(\bx-\by)^\beta D^\beta u(\by)$ where $\sigma$ is a window function with compact support and $1-\sigma(|\bx-\by|) = \mathcal{O}(|\bx-\by|^4), \bx\rightarrow \by$. The symmetry of the principle value integration is used to eliminate the first and third order derivatives. This method takes advantage of the $C^3$ continuity of the basis functions. \cite{duo2018novel} took a different approach and split the kernel into 
     \begin{equation}
         \frac{1}{|\bx-\by|^{d+\alpha}} = \frac{1}{|\bx-\by|^{\gamma}}\frac{1}{|\bx-\by|^{d+\alpha-\gamma}}
     \end{equation}
     in this way, they decomposed the principle value integration into a singular part which can be evaluated analytically and a nonsingular part which can be evaluated with regular quadratures. 
    \item The \textit{non-locality} behavior of the operator. Numerically this usually leads to a dense matrix which requires much more storage and has high computational cost. Compression techniques are usually applied to reduce the cost while sacrificing some accuracy. For example, \cite{ainsworth2017towards} applies panel clustering method to the finite element method and reduce both the storage and computation cost to {$\mathcal{O}(n\log^{2d} n)$}. However, the cost reduction is down by a trade-off with accuracy and achieving optimal convergence leads back to a full matrix. When the problem is solved on a rectangle domain and regular grid, fast transformation methods such as the FFT can be applied, and the cost is reduced to $\mathcal{O}(n\log n)$~\cite{2018arXiv180203770M, duo2018novel}.
    \item The reduced convergence rate for $C^{0,\alpha}$ solutions. For solutions that are less smooth, e.g., $C^{0,\alpha}$, contemporary methods usually suffer from reduced convergence, and the rates become $\alpha$-dependent. For example, the finite difference method in \cite{duo2018novel} can approximate the fractional Laplacian with order $\mathcal{O}(h^{1-\frac\alpha2})$, which converges slowly for $\alpha\approx 2$. The finite element method proposed in \cite{acosta2017fractional} only convergence like $\mathcal{O}(h^\frac12)$ in the energy space. The convergence rate can be improved to $\mathcal{O}({h})$ for $1<\alpha<2$ at the expense of adopting a graded mesh. The reduced convergence rate is mainly due to the reduced regularity of the solutions: indeed, a standard result for the fractional Poisson problem  \eqref{equ:model} is stated in \cref{thm:v-is-Calpha}, which generally says the solution will only be $C^{0,\alpha}$. The theorem also informs us that when designing numerical methods for the fractional Laplacian, we cannot simply assume that the solution is smooth. Instead, the ability to obtain good convergence for the $C^{0,\alpha}$ should serve as the metric for the efficiency of the numerical methods. 
    
Several methods that can lead to good convergence results exist. For example, in \cite{lu2017spectral}, the authors observed in 1D and $\Omega=[a,b]$, the integral fractional Laplacian operator takes the following form
    \begin{equation}\label{equ:flequiv}
        (-\Delta)^{\alpha/2}u(x) = \frac{{}_a^{RL}D_x^\alpha u(x) + {}_x^{RL}D_b^\alpha u(x) }{2\cos(\alpha\pi/2)}
    \end{equation}
where ${}_a^{RL}D_x$, ${}_x^{RL}D_b$ are the left/right Riemann-Liouville derivatives. Then the authors applied the Legendre collocation method to discretize the fractional Laplacian operator \cref{equ:flequiv} and obtained spectral accuracy. In \cite{acosta2018regularity} the authors also propose a spectral method for 1D problem, but from a different point of view. They take advantage of a set of explicit eigenfunctions and eigenvalues and use them as basis functions. This approach leads to an efficient and accurate method.
    
In \cite{khader2011numerical}, the authors applied the Chebyshev collocation method to the Caputo derivatives. \cite{hanert2010comparison} also considered the use of a Chebyshev spectral element method for the numerical approximation of the Riemann-Liouville derivatives. \cite{zayernouri2014fractional} proposed a spectral collocation method for left-sided steady-state fractional advection equation. In consideration of \cref{equ:flequiv}, these methods can also be adapted to the fractional Laplacian. 
    
We also mention that efforts have been made to solve \cref{equ:ff} directly. For example, in \cite{bueno2014fourier}, instead of studying the fractional Poisson equation in a bounded domain, they applied the Fourier spectral method to $\RR^d$, which leads to an easy-to-code approach.  This method is a workaround for the reduced regularity problem.
\end{itemize}

In this paper, we propose a spectral method that enjoys spectral accuracy for all $\alpha\in (0,2)$, overcoming the reduced convergence rate issue. This method is limited to a ball domain $B_1^d(0)$; but for approximation \cref{equ:ff} it is more than enough: we can always pick a large enough ball~(and scale it to $B_1^d(0)$) so that the numerical solution is a good approximation.  Due to its high accuracy, only a few terms are needed to achieve the same accuracy compared to the finite difference or finite element method. Besides, as we have explicit formulas for the fractional Laplacian of the basis functions, the principal value integration is no longer necessary.  This method can also be viewed as a generalization of \cite{acosta2018regularity}.

The codes for the paper will be publicly available at

{\footnotesize \url{https://github.com/kailaix/fractional_laplacian_spectral_method}}

\section{Preliminaries}

\subsection{Regularity of the Solution}

For the nonlocal equation \cref{equ:model}, a remarkable regularity result is \cite{ros2014dirichlet}

\begin{theorem}\label{thm:v-is-Calpha}
Let $\Omega$ be a bounded $C^{1,1}$ domain, $f\in L^\infty(\Omega)$, $u$ be a solution of \cref{equ:model}, and $\delta(x)={\rm dist}(x,\partial\Omega)$. Then, $u/\delta^{\alpha/2}|_\Omega$ can be continuously extended to $\overline\Omega$. 
Moreover,  we have $u/\delta^{{\alpha/2}}\in C^{s}(\overline{\Omega})$ and
\[ \|u/\delta^{\alpha/2} \|_{C^{s}(\overline \Omega)}\le C \|f\|_{L^\infty(\Omega)}\]
for some $s>0$ satisfying $s<\min\{\alpha/2,1-\alpha/2\}$. 
The constants $s$ and $C$ depend only on $\Omega$ and $\alpha$.
\end{theorem}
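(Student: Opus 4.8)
The plan is to follow the barrier-and-blow-up strategy behind the estimate of \cite{ros2014dirichlet}. Since the statement is an a priori estimate, one may assume $u$ is a classical solution coming from a mollified $f$ and pass to the limit at the end. \textbf{Step 1 (global $C^{\alpha/2}$ bound and sharp boundary decay).} One first proves $\|u\|_{L^\infty(\RR^d)}\le C\|f\|_{L^\infty(\Omega)}$ together with the optimal decay $|u(\bx)|\le C\|f\|_{L^\infty(\Omega)}\,\delta(\bx)^{\alpha/2}$. Both follow by comparison with explicit barriers, the building blocks being that $(x_d)_+^{\alpha/2}$ is $\alpha/2$-harmonic in the half-space $\{x_d>0\}$ and that $(1-|\bx|^2)_+^{\alpha/2}$ has constant fractional Laplacian in $B_1$ (Getoor's formula); the $C^{1,1}$ regularity of $\partial\Omega$ lets one fit rescaled, tilted copies of such barriers at every boundary point from both inside and outside. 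Combining $|u|\lesssim\delta^{\alpha/2}\|f\|_{L^\infty(\Omega)}$ with the interior Hölder estimate for $(-\Delta)^{\alpha/2}$, applied after rescaling on balls $B_r(\bx_0)$ with $r\sim\delta(\bx_0)$, yields $u\in C^{\alpha/2}(\RR^d)$ with norm $\lesssim\|f\|_{L^\infty(\Omega)}$, along with quantitative interior bounds on $u$ and its difference quotients in terms of $\delta$.

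\textbf{Step 2 (localization and flattening).} Write $v=u/\delta^{\alpha/2}$. The interior estimates of Step 1 reduce the claim, via a standard covering argument, to the bound $|v(\bx)-v(z)|\lesssim|\bx-z|^{s}\|f\|_{L^\infty(\Omega)}$ for $z\in\partial\Omega$ and $\bx$ near $z$. Near such a $z$, flatten $\partial\Omega$ by a $C^{1,1}$ change of variables, let $d(\bx)$ denote the distance to the tangent hyperplane at $z$, and use $|d(\bx)-\delta(\bx)|\lesssim|\bx-z|^{2}$; it then suffices to approximate $u$ near $z$ by scalar multiples of $d^{\alpha/2}$ with a controlled error.

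\textbf{Step 3 (iterative improvement, the core).} Construct constants $Q_k$ and prove by induction that
\[ \sup_{B_{2^{-k}}(z)}\bigl|u-Q_k\,d^{\alpha/2}\bigr|\le C\,2^{-k(\alpha/2+s)}\|f\|_{L^\infty(\Omega)},\qquad |Q_{k+1}-Q_k|\le C\,2^{-ks}\|f\|_{L^\infty(\Omega)}; \]
summability of $2^{-ks}$ then gives $Q_k\to Q_\infty$, which determines $v(z)$ (the flattening affects it only at higher order), and telescoping yields the $C^{s}$ bound at $z$. The induction step is closed by compactness: if it failed, the rescaled differences $w_k\sim\theta_k^{-1}2^{k(\alpha/2+s)}\bigl(u(2^{-k}\,\cdot\,)-Q_k\,d(2^{-k}\,\cdot\,)^{\alpha/2}\bigr)$, with $\theta_k\to\infty$ and $\|w_k\|_{L^\infty(B_1)}=1$, would — using the Step 1 estimates for equicontinuity and for controlling the nonlocal tails — subconverge to a function $w_\infty$ on $\overline{\{x_d\ge0\}}$ that is $\alpha/2$-harmonic in $\{x_d>0\}$, vanishes in $\{x_d\le0\}$, and grows strictly slower than $|\bx|$; a Liouville-type classification forces $w_\infty=K(x_d)_+^{\alpha/2}$, which can be absorbed into a redefinition of $Q_k$, contradicting $\|w_k\|_{L^\infty(B_1)}=1$.

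\textbf{Main obstacle.} The heart of the argument, and the step I expect to be genuinely delicate, is Step 3. It rests on (i) a Liouville/rigidity theorem identifying the $\alpha/2$-harmonic functions on a half-space that vanish on its complement and grow sublinearly as exactly the multiples of $(x_d)_+^{\alpha/2}$, and (ii) careful tracking of the nonlocal tails under rescaling, since $(-\Delta)^{\alpha/2}$ is global and the zero extension of $u$ outside $\Omega$ couples to the curvature of $\partial\Omega$. Both restrictions on $s$ are forced here: $s<\alpha/2$ makes $(x_d)_+^{\alpha/2}$ the only admissible blow-up profile, while $s<1-\alpha/2$ is exactly what lets the errors $|d-\delta|\lesssim|\bx-z|^{2}$ and the change of $(-\Delta)^{\alpha/2}$ under the $C^{1,1}$ flattening be absorbed into the inductive estimate.
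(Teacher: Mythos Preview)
The paper does not prove this theorem at all: it is quoted verbatim as a known regularity result, with the proof deferred entirely to the cited reference \cite{ros2014dirichlet} (Ros-Oton and Serra). There is therefore no ``paper's own proof'' to compare your proposal against.

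That said, your outline is a faithful sketch of the strategy actually used in \cite{ros2014dirichlet}: the barrier argument for the sharp boundary decay $|u|\lesssim\delta^{\alpha/2}$, interior estimates rescaled at distance $\delta(\bx_0)$, and the iterative improvement near boundary points via a compactness/blow-up argument terminating in a Liouville classification in the half-space. Your identification of the two constraints on $s$ --- $s<\alpha/2$ from the blow-up profile and $s<1-\alpha/2$ from the $C^{1,1}$ flattening error --- matches the original. One small caveat: in the original the induction is not closed by a pure compactness--contradiction argument as you describe, but by a quantitative oscillation-decay lemma (their Proposition~3.1 / Lemma~3.3) proved once and then iterated; the compactness is hidden inside the proof of that lemma rather than run at every scale. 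This is a matter of packaging, not of substance, and your plan would go through.
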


\begin{figure}[htpb]
\centering
\includegraphics[width=0.6\textwidth]{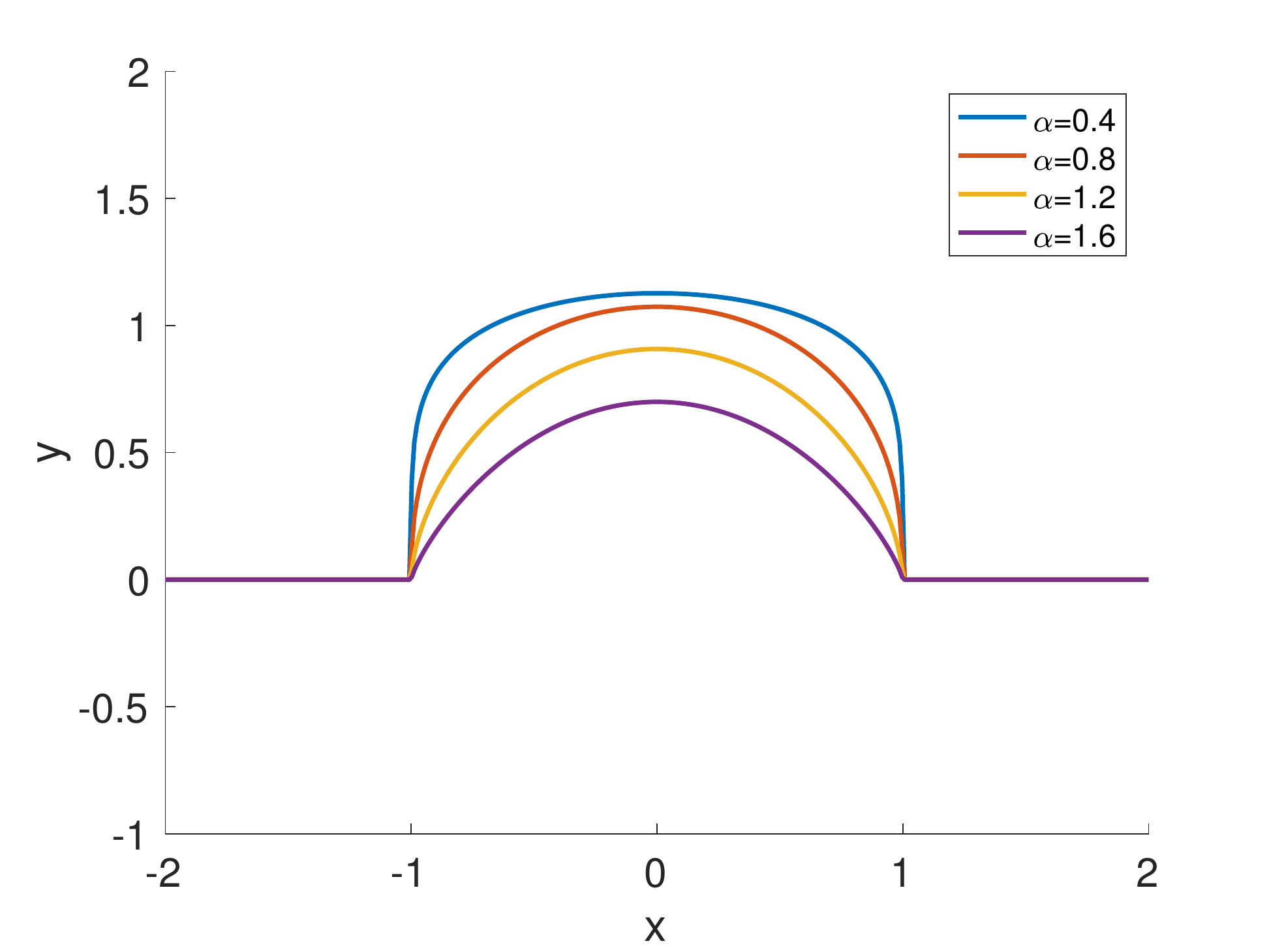}
\caption{Solutions to the fractional Poisson problem \cref{equ:model} with $f\equiv 1$, $x\in \Omega = [-1,1]$. We can clearly see the large slopes near $x=\pm 1$, indicating the discontinuity in the first order derivatives.}
\label{fig:rhsequalsone}
\end{figure}

This result indicates that $u$ behaves like $\delta^{\alpha/2}$ near the boundary. Besides, we can only expect H\"older continuity near the boundary and nothing better in general. In fact, this fractional behavior causes many of the current numerical methods to have reduced convergence rate for $C^{0,\alpha}$ or $C^{1,\alpha}$ solutions. \Cref{fig:rhsequalsone} shows the solutions to 1D fractional Poisson problem \cref{equ:model} with $f\equiv 1$, $x\in \Omega = [-1,1]$. We can clearly see the large slopes near $x=\pm 1$, indicating the discontinuity in the first order derivatives. These numerical methods try to fit the solution with polynomial basis functions. When the solution is smooth or has high order continuous derivatives, the approximation is appropriate. However, for the critical cases when the solution is only H\"older continuous, finer meshes or more terms are needed to achieve a preset accuracy. We illustrate the effect of reduced smoothness on the numerical solution in \cref{fig:fem}, where finite element methods are used to obtain the numerical solutions.

\begin{figure}[H]
\centering
\includegraphics[width=0.45\textwidth]{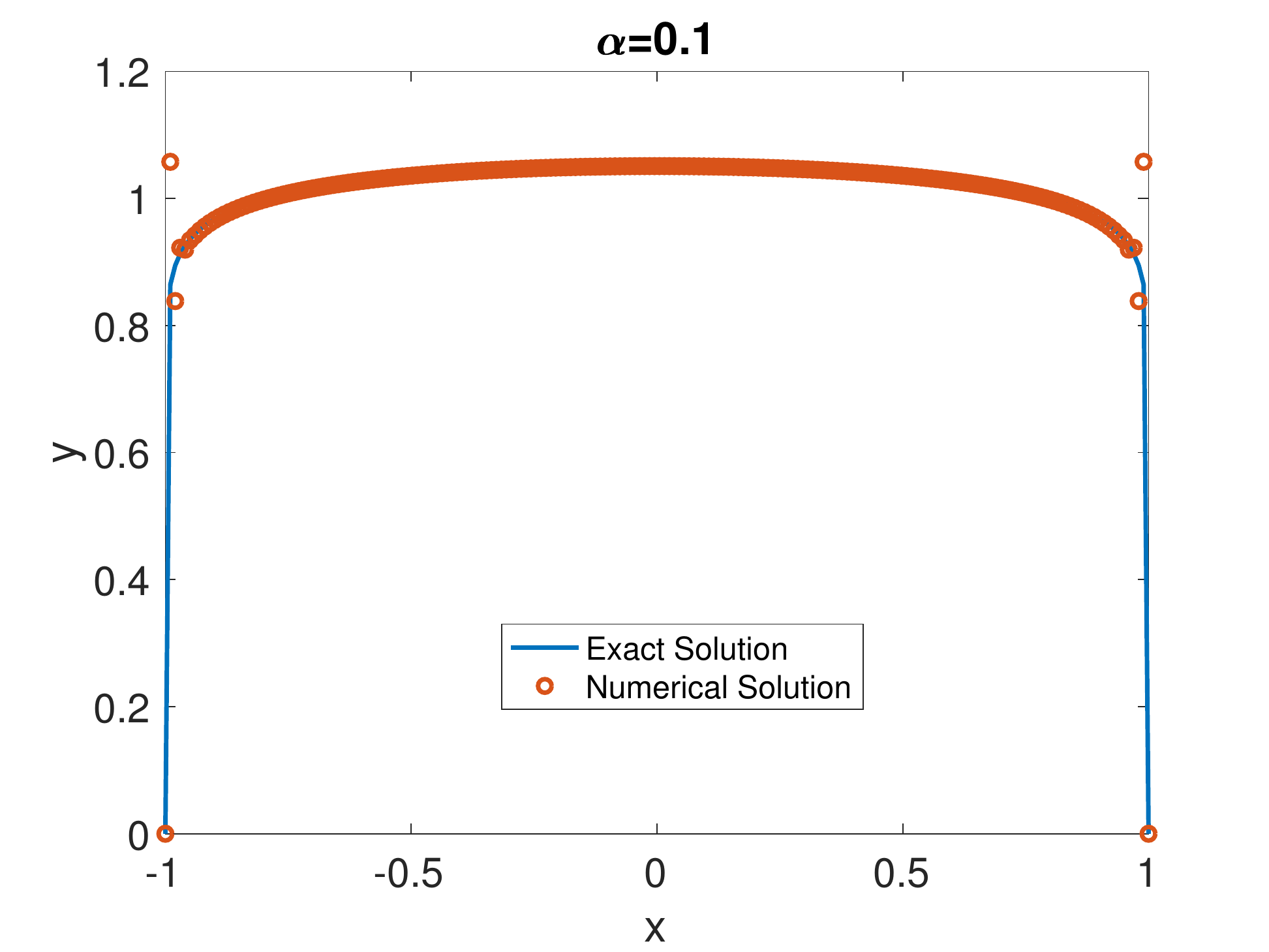}
\includegraphics[width=0.45\textwidth]{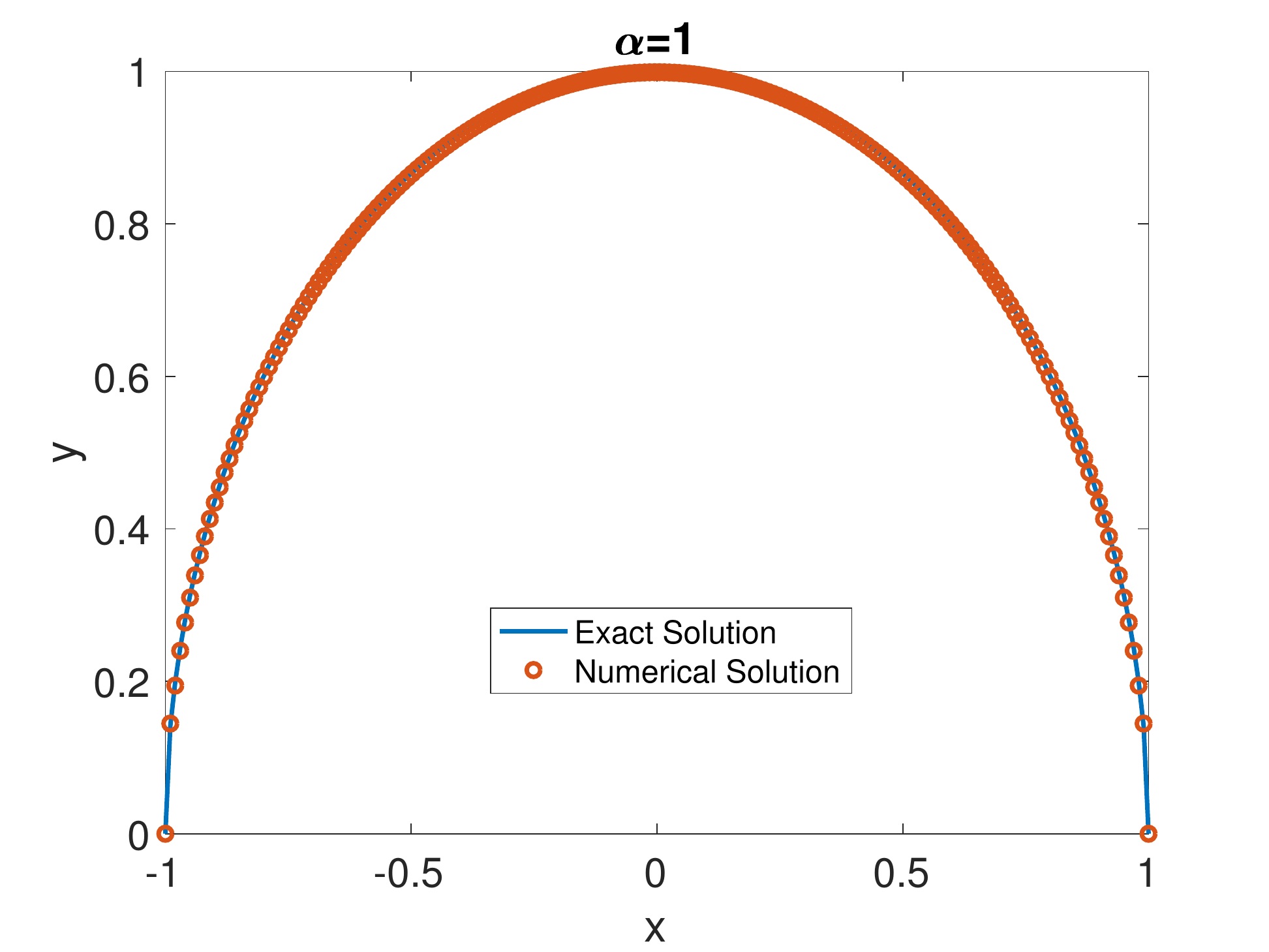}
\caption{Numerical results to \cref{equ:model} with $f\equiv 1$ using finite element methods. We use 200 uniform grid points on $[-1,1]$ and $\alpha=$0.1, 1 are used. We can clearly see that for small $\alpha$, where the solution is less smooth, there is severe deviation on the boundary for the numerical approximation.}
\label{fig:fem}
\end{figure}

As we have prior knowledge of how the solution behaves near the boundary, it is reasonable to build the asymptotic behavior into the representation of the solution. Fortunately, in the case of unit balls in 2D or 3D, we have explicit formulas for such representation, and therefore we can derive elegant and accurate spectral methods.

\subsection{Spectral Representation of the Solution}

Let $P_n^{(\alpha,\beta)}(z)$ be the Jacobi polynomial, e.g. 

\begin{equation}\label{equ:jacobip}
  P_n^{(\alpha ,\beta )}(z) = {{\Gamma (\alpha  + 1 + n)} \over {n!}}{}_2{\tilde F_1}\left( {\left. {\begin{matrix}
   { - n,1 + \alpha  + \beta  + n}  \cr 
   {\alpha  + 1}  \cr 

 \end{matrix} } \right|{{1 - z} \over 2}} \right)
\end{equation}
where ${}_p\tilde F_q$ is the regularized hypergeometric function~\cite{johansson2016computing}
\begin{equation}
    {}_p\tilde F_q\left( {\left. {\begin{array}{*{20}{c}}
{{a_1}, \ldots ,{a_p}}\\
{{b_1}, \ldots ,{b_q}}
\end{array}} \right|z} \right) = \sum_{k=0}^\infty \frac{(a_1)_k\ldots (a_p)_k    }{\Gamma(b_1+k)\ldots\Gamma(b_q+k)}\frac{z^k}{k!}, \quad (a)_k = \frac{\Gamma(a+k)}{\Gamma(a)}
\end{equation}

 Given $d\geq 1$ and $\alpha\in [0,2)$, we denote 
 \begin{equation}\label{equ:plmn}
    P_{l,m,n}(\bx) := V_{l,m}(\bx)P_n^{(\alpha /2,d/2 + l - 1)}(2|\bx{|^2} - 1)
\end{equation}
where $l,n\geq 0$ and $1\leq m \leq M_{d,l}$, and $V_{l,m}$ are \textit{solid harmonic polynomial}~\cite{dyda2017eigenvalues} in $\RR^d$ of degree $l\geq 0$, which is a homogeous polynomial of degree $l$ and harmonic, i.e., $\Delta V_{l,m}(\bx)=0$. We can also write $V_{l,m}=r^lY_l^m$~\cite{saborid2008coordinate}, where $Y_l^m$ is orthonormal with respect to the surface measure on the unit sphere, and is called \textit{spherical harmonics}~\cite{ferrers1877elementary}. Note that $P_{l,m,n}$ is a polynomial of variables $x_1,x_2,\ldots, x_d$. 
Here $M_{d,l}$ is given by
\begin{equation}\label{equ:mdl}
    M_{d,l} = \frac{d+2l-2}{d+l-2}\begin{pmatrix}
        d+l-2\\
        l
    \end{pmatrix}
\end{equation}
For example, we have
\begin{equation}
    M_{1,l} = 0, \, M_{2,l} = 2, \, M_{3,l} = 2l+1
\end{equation}

Finally we define 
\begin{equation}
    p_{l,m,n} = (1-|\bx|^2)^{\alpha/2}_+ P_{l,m,n}(\bx)
\end{equation}
We have the following theorem~\cite{dyda2017fractional}

\begin{theorem}\label{thm:main2}
    Assume that $\alpha>0, l, n\geq 0, 1\leq m\leq M_{d,l}$. Then 
    \begin{equation}
        (-\Delta)^{\alpha/2} p_{l,m,n}(\bx) = d^{\alpha,d}_{n,l}P_{l,m,n}(\bx),\quad \bx\in B_1^d(0)
    \end{equation}
    where
    \begin{equation}\label{equ:dnl}
        d^{\alpha,d}_{n,l} = {{{2^\alpha }\Gamma \left( {1 + {\alpha  \over 2} + n} \right)\Gamma \left( {{{\delta  + \alpha } \over 2} + n} \right)} \over {n!\Gamma \left( {{\delta  \over 2} + n} \right)}}, \quad \delta = d+2l
    \end{equation}
\end{theorem}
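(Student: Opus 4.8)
The plan is to use the rotational invariance of $(-\Delta)^{\alpha/2}$ to strip off the angular variables, reducing the claim to a single radial identity, and then to prove that identity on the Fourier side, where the functions involved become Bessel functions.

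First, since $(-\Delta)^{\alpha/2}$ acts as the Fourier multiplier $|\xi|^\alpha$ it commutes with rotations, and since $V_{l,m}(\bx)=|\bx|^l\,Y_l^m(\bx/|\bx|)$ is a solid harmonic of degree $l$, the Hecke--Bochner identity writes the Fourier transform of $V_{l,m}(\bx)\,\phi(|\bx|)$ as $(-i)^l V_{l,m}(\xi)$ times a Hankel transform of order $\nu:=d/2+l-1$ applied to $\phi$, and the inverse transform has the same shape. Applying this once to $\widehat{p_{l,m,n}}$ and once more to invert $|\xi|^\alpha\,\widehat{p_{l,m,n}}$, the prefactor $V_{l,m}$ is carried through untouched, so it remains only to prove the one-dimensional statement that, with $\phi_n(r):=(1-r^2)_+^{\alpha/2}P_n^{(\alpha/2,\nu)}(2r^2-1)$, the order-$\nu$ Hankel transform of $\rho\mapsto\rho^\alpha\,\mathcal{H}_\nu[\phi_n](\rho)$ equals $d^{\alpha,d}_{n,l}\,P_n^{(\alpha/2,\nu)}(2r^2-1)$ for $0<r<1$. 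Only the open ball enters here: outside it $(-\Delta)^{\alpha/2}p_{l,m,n}$ is a genuinely different, non-polynomial function, and the radial integral below develops a singularity at $r=1$, which is consistent with \cref{thm:v-is-Calpha}.

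For the forward transform, $\mathcal{H}_\nu[\phi_n]$ is a classical Jacobi--Bessel integral: expanding $P_n^{(\alpha/2,\nu)}(2r^2-1)$ in powers of $(1-r^2)$ and integrating term by term with Sonine's formula $\int_0^1(1-r^2)^{\sigma}r^{\nu+1}J_\nu(\rho r)\,dr=2^{\sigma}\Gamma(\sigma+1)\rho^{-\sigma-1}J_{\nu+\sigma+1}(\rho)$, the finite sum collapses by Chu--Vandermonde to the single term $c_1\,\rho^{-\alpha/2-1}J_{\nu+\alpha/2+2n+1}(\rho)$ with an explicit $c_1=c_1(\alpha,n)$ --- these $\phi_n$ being, up to normalization, the radial profiles of the generalized Zernike polynomials. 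Multiplying by $\rho^\alpha$ and applying $\mathcal{H}_\nu$ once more turns the inversion into the Weber--Schafheitlin integral $c_2\int_0^\infty\rho^{\alpha/2}J_{\nu+\alpha/2+2n+1}(\rho)\,J_\nu(r\rho)\,d\rho$, which converges (conditionally at infinity, precisely because $\alpha<2$) and, for $r<1$, evaluates to $r^{\nu}$ times a \emph{terminating} ${}_2F_1(-n,\,n+\nu+\alpha/2+1;\,\nu+1;\,r^2)$; the $r^{\nu}$ cancels the $r^{-\nu}$ coming from the Hecke--Bochner inversion, and this ${}_2F_1$ is exactly $(-1)^n P_n^{(\alpha/2,\nu)}(2r^2-1)$ up to a Gamma-quotient. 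Multiplying $c_1$, $c_2$, the two Hecke--Bochner normalizations, and the Gamma factors from Weber--Schafheitlin then produces precisely the constant $d^{\alpha,d}_{n,l}$ of \eqref{equ:dnl}.

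The one delicate point is the inversion: the Weber--Schafheitlin integral is only conditionally convergent in the range we need, so to legitimately identify the Fourier-multiplier computation with the pointwise principal-value formula \eqref{eq:fl} one should regularize (insert $e^{-\varepsilon\rho}$ and let $\varepsilon\downarrow0$) or work in the space of tempered distributions --- which is fine since $p_{l,m,n}$ has compact support and $\alpha<2$ --- and then observe that on the open ball the resulting distribution is continuous and coincides with the absolutely convergent P.V.\ integral. Apart from that, the work is bookkeeping, and essentially all of the effort goes into tracking the Gamma constants so as to reach \eqref{equ:dnl} rather than merely ``a constant times $P_{l,m,n}$''. A Bessel-free alternative is to invoke Dyda's closed form for $(-\Delta)^{\alpha/2}[(1-|\bx|^2)_+^{p}V_{l,m}(\bx)]$ as $V_{l,m}(\bx)$ times a ${}_2F_1(\,\cdot\,;|\bx|^2)$, expand $\phi_n(|\bx|)$ into a sum of $(1-|\bx|^2)_+^{\alpha/2+j}$, apply that formula term by term (each ${}_2F_1$ now terminating), and resum the resulting double series by a Saalsch\"utzian identity.
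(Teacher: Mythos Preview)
The paper does not actually prove this theorem: its ``proof'' is a bare citation to \cite{dyda2017fractional} (with \cite{acosta2018regularity} mentioned for $d=1$). Your proposal therefore goes further than the paper itself, and the route you outline---Hecke--Bochner to strip off the solid harmonic, the Sonine/Zernike identity collapsing the radial Hankel transform to a single shifted Bessel function, then Weber--Schafheitlin to invert $\rho^{\alpha}$ times that Bessel function back to a terminating ${}_2F_1$---is a correct and classical argument. You have also correctly isolated the one genuinely delicate step, namely the conditional convergence of the Weber--Schafheitlin integral and the need for a regularization (or distributional) argument to identify it with the principal-value definition on the open ball.

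For comparison, the proof in the cited reference \cite{dyda2017fractional} is organized around Mellin transforms and Meijer $G$-functions: one computes $(-\Delta)^{\alpha/2}$ of a general Meijer $G$-function on the ball and then specializes to recover the Jacobi case. That approach works entirely with absolutely convergent integrals and yields a larger family of identities at once, at the price of heavier special-function machinery. Your Bessel approach is more elementary and makes it transparent why a \emph{polynomial} appears for $|\bx|<1$ (the ${}_2F_1$ terminates precisely because the Bessel index is shifted by $2n$). Your ``Bessel-free alternative''---expand $\phi_n$ in powers of $(1-|\bx|^2)$, apply Dyda's scalar formula term by term, and resum via a Saalsch\"utzian identity---is also valid and is closer in spirit to the hypergeometric manipulations in \cite{dyda2017fractional}.
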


\begin{proof}
    See \cite{dyda2017fractional}. For an alternative proof when $d=1$, see \cite{acosta2018regularity}.
\end{proof}

This indicates that $P_{l,m,n}(\bx)$ is an eigenfunction with respect to the fractional Laplacian operator and weight function $w(\bx) = (1-|\bx|^2)_+^{\alpha/2}$, i.e., the operator $f\rightarrow (-\Delta)^{\frac\alpha2}(wf)$. The eigenvalues are exactly $d^{\alpha,d}_{n,l}$. To investigate the approximation property of $P_{l,m,n}(\bx)$, we need to introduce a sequence of the space of orthogonal polynomials of degree $n$, i.e., 
\begin{equation}
    \CV_n^d := \{P \in \Pi_n^d: \langle P, Q\rangle = 0, \quad\forall Q\in \Pi_{n-1}^d \}
\end{equation}
where $\Pi_n^d$ is the set of all polynomials in $\RR^d$ with degree at most $n$. Here $\langle\cdot, \cdot\rangle$ denotes a weighted inner product in $B_1^d(0)$; we denote the corresponding Hilbert space by $L^2(w)$, with weight function $w$, i.e.
\begin{equation}
    \int_{B_1^d(0)} p(\bx)q(\bx)w(\bx)d\bx = 0, \quad\forall p\in \CV_m^d, \ q\in\CV_n^d,\ m\neq n
\end{equation}

A notable result is that for every $n$, $P_{l,m,n}(\bx)$ forms a complete orthogonal system for $L^2(w)$,  $w=(1-|\bx|^2)^{\alpha/2}$. This is stated below~\cite{dunkl2014orthogonal}:
\begin{theorem}
    For $x , y \in \RR^d$, let $( x,  y ) : = x_1 y _1 + \cdots + x_d y_d$ and 
$|x|:= \sqrt{  (x , x )}$. On the unit ball $B^d_1(0) = \{x \in \RR^d: |x| \le 1\}$, 
consider 
\begin{equation}\label{ball-weight}
  W_\alpha(x,y) :=  \frac {\Gamma(\frac\alpha2 + \frac{d} {2} + 1)} {\pi^{d/2}
 \Gamma(\frac\alpha2+1)} (1-|\bx|^2)^{\frac\alpha2}, \quad \alpha>0,
\end{equation} 
normalized so that its integral over $B_1^d(0)$ is 1.

Let $\{Y_{m, l}: 1\le m 
\le M_{d,l}\}$, where $M_{d,l} = \dim \CH_{k}^d$ given by \cref{equ:mdl}, be an orthonormal 
basis of $\CH_{l}^d$, the space of spherical harmonics of degree $l$, with respect to 
the normalized surface measure. Define
\begin{equation}\label{ball-base2}
P_{l,m,n}(\bx) := [h_{l,n}]^{-1}
     P_{n}^{(\frac\alpha2, l + \frac{d-2}2)}(2|\bx|^2-1) V_{l,m}(\bx)
\end{equation}
where 
$$
[h_{l,n}]^2 = \frac{(\frac\alpha2+1)_{n} (\frac{d}{2})_{l+n} (l+n+\frac\alpha2+ \frac{d}{2})}
    { n ! (\frac\alpha2+\frac{d}{2}+1)_{l+n} (l+2n+\frac\alpha2+ \frac{d}{2})}
$$
Then for every $n$, $\{P_{l,m,n}:  0\leq l , 1 \le m \le M_{d,l}\}$ is an orthonormal basis 
of $L^2(w)$.
\end{theorem}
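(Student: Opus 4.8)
The plan is to check the three defining properties of an orthonormal basis: that every $P_{l,m,n}$ lies in $L^2(w)$ and is a polynomial, that the family is orthonormal, and that it is complete. Membership and polynomiality are immediate, since $V_{l,m}$ is a homogeneous polynomial of degree $l$ and $P_n^{(\alpha/2,\,l+(d-2)/2)}(2|\bx|^2-1)$ is a polynomial of degree $2n$ in $\bx$, so $P_{l,m,n}$ is a polynomial of degree $l+2n$, and $w=(1-|\bx|^2)^{\alpha/2}_+$ is bounded on $B_1^d(0)$ so every polynomial lies in $L^2(w)$. (I would read the statement as asserting that the full family $\{P_{l,m,n}:l,n\ge0,\ 1\le m\le M_{d,l}\}$ is an orthonormal basis of $L^2(w)$ and, refining this degree by degree, that for each $N$ the subfamily with $l+2n=N$ is an orthonormal basis of the space $\CV_N^d$ of degree-$N$ orthogonal polynomials.)

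Orthonormality I would obtain by separation of variables. Writing $\bx=r\xi$ with $r\in[0,1]$, $\xi\in S^{d-1}$, and using $V_{l,m}(\bx)=r^lY_l^m(\xi)$, the integral $\int_{B_1^d(0)}P_{l,m,n}(\bx)P_{l',m',n'}(\bx)\,w(\bx)\,d\bx$ splits, up to the constants $[h_{l,n}]^{-1}[h_{l',n'}]^{-1}$ and the normalizing prefactor of $W_\alpha$, into an angular factor $\int_{S^{d-1}}Y_l^m(\xi)Y_{l'}^{m'}(\xi)\,d\sigma(\xi)$ and a radial factor $\int_0^1 P_n^{(\alpha/2,\,l+(d-2)/2)}(2r^2-1)\,P_{n'}^{(\alpha/2,\,l'+(d-2)/2)}(2r^2-1)\,(1-r^2)^{\alpha/2}r^{l+l'+d-1}\,dr$. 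Orthonormality of the spherical harmonics with respect to the normalized surface measure collapses the angular factor to $\delta_{l l'}\delta_{m m'}$, so only the case $l=l'$ survives; there the substitution $t=2r^2-1$ turns the radial factor into a constant times $\int_{-1}^1 P_n^{(\alpha/2,\,l+(d-2)/2)}(t)\,P_{n'}^{(\alpha/2,\,l+(d-2)/2)}(t)\,(1-t)^{\alpha/2}(1+t)^{l+(d-2)/2}\,dt$, which is exactly the Jacobi orthogonality integral for the parameter pair $(\alpha/2,\,l+(d-2)/2)$. It therefore vanishes unless $n=n'$, and on the diagonal it equals the classical Jacobi $L^2$-norm; multiplying by the beta-function factor produced by the change of variables and by the $W_\alpha$ prefactor leaves a product of $\Gamma$-functions which, by the very definition of $[h_{l,n}]^2$, equals $[h_{l,n}]^2$, so the diagonal entries are $1$.

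For completeness, since all the $P_{l,m,n}$ are polynomials and polynomials are dense in $L^2(w)$ --- $\bar B_1^d(0)$ is compact and $w$ is bounded, so $C(\bar B_1^d(0))$ is dense in $L^2(w)$, and by Stone--Weierstrass polynomials are uniformly dense in $C(\bar B_1^d(0))$ --- it suffices to show the family spans every polynomial. I would do this degree by degree: $P_{l,m,n}$ has degree $l+2n$, so the subfamily with $l+2n=N$ has cardinality $\sum_{0\le j\le N/2}M_{d,N-2j}=\sum_{0\le j\le N/2}\dim\CH_{N-2j}^d$, which equals $\dim\mathcal{P}_N$ by the harmonic (Fischer) decomposition $\mathcal{P}_N=\bigoplus_{0\le j\le N/2}|\bx|^{2j}\CH_{N-2j}^d$. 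Being orthonormal, these polynomials are linearly independent; since $\dim\CV_N^d=\dim\mathcal{P}_N$ (as $\CV_N^d$ is the orthogonal complement of $\Pi_{N-1}^d$ in $\Pi_N^d$), they form a basis of $\CV_N^d$. Letting $N$ range, the full family spans $\bigcup_N\CV_N^d=\bigcup_N\Pi_N^d$, i.e. all polynomials, and together with the density statement this shows it is an orthonormal basis of $L^2(w)$.

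The one step requiring genuine care is the constant bookkeeping in the radial integral: matching the Pochhammer symbols appearing in $[h_{l,n}]^2$ against the assembled $\Gamma$-factors coming from the change of variables $t=2r^2-1$, from the standard normalization of $\int_{-1}^1(1-t)^a(1+t)^b[P_n^{(a,b)}(t)]^2\,dt$, and from the normalizing prefactor of $W_\alpha$, while keeping the two conventions for the surface measure (normalized versus $(d-1)$-dimensional) consistent throughout and matching the weight $w$ used for $L^2(w)$ with the normalized weight $W_\alpha$ for which the $[h_{l,n}]$ are calibrated. Everything else is a routine invocation of the orthogonality of spherical harmonics, the orthogonality of Jacobi polynomials, and the harmonic decomposition of homogeneous polynomials.
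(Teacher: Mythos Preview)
Your argument is correct and is precisely the standard proof one finds in the reference the paper cites: separation into an angular factor (orthonormality of spherical harmonics) and a radial factor (Jacobi orthogonality after the substitution $t=2r^2-1$), together with the Fischer decomposition and a dimension count for completeness. The paper itself does not give a proof of this statement at all---its ``proof'' consists solely of a pointer to Dunkl--Xu, \emph{Orthogonal Polynomials of Several Variables}---so you have supplied exactly what the paper defers to the literature, and your cautionary remark about tracking the normalization conventions (surface measure, $W_\alpha$ versus $w$, the Jacobi norm) is well placed, since that bookkeeping is the only place an error can hide.
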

\begin{proof}
    See \cite{dunkl2014orthogonal} and its references.
\end{proof}
This enables us to develop a very accurate and efficient spectral method for the fractional Poisson problem.

\section{Spectral Method for the Fractional Poisson Problem }
In this section, we present the accurate and efficient spectral method for the fractional Poisson problem. We established the convergence result with proper assumptions on the right hand side.

Given \cref{thm:main2}, the spectral method for the fractional Poisson problem proceeds as follows:

\begin{itemize}
    \item From the expansion for the right hand side $f$
\begin{equation}
    f(\bx) = \sum\limits_{l = 0}^{ + \infty } {\sum\limits_{n = 0}^{ + \infty } {\sum\limits_{m = 1}^{{M_{d,l}}} {\alpha _{n,l}^{\alpha ,d}d_{n,l}^{\alpha ,d}{P_{l,m,n}}({\bf{x}})} } } 
\end{equation}
approximate $f$ with spectral accuracy: 
\begin{equation}
    f(\bx) \approx f_{NL}(\bx) = \sum\limits_{l = 0}^L {\sum\limits_{n = 0}^N {\sum\limits_{m = 1}^{{M_{d,l}}} {\alpha _{n,l}^{\alpha ,d}d_{n,l}^{\alpha ,d}{P_{l,m,n}}({\bf{x}})} } } 
\end{equation}
We control the approximation error with $N$, $L\geq 0$. Intuitively, $N$ controls the error along the radial direction while $L$ controls the error on the surface of spheres.

The approximation can be numerically done by projecting $f(\bx)$ onto the space spanned by $\{P_{l,m,n}\}$ using numerical quadratures
\begin{equation}\label{equ:alpha}
    \alpha _{n,l}^{\alpha ,d} = \frac{{\int_{{B_1}(0)} {f({\bf{x}}){P_{l,m,n}}({\bf{x}}){{(1 - |{\bf{x}}{|^2})}^{\frac{\alpha }{2}}}d{\bf{x}}} }}{{\int_{{B_1}(0)} {{P_{l,m,n}}{{({\bf{x}})}^2}{{(1 - |{\bf{x}}{|^2})}^{\frac{\alpha }{2}}}d{\bf{x}}} }}
\end{equation}

\item The solution is approximated as 
\begin{equation}
    {u_{NL}}({\bf{x}}) = \sum\limits_{l = 0}^L {\sum\limits_{n = 0}^N {\sum\limits_{m = 1}^{{M_{d,l}}} {\alpha _{n,l}^{\alpha ,d}(1 - |{\bf{x}}{|^2})_ + ^{\alpha /2}{p_{l,m,n}}({\bf{x}})} } } 
\end{equation}

\end{itemize}

The algorithm requires the computation of the integrals in the numerators and denominators in \cref{equ:alpha}. In \Cref{sect:quad}, we propose an efficient algorithm for this computation. The cost of the computation is $\mathcal{O}(NLM_{d,L})$. In \cref{sect:2d} and \cref{sect:3d} we give explicit formulas of \cref{equ:alpha} for 2D and 3D cases, which will be most useful for practical use. 

In the following discussion, we use $Lf(\bx)$ to denote the  exact solution of \cref{equ:model} and $L_{NL}f(\bx)$ to denote the numerical solution obtained by the procedure described above. We immediately obtain a regularity result for \cref{equ:alpha}:
\begin{lemma}\label{lemma:l0}
    Assume $f\in L^2(w)$, then the solution to \cref{equ:model} satisfies $Lf\in L^2(\Omega)$ and
    \begin{equation}
        \|Lf\|_{L^2(\Omega)}\lesssim \|f\|_{L^2(w)}
    \end{equation}
\end{lemma}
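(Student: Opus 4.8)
The plan is to use the spectral decomposition provided by Theorem~\ref{thm:main2} together with the orthogonality of the family $\{P_{l,m,n}\}$ in $L^2(w)$. First I would expand $f\in L^2(w)$ in the orthonormal basis, writing $f = \sum_{l,n,m} c_{n,l,m}\, P_{l,m,n}$ (after rescaling the $P_{l,m,n}$ of the penultimate theorem to be orthonormal), so that Parseval gives $\|f\|_{L^2(w)}^2 = \sum_{l,n,m} |c_{n,l,m}|^2$. By Theorem~\ref{thm:main2}, the corresponding solution of \cref{equ:model} is $Lf = \sum_{l,n,m} \frac{c_{n,l,m}}{d^{\alpha,d}_{n,l}}\, p_{l,m,n}$, since $(-\Delta)^{\alpha/2}p_{l,m,n} = d^{\alpha,d}_{n,l}P_{l,m,n}$ and $p_{l,m,n}$ vanishes outside $B_1^d(0)$.

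The key quantitative step is then to bound $\|p_{l,m,n}\|_{L^2(\Omega)}$ relative to $\|P_{l,m,n}\|_{L^2(w)}$ and to control the factor $1/d^{\alpha,d}_{n,l}$. I would argue as follows. Since $w(\bx) = (1-|\bx|^2)^{\alpha/2}_+ \le 1$ on $B_1^d(0)$, we have $p_{l,m,n}(\bx)^2 = w(\bx)^2 P_{l,m,n}(\bx)^2 \le w(\bx)\,P_{l,m,n}(\bx)^2$, hence $\|p_{l,m,n}\|_{L^2(\Omega)} \le \|P_{l,m,n}\|_{L^2(w)}$; with the normalization that makes the $P_{l,m,n}$ orthonormal, the right-hand side is $1$. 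Next, from the explicit formula \cref{equ:dnl} one checks that $d^{\alpha,d}_{n,l}$ is bounded below by a positive constant depending only on $\alpha$ and $d$ (indeed its smallest value is $d^{\alpha,d}_{0,0} = 2^\alpha \Gamma(1+\tfrac\alpha2)\Gamma(\tfrac{d+\alpha}{2})/\Gamma(\tfrac d2) > 0$, and the ratio is increasing in $n$ and in $l$ by Gamma-function monotonicity), so $1/d^{\alpha,d}_{n,l} \le 1/d^{\alpha,d}_{0,0} =: C_{\alpha,d}$. Combining, and using that the $p_{l,m,n}$ have mutually disjoint... no — rather, using that $\{p_{l,m,n}\}$ need not be orthogonal in $L^2(\Omega)$, I would instead invoke the triangle inequality together with Cauchy–Schwarz only after noting that $p_{l,m,n} = w\,P_{l,m,n}$, so that $\|Lf\|_{L^2(\Omega)}^2 = \int_\Omega w(\bx)^2 \bigl(\sum_{l,n,m} \tfrac{c_{n,l,m}}{d^{\alpha,d}_{n,l}} P_{l,m,n}(\bx)\bigr)^2 d\bx \le \int_\Omega w(\bx)\bigl(\sum \tfrac{c_{n,l,m}}{d^{\alpha,d}_{n,l}} P_{l,m,n}\bigr)^2 d\bx = \sum_{l,n,m} \tfrac{|c_{n,l,m}|^2}{(d^{\alpha,d}_{n,l})^2} \le C_{\alpha,d}^2 \sum_{l,n,m} |c_{n,l,m}|^2 = C_{\alpha,d}^2 \|f\|_{L^2(w)}^2$, where the middle equality is Parseval for the $L^2(w)$-orthonormal basis applied to the function $\sum \tfrac{c_{n,l,m}}{d^{\alpha,d}_{n,l}} P_{l,m,n}$, which lies in $L^2(w)$ precisely because its coefficients are square-summable. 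Taking square roots yields $\|Lf\|_{L^2(\Omega)} \le C_{\alpha,d}\|f\|_{L^2(w)}$, i.e.\ $\|Lf\|_{L^2(\Omega)} \lesssim \|f\|_{L^2(w)}$.

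The main obstacle, and the point needing most care, is the lower bound on the eigenvalues $d^{\alpha,d}_{n,l}$: one must verify that $\inf_{n,l\ge 0} d^{\alpha,d}_{n,l} > 0$ uniformly, which amounts to a monotonicity analysis of the ratio of Gamma functions in \cref{equ:dnl} in both indices $n$ and $l$ (equivalently in $\delta = d+2l$). A secondary subtlety is the legitimacy of identifying $Lf$ with the series $\sum \tfrac{c_{n,l,m}}{d^{\alpha,d}_{n,l}} p_{l,m,n}$ — i.e.\ that the fractional-Laplacian solution operator really inverts the eigenvalue relation termwise — which I would justify by noting that the partial sums solve \cref{equ:model} with right-hand side the partial sums of $f$, that these converge in $L^2(w)\subset L^2(\Omega)$, and then appealing to uniqueness/continuity of the solution map for \cref{equ:model} (or simply defining $L$ on $L^2(w)$ this way, which is consistent since the $P_{l,m,n}$ are complete). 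Everything else is a routine application of Parseval and the pointwise bound $w \le 1$.
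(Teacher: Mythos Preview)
Your proposal is correct and follows essentially the same route as the paper: expand $f$ in the $L^2(w)$-orthogonal basis $\{P_{l,m,n}\}$, write $Lf$ as the corresponding series in $\{p_{l,m,n}\}$, use $w\le 1$ to pass from the unweighted $L^2(\Omega)$ norm to the $L^2(w)$ norm, and then invoke a uniform positive lower bound on the eigenvalues $d^{\alpha,d}_{n,l}$. Your treatment is in fact slightly tidier than the paper's at one point---you apply the pointwise inequality $w^2\le w$ to the full sum and then use Parseval in $L^2(w)$, whereas the paper writes $\|Lf\|_{L^2(\Omega)}^2$ directly as a diagonal sum of $\int_\Omega w^2 P_{l,m,n}^2$, tacitly using an orthogonality of the $p_{l,m,n}$ in unweighted $L^2(\Omega)$ that is not established; your version avoids this issue.
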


\begin{proof}
    As $f\in L^2(w)$, we can write $f$ as an expansion
    \begin{equation}
    f(\bx) = \sum\limits_{l = 0}^{ + \infty } {\sum\limits_{n = 0}^{ + \infty } {\sum\limits_{m = 1}^{{M_{d,l}}} {\alpha _{n,l}^{\alpha ,d}d_{n,l}^{\alpha ,d}{P_{l,m,n}}({\bf{x}})} } } 
\end{equation}
and therefore
\begin{equation}\label{equ:p1}
    \|f\|_{L^2(w)}^2=\sum\limits_{l = 0}^{ + \infty } {\sum\limits_{n = 0}^{ + \infty } {\sum\limits_{m = 1}^{{M_{d,l}}} {{{\left( {\alpha _{n,l}^{\alpha ,d}d_{n,l}^{\alpha ,d}} \right)}^2}\mathop 
    \int \nolimits_\Omega ^{} {P_{l,m,n}}{{({\bf{x}})}^2}{{(1 - |{\bf{x}}{|^2})}^{\alpha /2}}d{\bf{x}}} } } 
\end{equation}
The solution is given by 
\begin{equation}
Lf(\bx) = \sum\limits_{l = 0}^{ + \infty } {\sum\limits_{n = 0}^{ + \infty } {\sum\limits_{m = 1}^{{M_{d,l}}} {\alpha _{n,l}^{\alpha ,d}(1 - |{\bf{x}}{|^2})_ + ^{\alpha /2}{P_{l,m,n}}({\bf{x}})} } } 
\end{equation}
and
\begin{multline}\label{equ:p2}
    \|Lf(\bx)\|_{L^2(\Omega)}=\sum\limits_{l = 0}^{ + \infty } {\sum\limits_{n = 0}^{ + \infty } {\sum\limits_{m = 1}^{{M_{d,l}}} {{{\left( {\alpha _{n,l}^{\alpha ,d}} \right)}^2}\int_\Omega ^{} {(1 - |{\bf{x}}{|^2})_ + ^\alpha {P_{l,m,n}}{{({\bf{x}})}^2}d{\bf{x}}} } } } \\ \le \sum\limits_{l = 0}^{ + \infty } {\sum\limits_{n = 0}^{ + \infty } {\sum\limits_{m = 1}^{{M_{d,l}}} {{{\left( {\alpha _{n,l}^{\alpha ,d}} \right)}^2}\int_\Omega ^{} {(1 - |{\bf{x}}{|^2})_ + ^{\alpha /2}{P_{l,m,n}}{{({\bf{x}})}^2}d{\bf{x}}} } } } 
\end{multline}

Compare \cref{equ:p1,equ:p2} we conclude
\begin{equation}
C    \|Lf(\bx)\|_{L^2(\Omega)}\leq \|f\|_{L^2(w)}
\end{equation}
where $C$ can be picked as
\begin{equation}
    C = \min_{l,n} d^{\alpha,d}_{n,l}
\end{equation}

Since  $d^{\alpha,d}_{n,l}>0$ and
\begin{equation}
    {{\Gamma \left( {1 + {\alpha  \over 2} + n} \right)\Gamma \left( {{{\delta  + \alpha } \over 2} + n} \right)} \over {n!\Gamma \left( {{\delta  \over 2} + n} \right)}} > 1
\end{equation}
we deduce $C=\min\limits_{l,n} d^{\alpha,d}_{n,l}\geq 2^\alpha>0$.
\end{proof}

In what follows, we will give explicit formulas in 2D and 3D for the convenience of computation and analysis . 
\subsection{Explicit Formulas in 2D}\label{sect:2d}


If we have an expansion for $u(r,\theta)$, 
\begin{equation}\label{equ:uexpansion}
    u(r, \theta) = (1-r^2)^{\frac{\alpha}{2}} \sum_{n=0}^\infty \sum_{l=0}^\infty (c_{l0}^n \cos(l\theta) + c_{l1}^n \sin(l\theta)) P_n^{(\frac{\alpha}{2},l)}(2r^2-1) r^l
\end{equation}
then the fractional Laplacian can be computed directly from
\begin{equation}\label{equ:fexapnsion}
    f(r, \theta) = (-\Delta)^{\frac{\alpha}{2}} u(r, \theta) = \sum_{n=0}^\infty \sum_{l=0}^\infty d_{n,l}^{\alpha,2} (c_{l0}^n \cos(l\theta) + c_{l1}^n \sin(l\theta)) P_n^{(\frac{\alpha}{2},l)}(2r^2-1) r^l
\end{equation}

The coefficients in \cref{equ:uexpansion} can be computed by 
\begin{equation}\label{equ:utof}
    \left\{
  \begin{aligned}
  c_{l0}^n &= {{\int_0^{2\pi } d \theta \int_0^1 u (r,\theta )\cos (l\theta )P_n^{\left( {{\alpha  \over 2},l} \right)}(2{r^2} - 1){r^{l + 1}}dr} \over {(1+\delta_l) \pi \int_0^1 {P_n^{\left( {{\alpha  \over 2},l} \right)}} {{(2{r^2} - 1)}^2}{r^{2l + 1}}{{(1 - {r^2})}^{{\alpha  \over 2}}}dr}}, \quad l\geq 0\\
  c_{l1}^n &= {{\int_0^{2\pi } d \theta \int_0^1 u (r,\theta )\sin (l\theta )P_n^{\left( {{\alpha  \over 2},l} \right)}(2{r^2} - 1){r^{l + 1}}dr} \over {\pi \int_0^1 {P_n^{\left( {{\alpha  \over 2},l} \right)}} {{(2{r^2} - 1)}^2}{r^{2l + 1}}{{(1 - {r^2})}^{{\alpha  \over 2}}}dr}},l \ge 1\\
  c_{01}^n &= 0
\end{aligned}\right.
\end{equation}
where $\delta_l = \left\{\begin{matrix}
    0 & l\neq 0 \\
    1 & l = 0
\end{matrix} \right.$

Given $f(r,\theta)$, the coefficients in \cref{equ:uexpansion} can be computed by

\begin{equation}\label{equ:ftou}
\left\{
  \begin{aligned}
  c_{l0}^n &= {{\int_0^{2\pi } d \theta \int_0^1 f (r,\theta )\cos (l\theta )P_n^{\left( {{\alpha  \over 2},l} \right)}(2{r^2} - 1){r^{l + 1}}{{(1 - {r^2})}^{{\alpha  \over 2}}}dr} \over {(1+\delta_l)\pi d_{n,l}^{\alpha ,2}\int_0^1 {P_n^{\left( {{\alpha  \over 2},l} \right)}} {{(2{r^2} - 1)}^2}{r^{2l + 1}}{{(1 - {r^2})}^{{\alpha  \over 2}}}dr}}, \quad l\geq 0\\
  c_{l1}^n &= {{\int_0^{2\pi } d \theta \int_0^1 f (r,\theta )\sin (l\theta )P_n^{\left( {{\alpha  \over 2},l} \right)}(2{r^2} - 1){r^{l + 1}}{{(1 - {r^2})}^{{\alpha  \over 2}}}dr} \over {\pi d_{n,l}^{\alpha ,2}\int_0^1 {P_n^{\left( {{\alpha  \over 2},l} \right)}} {{(2{r^2} - 1)}^2}{r^{2l + 1}}{{(1 - {r^2})}^{{\alpha  \over 2}}}dr}},l \ge 1\\
  c_{01}^n &= 0
\end{aligned}\right.
\end{equation}
Then applying \cref{equ:uexpansion} we are able to compute $u(r,\theta)$.

When $u(r,\theta)$ or $f(r,\theta)$ is a radial function, $f(r,\theta)$ or $u(r,\theta)$ is also radial. In addition, we see that $c_{lk}^n = 0, \forall l\geq 1, n\geq 0,  k=0,1$, thus \cref{equ:fexapnsion,equ:ftou,equ:uexpansion,equ:utof} are greatly simplified:
\begin{align}
    u(r) &= {(1 - {r^2})^{{\alpha  \over 2}}}\mathop \sum \limits_{n = 0}^\infty  c_{00}^nP_n^{\left( {{\alpha  \over 2},0} \right)}(2{r^2} - 1)\label{equ:uc} \\
    f(r)& = \mathop \sum \limits_{n = 0}^\infty  d_{n,l}^{\alpha ,2}c_{00}^nP_n^{\left( {{\alpha  \over 2},0} \right)}(2{r^2} - 1)\label{equ:fc}
\end{align}
and the corresponding coefficients are
\begin{equation}\label{equ:c00}
    c_{00}^n = {{\int_0^1 {f(r)rP_n^{\left( {{\alpha  \over 2},l} \right)}(2{r^2} - 1)} {{(1 - {r^2})}^{{\alpha  \over 2}}}dr} \over {d_{n,0}^{\alpha ,2}\int_0^1 {rP_n^{\left( {{\alpha  \over 2},l} \right)}{{(2{r^2} - 1)}^2}{{(1 - {r^2})}^{{\alpha  \over 2}}}dr} }} = {{\int_0^1 {u(r)rP_n^{\left( {{\alpha  \over 2},l} \right)}(2{r^2} - 1)} dr} \over {\int_0^1 {rP_n^{\left( {{\alpha  \over 2},l} \right)}{{(2{r^2} - 1)}^2}{{(1 - {r^2})}^{{\alpha  \over 2}}}dr} }}
\end{equation}

\subsection{Explicit Formulas in 3D}\label{sect:3d}


In this case, we have $M_{3,l}=2l+1$. For convenience, we let the index $m$ range be $-l,-l+1,\ldots,l-1,l$. Let $Y_m^l$ be the spherical harmonics
\begin{equation}
    Y^m_l(\theta,\phi) ={( - 1)^m}\sqrt {{{2l + 1} \over {4\pi }}{{(l - m)!} \over {(l + m)!}}} P_l^m(\cos \theta ){e^{im\phi }}
\end{equation}
be the spherical harmonics. For example, in the case when $l=0$, we have 
\begin{equation}
    Y_0^0(\theta, \phi) = \frac{1}{\sqrt{4\pi}}
\end{equation}

We assume the solution has the following expansion
\begin{equation}\label{equ:uexpansion3}
    u(r,\theta, \phi) = (1-r^2)^{\frac{\alpha}{2}}\sum\limits_{n = 0}^\infty  {\sum\limits_{l = 0}^\infty  {\sum\limits_{m =  - l}^l {c_{l,m}^n{r^l}Y_l^m(\theta ,\phi )P_n^{\left( {{\alpha  \over 2},{1 \over 2} + l} \right)}(2{r^2} - 1)} } } 
\end{equation}
then
\begin{equation}\label{equ:fexpansion3}
\begin{split}
    f(r,\theta,\phi) & = \Re (-\Delta)^{\frac{\alpha}{2}} u(r,\theta,\phi) \\
    & = \sum\limits_{n = 0}^\infty  {\sum\limits_{l = 0}^\infty  {\sum\limits_{m =  - l}^l {d_{n,l}^{n,\alpha }c_{l,m}^n{r^l}Y_l^m(\theta ,\phi )P_n^{\left( {{\alpha  \over 2},{1 \over 2} + l} \right)}(2{r^2} - 1)} } } 
\end{split}
\end{equation}
where $\Re$ denotes the real part.

Then coefficients in \cref{equ:uexpansion3} can be expressed through $u(r,\theta,\phi) $ by
\begin{equation}
    c_{l,m}^n = {{\int_0^{2\pi } {d\theta \int_0^\pi  {d\phi \int_0^1 {u(r,\theta ,\phi ){r^{l + 2}}Y_l^m(\theta ,\phi )^* P_n^{\left( {{\alpha  \over 2},{1 \over 2} + l} \right)}(2{r^2} - 1)\sin \phi dr} } } } \over {\int_0^{2\pi } {d\theta \int_0^\pi  {d\phi \int_0^1 {{r^{2l + 2}}{{(1 - {r^2})}^{{\alpha  \over 2}}}|Y_l^m{{(\theta ,\phi )}|^2}P_n^{\left( {{\alpha  \over 2},{1 \over 2} + l} \right)}{{(2{r^2} - 1)}^2}\sin \phi dr} } } }}
\end{equation}
or through $f(r,\theta,\phi) $ by
\begin{equation}
    c_{l,m}^n = {{\int_0^{2\pi } {d\theta \int_0^\pi  {d\phi \int_0^1 {f(r,\theta ,\phi ){r^{l + 2}}{{(1 - {r^2})}^{{\alpha  \over 2}}}Y_l^m(\theta ,\phi )^* P_n^{\left( {{\alpha  \over 2},{1 \over 2} + l} \right)}(2{r^2} - 1)\sin \phi dr} } } } \over {d_{l,m}^{\alpha ,3}\left( {\int_0^{2\pi } {\int_0^\pi  |{Y_l^m{{(\theta ,\phi )}|^2}\sin \phi d\phi d\theta } } } \right)\left( {\int_0^1 {{r^{2l + 2}}{{(1 - {r^2})}^{{\alpha  \over 2}}}P_n^{\left( {{\alpha  \over 2},{1 \over 2} + l} \right)}{{(2{r^2} - 1)}^2}dr} } \right)}}
\end{equation}

Note that $c_{l,m}^n$ is a complex number, and we need to take the conjugate of $Y_l^m(\theta ,\phi )$ in the numerator. 

When $u(r,\theta,\phi)$ or $f(r,\theta,\phi)$ is a radial function, $f(r,\theta,\phi)$ or $u(r,\theta,\phi)$ is also radial. And we have the following simplifications:
\begin{align}
    u(r) &= \mathop \sum \limits_{n = 0}^\infty  c_{00}^n{1 \over {\sqrt {4\pi } }}P_n^{\left( {{\alpha  \over 2},{1 \over 2} + l} \right)}(2{r^2} - 1){(1 - {r^2})^{{\alpha  \over 2}}}\\
    f(r) &= \mathop \sum \limits_{n = 0}^\infty  d_{n0}^{\alpha ,3}c_{00}^n{1 \over {\sqrt {4\pi } }}P_n^{\left( {{\alpha  \over 2},{1 \over 2} + l} \right)}(2{r^2} - 1)\\
    c_{00}^n &= {{\sqrt {4\pi } } \over {d_{n,0}^{\alpha ,3}}}{{\int_0^1 {f(r){r^2}P_n^{\left( {{\alpha  \over 2},{1 \over 2}} \right)}(2{r^2} - 1){{(1 - {r^2})}^{{\alpha  \over 2}}}dr} } \over {\int_0^1 {P_n^{\left( {{\alpha  \over 2},{1 \over 2}} \right)}{{(2{r^2} - 1)}^2}{r^2}{{(1 - {r^2})}^{{\alpha  \over 2}}}dr} }}\\
    &=\sqrt {4\pi } {{\int_0^1 {u(r){r^2}P_n^{\left( {{\alpha  \over 2},{1 \over 2}} \right)}(2{r^2} - 1)dr} } \over {\int_0^1 {P_n^{\left( {{\alpha  \over 2},{1 \over 2}} \right)}{{(2{r^2} - 1)}^2}{r^2}{{(1 - {r^2})}^{{\alpha  \over 2}}}dr} }}
\end{align}

\subsection{Error Analysis}

To carry out the error analysis for the proposed spectral method, we first consider the approximation properties of the basis functions $\{P_{l,m,n}(\bx)\}$. A typical approach is to compare it with the polynomials basis.

Denote $\mathbb{P}_K$ be the set of all polynomials in $\RR^d$ with degrees no larger than $K$, and $\mathcal{P}_K$ the space of homogeneous polynomials of degree $K\geq 0$, i.e., the vector space spanned by 
\begin{equation}
	x_1^{\alpha_1} x_2^{\alpha_2} \ldots x_d^{\alpha_d}, \quad \sum_{i=1}^d \alpha_i = K
\end{equation}

We define the subspace $H_K\subset \mathcal{P}_K$ the space spanned by $V_{K,m}(\RR^d)$, $m=1,2,\ldots, M_{d,l}$. A remarkable result is given in \cite{spherical:online}
\begin{lemma}\label{lemma:har}
Assume $n\geq 0$, we have the decomposition
	$$\mathcal{P}_d = H_d\oplus r^2 H_{d-2} \oplus r^4 H_{d-4}\oplus \ldots$$
\end{lemma}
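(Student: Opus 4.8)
The plan is to reduce the full decomposition to the single-step identity
\[ \mathcal{P}_k \;=\; H_k \;\oplus\; r^2\,\mathcal{P}_{k-2}, \qquad k\ge 2, \]
for each homogeneous degree $k$ (with the convention $\mathcal{P}_j=\{0\}$ for $j<0$), and then to iterate it; here $r^2 = x_1^2+\cdots+x_d^2$ and $k$ is the degree index (the index written $d$ in the statement). By construction $H_k$ is exactly the kernel of the Laplacian on $\mathcal{P}_k$, since the solid harmonics $V_{k,m}$, $1\le m\le M_{d,k}$, span all homogeneous harmonic polynomials of degree $k$.

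The key point is that $\Delta\colon\mathcal{P}_k\to\mathcal{P}_{k-2}$ is surjective, and the clean way to obtain this is to introduce the Fischer (apolar) pairing $\langle p,q\rangle := \bigl(p(\partial_{x_1},\dots,\partial_{x_d})\,q\bigr)$ on each $\mathcal{P}_j$, which satisfies $\langle x^\alpha,x^\beta\rangle=\alpha!\,\delta_{\alpha\beta}$ and is therefore positive definite. Since $(r^2 p)(\partial)=\Delta\circ p(\partial)$ as constant-coefficient differential operators, and such operators commute, one obtains the adjointness relation
\[ \langle\, r^2 p,\;q\,\rangle \;=\; \langle\, p,\;\Delta q\,\rangle, \qquad p\in\mathcal{P}_{k-2},\ q\in\mathcal{P}_k, \]
essentially for free. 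Because multiplication by $r^2$ is injective on the polynomial ring, its adjoint $\Delta$ is surjective onto $\mathcal{P}_{k-2}$. (Alternatively, one may bypass the pairing and deduce surjectivity from rank--nullity once the classical dimension count $\dim H_k=M_{d,k}=\dim\mathcal{P}_k-\dim\mathcal{P}_{k-2}$ is granted from the spherical-harmonics literature already cited.)

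Surjectivity makes the orthogonal complement of $H_k=\ker\Delta$ in $(\mathcal{P}_k,\langle\cdot,\cdot\rangle)$ equal to the range of $r^2$-multiplication on $\mathcal{P}_{k-2}$, which gives both $\mathcal{P}_k=H_k\oplus r^2\mathcal{P}_{k-2}$ and the triviality $H_k\cap r^2\mathcal{P}_{k-2}=\{0\}$. (This last intersection can also be checked by hand: if $r^2 q$ is harmonic then the identity $\Delta(r^{2m}q)=2m\,(d+2m+2\deg q-2)\,r^{2m-2}q+r^{2m}\Delta q$ forces a nonzero $q$ to be divisible by arbitrarily high powers of $r^2$, which is impossible.) Iterating, $\mathcal{P}_{k-2}=H_{k-2}\oplus r^2\mathcal{P}_{k-4}$, hence $\mathcal{P}_k = H_k\oplus r^2H_{k-2}\oplus r^4\mathcal{P}_{k-4}$, and so on; since constants and linear forms are harmonic, $\mathcal{P}_0=H_0$ and $\mathcal{P}_1=H_1$, so the recursion halts after $\lfloor k/2\rfloor+1$ steps and yields $\mathcal{P}_k=H_k\oplus r^2H_{k-2}\oplus r^4H_{k-4}\oplus\cdots$, as claimed.

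The only real conceptual step is recognizing that the Fischer pairing linearizes everything; after that there is almost nothing to compute, since both the adjointness relation and the positive-definiteness are one-line facts and the iteration is pure bookkeeping. The main work, if one insists on avoiding the inner product, shifts into proving the surjectivity of $\Delta$ and the directness of the sum directly --- routine, but noticeably more laborious than the pairing argument.
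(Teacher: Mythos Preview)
Your argument is correct and is in fact the standard proof of this classical decomposition of homogeneous polynomials. The paper, however, does not supply a proof of its own: its entire ``proof'' is a citation (``See \cite{spherical:online}''). So there is nothing substantive to compare against; your write-up simply fills in what the paper deferred to the literature, and does so by the cleanest route (the Fischer/apolar pairing making $\Delta$ and multiplication by $r^2$ adjoint, hence surjectivity of $\Delta$ and the direct-sum splitting, followed by iteration).
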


\begin{proof}
	See \cite{spherical:online}.
\end{proof}

To illustrate, consider $d=3$, then we have 
\begin{equation}
	\mathcal{P}_3 = \mathrm{span}\{x^3, y^3, z^3, x^2y, x^2z, xy, y^2z, xz^2, yz^2, xyz\}
\end{equation}
By \cref{lemma:har}, we have $\mathcal{P}_3 = H_3 \oplus r^2 H_1$, and $H_1$, $H_3$ can be read from \cref{tab:spherical}. In the table, the real spherical harmonics are defined by 
\begin{equation}\label{equ:real}
  Y_{lm} =
\begin{cases}
\frac{1}{\sqrt{2}} ( Y_l^m + (-1)^mY_l^{-m} ) & \text{if } m > 0 \\
Y_l^m        & \text{if } m = 0 \\
\frac{1}{i \sqrt{2}}( Y_l^{-m} - (-1)^mY_l^m)       & \text{if } m < 0
\end{cases}
\end{equation}

\begin{table}[htbp]
\centering

\begin{tabular}{@{}llll@{}}
\toprule
                       & $m$ & $Y_l^m$                                                      & $V_{l,m}$                            \\ \midrule
$l=0$                  & 0   & $\sqrt{\frac{1}{4\pi}}$                                      & $\sqrt{\frac{1}{4\pi}}$            \\ \midrule
\multirow{3}{*}{$l=1$} & $-1$   & $\sqrt{\frac{3}{4\pi}}\sin\psi\sin\theta$                    & $\sqrt{\frac{3}{4\pi}}x$           \\
                       & 0   & $\sqrt{\frac{3}{4\pi}}\cos\theta$                            & $\sqrt{\frac{3}{4\pi}}z$           \\
                       & 1   & $\sqrt{\frac{3}{4\pi}}\cos\psi\sin\theta$                    & $\sqrt{\frac{3}{4\pi}}y$           \\ \midrule
\multirow{5}{*}{$l=2$} & $-2$   & $\sqrt{\frac{15}{4\pi}}\sin\psi\cos\psi\sin^2\theta$         & $\sqrt{\frac{15}{4\pi}}xy$         \\
                       & $-1$  & $\sqrt{\frac{15}{4\pi}}\sin\psi\cos\psi\cos\theta$           & $\sqrt{\frac{15}{4\pi}}yz$         \\
                       & 0  & $\sqrt{\frac{5}{16\pi}}(3\cos^2\theta-1)$                    & $\sqrt{\frac{5}{16\pi}}(2z^2-x^2-y^2)$   \\
                       & 1   & $\sqrt{\frac{15}{4\pi}}\cos\psi\sin\theta\cos\theta$         & $\sqrt{\frac{15}{8\pi}}xz$         \\
                       & 2   & $\sqrt{\frac{15}{16\pi}}(\cos^2\psi-\sin^2\psi)\sin^2\theta$ & $\sqrt{\frac{15}{32\pi}}(x^2-y^2)$ \\ \bottomrule
\end{tabular}
\caption{Real spherical harmonics $Y_{lm}$ and solid spherical harmonics $V_{l,m}$ in 3D for $l=0,1,2$.}
\label{tab:spherical}
\end{table}

We now clarify the relationship between the polynomial basis and the basis $\{P_{l,m,n}\}$ in the unit ball.

\begin{lemma}\label{lemma:l1}
    Let 
\begin{equation}
    \mathcal{S}_K:=\mathrm{span}\{P_{l,m,n}\}_{n\leq K, \, l\leq 2K, \, 0\leq m\leq M_{d,l}}
\end{equation}
then $\mathbb{P}_K \subset \mathcal{S}_K$. Recall $P_{l,m,n}$ is defined as \cref{equ:plmn}
\begin{equation}
    P_{l,m,n}(\bx) := V_{l,m}(\bx)P_n^{(\alpha /2,d/2 + l - 1)}(2|\bx{|^2} - 1)
\end{equation}
\end{lemma}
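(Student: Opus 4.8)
The plan is to show that every monomial $x_1^{\beta_1}\cdots x_d^{\beta_d}$ of total degree $|\beta|\le K$ lies in $\mathcal{S}_K$; since these monomials span $\mathbb{P}_K$ and $\mathcal{S}_K$ is a linear subspace, this suffices. Fix a monomial $q(\bx)$ of degree $j\le K$. First I would peel off the radial part: writing $j = l_0 + 2s$ is not quite right since a single monomial need not be homogeneous of the right parity, so instead I would first reduce to the homogeneous case by noting $q$ is itself homogeneous of degree $j$, hence $q\in\mathcal{P}_j$. Then \cref{lemma:har} gives the decomposition $\mathcal{P}_j = H_j \oplus r^2 H_{j-2}\oplus r^4 H_{j-4}\oplus\cdots$, so I can write $q = \sum_{s\ge 0} r^{2s} h_{j-2s}$ with $h_{j-2s}\in H_{j-2s}$, a finite sum in which $s$ runs up to $\lfloor j/2\rfloor$.

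Next, each $h_{j-2s}$ is a linear combination of the solid harmonics $V_{l,m}$ with $l = j-2s$, so it is enough to show $r^{2s}V_{l,m}(\bx)\in\mathcal{S}_K$ whenever $l = j-2s$ and $j\le K$. Here I would use that $\{P_n^{(\alpha/2,\,d/2+l-1)}(2|\bx|^2-1)\}_{n\ge 0}$, restricted to $|\bx|^2\in[0,1]$, spans the same space as $\{|\bx|^{2n}\}_{n\ge 0}$ in each fixed degree; more precisely, since $P_n^{(\alpha/2,d/2+l-1)}$ is a polynomial of exact degree $n$ in the variable $2|\bx|^2-1$, the set $\{P_0^{(\alpha/2,d/2+l-1)},\dots,P_s^{(\alpha/2,d/2+l-1)}\}$ evaluated at $2|\bx|^2-1$ forms a basis for polynomials in $|\bx|^2$ of degree $\le s$. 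Hence $|\bx|^{2s}$ is a linear combination of $P_n^{(\alpha/2,d/2+l-1)}(2|\bx|^2-1)$ for $n=0,\dots,s$, and multiplying by $V_{l,m}(\bx)$ yields $r^{2s}V_{l,m}(\bx) = \sum_{n=0}^s c_n\, V_{l,m}(\bx)P_n^{(\alpha/2,d/2+l-1)}(2|\bx|^2-1) = \sum_{n=0}^s c_n P_{l,m,n}(\bx)$.

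Finally I would check that all indices appearing stay within the ranges defining $\mathcal{S}_K$: we have $n\le s\le\lfloor j/2\rfloor\le\lfloor K/2\rfloor\le K$, and $l = j-2s\le j\le K\le 2K$, and $m$ ranges over $1,\dots,M_{d,l}$ as required. Summing over $s$ and over the finitely many monomials of degree $\le K$ shows $\mathbb{P}_K\subset\mathcal{S}_K$.

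The main obstacle, and the only step needing genuine care rather than bookkeeping, is the second paragraph: justifying that powers of $|\bx|^2$ are spanned by the Jacobi polynomials $P_n^{(\alpha/2,d/2+l-1)}(2|\bx|^2-1)$ of degree $\le s$. This is really just the statement that a family of polynomials of strictly increasing degree forms a basis for each truncated polynomial space, together with the observation that $2|\bx|^2-1$ is an affine change of variable; but one should state it cleanly so that the argument does not secretly assume anything about the special structure of Jacobi polynomials beyond their degree. Everything else — invoking \cref{lemma:har}, expanding into solid harmonics, and tracking that the triple index $(l,m,n)$ lands in the prescribed box — is routine.
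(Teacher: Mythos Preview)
Your argument is correct and rests on the same two ingredients as the paper's proof: the harmonic decomposition of homogeneous polynomials (\cref{lemma:har}) and the elementary fact that the Jacobi polynomials $P_0,\dots,P_s$ in the variable $2|\bx|^2-1$ span all polynomials of degree $\le s$ in $|\bx|^2$. The only structural difference is that the paper proceeds by induction on $K$, peeling off a single factor of $|\bx|^2$ at each step, whereas you apply the full decomposition $\mathcal{P}_j = \bigoplus_s r^{2s}H_{j-2s}$ at once and then expand $r^{2s}$ in the Jacobi basis directly. Your direct route is arguably cleaner --- the induction in the paper hides exactly the degree-counting step you make explicit --- and as a byproduct your index bookkeeping actually yields the sharper bounds $n\le\lfloor K/2\rfloor$ and $l\le K$, comfortably inside the box $n\le K$, $l\le 2K$ that defines $\mathcal{S}_K$.
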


\begin{proof}
We prove the result by mathematical induction.

When $K=0$, $\mathbb{P}_0=\mathcal{S}_0=$ constant polynomials. When $K=1$, $\mathbb{P}_1=\mathrm{span}\{1,x_1,x_2,\ldots,x_d\}$. Note $x_i$ are all valid solutions to $\Delta u(\bx)=0, \bx\in B_1^d(0)$ thus $\mathrm{span}\{V_{1,m}\}_{1\leq m\leq M_{1,d}}=\mathrm{span}\{x_1,x_2,\ldots,x_d\}$ and therefore 
\[ \mathrm{span}\{V_{0,0}, V_{1,m}\}_{1\leq m\leq M_{1,d}}=\mathrm{span}\{1,x_1,x_2,\ldots,x_d\}=\mathbb{P}_1
\]
implying $\mathbb{P}_1 \subset \mathcal{S}_1$.

 Assume the claim is true for all integers smaller than $K\geq 1$. Let $x_1^{\alpha_1}x_2^{\alpha_2}\ldots x_d^{\alpha_d}$ be a monomial with $\sum_{i=1}^d\alpha_i=K+1, \alpha_i\in\mathbb{N}$, we prove $x_1^{\alpha_1}x_2^{\alpha_2}\ldots x_d^{\alpha_d}\in \mathcal{S}_{K+1}$. 

According to \cref{lemma:har}, there exists a polynomial $f(\bx)\in \mathbb{P}_{K-1}$, s.t.
\begin{equation}
    x_1^{\alpha_1}x_2^{\alpha_2}\ldots x_d^{\alpha_d} = \underbrace{\sum_{m=1}^{M_{d,K+1}} C_m V_{K+1,m}}_{\in H_{K+1}} + |\bx|^2 f(\bx)
\end{equation}
where $C_m\in\RR$.

As $|\bx|^2f(\bx) = \frac{(2r^2-1)+1}{2}f(\bx) \in \mathcal{S}^{K+1}$, where $r=|\bx|$, we deduce that $x_1^{\alpha_1}x_2^{\alpha_2}\ldots x_d^{\alpha_d}\in \mathcal{S}^{K+1}$. Therefore the lemma is true.
\end{proof}

Then we state an approximation result for the polynomials in the unit ball. The approximation result is essentially stating that we can approximate a $C^k$ function with a polynomial of degree $n$ and obtain the error rate $\mathcal{O}(n^{-k})$.

\begin{lemma}[Theorem 3.4, \cite{ragozin1971constructive}]\label{lemma:l2}
    Given $f\in C^k(\bar B^d_1(0))$, define 
    \begin{align}
        w(f;h) &= \sup\{|f(x)-f(y)|:|x-y|\leq h\}\\
        w(f^{(k)};h) &= \sum_{|\beta| = k} w(\partial^\beta f; h)
    \end{align}
    then there exist polynomials $p_n$ with degree at most $n$, such that
    \begin{equation}
        \|f-p_n\|_{L^\infty(B^d_1(0))}\leq D(k,d)n^{-k}(n^{-1}\|f\|_{C^k(B^d_1(0))} + w(f^{(k)};1/n))
    \end{equation}
    where $D(k,d)$ is a constant which depends only on $k$ and $m$.
\end{lemma}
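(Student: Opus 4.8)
The statement is a classical Jackson-type theorem for simultaneous polynomial approximation on a ball, and the natural route is to transfer it to the sphere, where the rotation group supplies the machinery for building near-best approximations. Let $S^d\subset\RR^{d+1}$ be the unit sphere and let $\pi:S^d\to\bar B^d_1(0)$ be the coordinate projection $\pi(y_1,\dots,y_{d+1})=(y_1,\dots,y_d)$. Define $F:=f\circ\pi$. The point is that $\pi$ is the restriction to $S^d$ of a \emph{linear} map $\RR^{d+1}\to\RR^d$, so no square roots enter and $F\in C^k(S^d)$ with $\|F\|_{C^k(S^d)}\le C(d)\|f\|_{C^k(\bar B^d_1(0))}$ and $w(F^{(k)};h)\le C(k,d)\,w(f^{(k)};h)$. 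Moreover $F$ is invariant under the reflection $\sigma:(y_1,\dots,y_d,y_{d+1})\mapsto(y_1,\dots,y_d,-y_{d+1})$ since it does not depend on $y_{d+1}$.

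\textbf{Step 2 (Jackson's theorem on $S^d$).} Apply the spherical Jackson theorem: there is a spherical polynomial $Q_n$ of degree at most $n$ (the restriction to $S^d$ of a polynomial of degree $\le n$ in $\RR^{d+1}$) with
\[
  \|F-Q_n\|_{L^\infty(S^d)}\le D_0(k,d)\,n^{-k}\bigl(n^{-1}\|F\|_{C^k(S^d)}+w(F^{(k)};1/n)\bigr).
\]
This is the analytic core of the argument. It is obtained by convolving $F$ over $SO(d+1)$ against a Jackson-type kernel $J_n$, i.e.\ a nonnegative zonal kernel normalized so that $\|J_n\|_{L^1}=1$ and whose moments up to order $2k$ are $\mathcal{O}(n^{-2k})$; the error of the convolution is then estimated by a $k$-th order Taylor expansion of $F$, and the fact that the convolution is a spherical polynomial of degree $\le n$ is controlled using Bernstein's inequality on the sphere.

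\textbf{Step 3 (symmetrize and descend to the ball).} Replace $Q_n$ by $\widetilde Q_n(y):=\tfrac12\bigl(Q_n(y)+Q_n(\sigma y)\bigr)$. Since $F\circ\sigma=F$, the bound above persists for $\widetilde Q_n$, and now $\widetilde Q_n$ is even in $y_{d+1}$. Restrict to the upper hemisphere $\{y_{d+1}=\sqrt{1-|x|^2}\}$: because $y_{d+1}$ occurs in $\widetilde Q_n$ only through even powers $y_{d+1}^{2j}=(1-|x|^2)^j$, the function $p_n(x):=\widetilde Q_n\bigl(x,\sqrt{1-|x|^2}\bigr)$ is a genuine polynomial in $x=(x_1,\dots,x_d)$ of degree at most $n$. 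For $x\in\bar B^d_1(0)$ one has $f(x)-p_n(x)=F(y)-\widetilde Q_n(y)$ with $y=(x,\sqrt{1-|x|^2})\in S^d$, hence $\|f-p_n\|_{L^\infty(B^d_1(0))}\le\|F-\widetilde Q_n\|_{L^\infty(S^d)}$; absorbing the constants from Step 1 into $D_0$ gives the claimed estimate with $D(k,d)$ depending only on $k$ and $d$.

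\textbf{Main obstacle.} The only substantive step is Step 2: constructing the kernel $J_n$ with the correct $L^1$-normalization and moment decay, and then bookkeeping carefully so that the resulting constant depends only on $k$ and $d$ and not on $f$ or $n$. The lift, the symmetrization, and the restriction to the hemisphere are all elementary once the square-root-free projection $\pi$ is used. This program is carried out in detail in \cite{ragozin1971constructive}, which we therefore cite for the proof.
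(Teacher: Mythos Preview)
Your proposal is correct and aligns with the paper: the paper's own ``proof'' is simply the one-line citation ``See \cite{ragozin1971constructive},'' and you ultimately cite the same source. Your sketch of the lift-to-sphere, Jackson-on-$S^d$, symmetrize-and-descend argument is in fact a faithful outline of Ragozin's original proof, so you have supplied strictly more detail than the paper does while following the identical route.
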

\begin{proof}
    See \cite{ragozin1971constructive}.
\end{proof}

Finally, we can give an error estimate of the proposed method.

\begin{theorem}\label{thm:main}
Assume $f\in  C^r(\bar\Omega)$, $N\geq K, L\geq 2K$, where $K\in\mathbb{N}$, then there exists a constant $C>0$, only depending on $K,d$ such that the error
\begin{equation}
    \|u_{NL}-u\|_{L^2(\Omega)} \leq Cn^{-r}\|f\|_{C^r(\Omega)}
\end{equation}
where $u_{NL} = Lf_{NL}$ and $f_{NL}$ is the projection of $f$ onto $\{P_{l,m,n}\}_{0\leq l\leq L, 0\leq n\leq N, 1\leq m \leq M_{d,l}}$, i.e., 
\begin{align}
	f(\bx) &= \sum\limits_{l = 0}^{ + \infty } {\sum\limits_{n = 0}^{ + \infty } {\sum\limits_{m = 1}^{{M_{d,l}}} {\alpha _{n,l}^{\alpha ,d}d_{n,l}^{\alpha ,d}{P_{l,m,n}}({\bf{x}})} } }\\
	f_{NL}(\bx) &= \sum\limits_{l = 0}^{ L } {\sum\limits_{n = 0}^{ N } {\sum\limits_{m = 1}^{{M_{d,l}}} {\alpha _{n,l}^{\alpha ,d}d_{n,l}^{\alpha ,d}{P_{l,m,n}}({\bf{x}})} } } \label{equ:fnl}
\end{align}
\end{theorem}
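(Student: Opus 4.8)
The plan is to bound the error $\|u_{NL}-u\|_{L^2(\Omega)}$ by passing through the right-hand side. Since $u = Lf$ and $u_{NL} = Lf_{NL}$, linearity of the solution operator gives $u_{NL}-u = L(f_{NL}-f)$, and by \cref{lemma:l0} we have $\|u_{NL}-u\|_{L^2(\Omega)} = \|L(f_{NL}-f)\|_{L^2(\Omega)} \lesssim \|f_{NL}-f\|_{L^2(w)}$. So the whole problem reduces to estimating how well the truncated expansion $f_{NL}$ approximates $f$ in the weighted $L^2(w)$ norm. Because $f_{NL}$ is exactly the orthogonal projection of $f$ onto $\mathcal{S}_K \supset \mathbb{P}_K$ (using the inclusion from \cref{lemma:l1} with $N\geq K$, $L\geq 2K$), we get the best-approximation inequality $\|f_{NL}-f\|_{L^2(w)} \leq \|p_K - f\|_{L^2(w)}$ for \emph{any} polynomial $p_K\in\mathbb{P}_K$; in particular we may take $p_K$ to be the near-best polynomial supplied by \cref{lemma:l2}.

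The key steps, in order, are: (i) observe $u_{NL}-u = L(f_{NL}-f)$ and invoke \cref{lemma:l0}; (ii) identify $f_{NL}$ as the $L^2(w)$-orthogonal projection onto $\mathcal{S}_K$ — this follows because the $\{P_{l,m,n}\}$ form an orthogonal system in $L^2(w)$ and the coefficients $\alpha_{n,l}^{\alpha,d}d_{n,l}^{\alpha,d}$ in \cref{equ:fnl} are precisely the projection coefficients — so that by \cref{lemma:l1} the projection error onto $\mathcal{S}_K$ is no larger than the best approximation error from $\mathbb{P}_K$; (iii) apply \cref{lemma:l2} with $k=r$ to produce $p_n$ of degree $\leq n$ (here $n$ is the degree matching the parameter in the theorem statement; one takes $K$ comparable to $n$) with $\|f-p_n\|_{L^\infty(B_1^d(0))} \leq D(r,d) n^{-r}(n^{-1}\|f\|_{C^r} + w(f^{(r)};1/n))$; (iv) pass from the $L^\infty$ bound to the $L^2(w)$ bound, which is immediate since $w$ is integrable over $B_1^d(0)$, so $\|g\|_{L^2(w)} \leq \|w\|_{L^1}^{1/2}\|g\|_{L^\infty}$; (v) absorb the modulus-of-continuity term $w(f^{(r)};1/n)$, which is bounded by $2\|f\|_{C^r(\Omega)}$ (or, if one wants, note it is $o(1)$), into the constant, yielding $\|u_{NL}-u\|_{L^2(\Omega)} \leq Cn^{-r}\|f\|_{C^r(\Omega)}$.

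The main obstacle — really the only subtle point — is step (ii): making rigorous that truncating the $\{P_{l,m,n}\}$-expansion at $l\leq L$, $n\leq N$ with $N\geq K$, $L\geq 2K$ does at least as well as the best polynomial approximation from $\mathbb{P}_K$. This rests on the orthogonality theorem (the $\{P_{l,m,n}\}$ are a complete orthonormal basis of $L^2(w)$), so that $f-f_{NL}$ is orthogonal to $\mathcal{S}_K$ and hence $\|f-f_{NL}\|_{L^2(w)} = \operatorname{dist}_{L^2(w)}(f,\mathcal{S}_K) \leq \operatorname{dist}_{L^2(w)}(f,\mathbb{P}_K)$ by \cref{lemma:l1}; one must be a little careful that the index ranges in the definition of $f_{NL}$ in \cref{equ:fnl} indeed cover all of $\mathcal{S}_K$, i.e. that $0\leq n\leq N$ and $0\leq l\leq L$ with $N\geq K$, $L\geq 2K$ includes $\{P_{l,m,n}\}_{n\leq K,\,l\leq 2K}$. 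Everything else is routine: the solution operator bound is already in hand, and the polynomial approximation rate is quoted verbatim from \cite{ragozin1971constructive}.
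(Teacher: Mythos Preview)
Your proposal is correct and follows essentially the same route as the paper: write $u_{NL}-u=L(f_{NL}-f)$, apply \cref{lemma:l0}, use that $f_{NL}$ is the $L^2(w)$-projection onto a space containing $\mathbb{P}_K$ (via \cref{lemma:l1}) to replace $f_{NL}$ by the Ragozin polynomial $q_K$ from \cref{lemma:l2}, and then pass from $L^\infty$ to $L^2(w)$. If anything, you are more explicit than the paper on two points it leaves implicit: the factor $\|w\|_{L^1}^{1/2}$ in the $L^\infty\to L^2(w)$ step, and the handling of the modulus-of-continuity term $w(f^{(r)};1/n)\lesssim\|f\|_{C^r}$.
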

\begin{proof}
Let $q_K$ be the particular polynomial in \cref{lemma:l2} for $n=K$. By \cref{lemma:l0,lemma:l2} we have
    \begin{align}
        \|u_{NL}-u\|_{L^2(\Omega)} &= \|Lf_{NL}-Lf\|_{L^2(\Omega)} \\
        &\lesssim \|f_{NL}-f\|_{L^2(w)}\\
        &\leq \|q_K-f\|_{L^2(w)}\\
        &\leq \|q_K-f\|_{L^\infty(\Omega)}\label{equ:reuse} \\
        & \lesssim n^{-r}\|f\|_{C^{r}(\bar\Omega)}
    \end{align}
    
    The second inequality is due to \cref{lemma:l1} and the fact that $f_{NL}$ is the projection of $f$ onto $\{P_{l,m,n}\}_{0\leq l\leq L, 0\leq n\leq N, 1\leq m \leq M_{d,l}}$. 
\end{proof}

We now prove the spectral accuracy of the proposed algorithm. Let $\chi=[-1,1]^d$ be a hyper-square in $\RR^d$ which encloses $\Omega$.

We define the generalized Bernstein ellipse by
\begin{align}\label{eq-genB}
B(\mathcal{X},\rho):=B([-1,1],\rho_1)\times\cdots\times B([-1,1],\rho_D ),
\end{align}
where $B([-1,1],\rho_1)$ is the Bernstein ellipse.

\begin{theorem}[Theorem 2.1, \cite{glau2016improved}] 
 \label{Asymptotic_error_decay_multidim_combined}
  Let $f:\mathcal{X}\rightarrow\mathbb{R}$ have an analytic extension to some generalized Bernstein ellipse $B(\mathcal{X},\rho)$ for some parameter vector $\rho\in (1,\infty)^d$ with $\max_{x\in B(\mathcal{X},\rho)}|f(\bx)|\le V<\infty$. 
  
  Let $N = (N_1, N_2,\ldots, N_d)$ and $\bar N = \sum_{i=1}^d N_i$. Then there exists a polynomial $p_{\bar N}(\bx)$ with degree at most $\bar N$
  \begin{align}
   \max_{x\in\mathcal{X}}\big|f(\bx)& - p_{\bar N}(\bx)\big|\le\min\{a(\rho,N,d),b(\rho,N,d)\},
  \end{align}
  where, denoting by $S_d$ the set of all permutations on $d$ elements,
  \begin{align}
  a(\rho,N,d) &=
  \min_{\sigma\in S_d}\sum_{i=1}^d 4V\frac{\rho_{\sigma(i)}^{-N_i}}{\rho_i-1} + \sum_{k=2}^d 4V\frac{\rho_{\sigma(k)}^{-N_k}}{\rho_{\sigma(k)}-1} \, 2^{k-1}  \frac{(k-1) + 2^{k-1}-1}{\prod_{j=1}^{k-1}(1-\frac{1}{\rho_{\sigma(j)}})}\\
  b(\rho,N,d)&=2^{\frac{d}{2}+1} \; V \; \Big(\sum_{i=1}^d\rho_i^{-2N_i}\prod_{j=1}^d\frac{1}{1-\rho_j^{-2}}\Big)^{\frac{1}{2}}
  \end{align}
  \end{theorem}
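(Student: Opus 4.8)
The plan is to reduce the $d$-dimensional estimate to classical one-dimensional Chebyshev theory and then assemble the two competing bounds $a$ and $b$ by two separate multivariate arguments. First I would recall the Bernstein estimate in one variable: if $g:[-1,1]\to\RR$ extends analytically to the Bernstein ellipse $B([-1,1],\rho)$ with $|g|\le V$ there, then its Chebyshev coefficients satisfy $|c_k|\le 2V\rho^{-k}$, so the degree-$n$ Chebyshev projection obeys $\|g-\Pi_n g\|_{L^\infty([-1,1])}\le \frac{2V\rho^{-n}}{\rho-1}$ by summing the geometric tail; the analogous bound for degree-$n$ Chebyshev \emph{interpolation} costs an extra (roughly constant) factor coming from aliasing. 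I would also record that the Chebyshev projection/interpolation operators, viewed as maps on functions analytic in a slightly larger ellipse, have $L^\infty$ operator norm controllable in terms of $\rho$ — this is the source of the factors $1/(1-\rho^{-1})$ that appear throughout.

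For the bound $b(\rho,N,d)$ I would argue in a Hilbert-space fashion. Expand $f$ in the tensor Chebyshev basis; iterating the $1$D Cauchy estimate coordinate by coordinate gives $|c_{\mathbf k}|\le 2^d V\prod_{i=1}^d\rho_i^{-k_i}$. Choose $p_{\bar N}$ to be the truncation retaining all multi-indices $\mathbf k$ with $k_i\le N_i$ (degree at most $\sum_i N_i=\bar N$, as required). The discarded sup-norm mass is at most $\sum_{\mathbf k\notin\mathrm{box}}|c_{\mathbf k}|$; estimating this $\ell^1$ tail by Cauchy--Schwarz against the $\ell^2$ quantity $\big(\sum_{\mathbf k}\prod_j\rho_j^{-2k_j}\big)^{1/2}$ and carefully accounting for which modes are retained versus discarded produces the square-root structure, the factor $\prod_j(1-\rho_j^{-2})^{-1}$, and the $2^{d/2+1}$ prefactor of $b$.

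For the bound $a(\rho,N,d)$ I would use a telescoping, dimension-by-dimension interpolation. Write the full tensor interpolant as a composition $\mathcal I=\mathcal I_1\circ\cdots\circ\mathcal I_d$ of one-dimensional degree-$N_i$ Chebyshev interpolations and telescope
\[
f-\mathcal I f=\sum_{k=1}^d (\mathcal I_1\cdots\mathcal I_{k-1})\,(I-\mathcal I_k)\,f.
\]
For the $k$-th term, $(I-\mathcal I_k)f$ is controlled in the $k$-th variable by the $1$D bound with parameter $\rho_{\sigma(k)}$, uniformly in the remaining variables; applying the earlier operators $\mathcal I_1\cdots\mathcal I_{k-1}$ then multiplies this by the product of their $L^\infty$ operator norms on the relevant ellipses, which is exactly what generates the $2^{k-1}$ and $\prod_{j<k}(1-\rho_{\sigma(j)}^{-1})^{-1}$ factors; one finally optimizes over the permutation $\sigma\in S_d$. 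The main obstacle is precisely this last step: one must track how the analytic-extension polyellipse shrinks each time a coordinate is interpolated, and bound the Lebesgue-type operator norm of each partial interpolation, without degrading the geometric decay rates — landing the constants exactly as in $a$ is the delicate bookkeeping, whereas the decay rates themselves follow immediately from the $1$D theory. Since both constructions yield a polynomial of degree at most $\bar N$, taking the smaller of the two error bounds gives the claimed $\min\{a,b\}$.
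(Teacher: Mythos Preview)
The paper does not prove this statement: it is quoted verbatim as Theorem~2.1 of \cite{glau2016improved} and is used as a black box (the only accompanying text is a remark identifying $p_{\bar N}$ with the tensor-product Chebyshev interpolant). So there is no ``paper's own proof'' to compare your proposal against.

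That said, your outline is the standard route for results of this type and is essentially what the cited reference does: the $b$-bound comes from the tensor Chebyshev coefficient decay $|c_{\mathbf k}|\le 2^d V\prod_i\rho_i^{-k_i}$ together with an $\ell^1$--$\ell^2$ estimate on the tail, and the $a$-bound comes from the dimension-by-dimension telescoping $f-\mathcal I f=\sum_k(\mathcal I_1\cdots\mathcal I_{k-1})(I-\mathcal I_k)f$ combined with one-dimensional Bernstein-ellipse bounds and operator-norm estimates for the partial interpolations. One small correction: for the $a$-bound the natural polynomial is the tensor Chebyshev \emph{interpolant} (as the paper's remark indicates), not the projection, so the aliasing constant enters directly rather than as an optional upgrade; and the ``shrinking polyellipse'' worry you flag is not actually an issue here, since each $\mathcal I_j$ acts only in the $j$-th coordinate and the analyticity domain in the remaining coordinates is untouched---the bookkeeping is purely about stacking the $L^\infty$ operator norms of the already-applied interpolations, which is where the factors $2^{k-1}$ and $\prod_{j<k}(1-\rho_{\sigma(j)}^{-1})^{-1}$ come from.
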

\begin{remark}
    The polynomial can be chosen as 
    \begin{equation}
        p_{\bar N} (\bx) = \sum_{j\in J} c_jT_j(\bx)
    \end{equation}
    The summation index $j$ is a multi-index over $J=\{(j_1,j_2,\ldots,j_d):j_i\in \mathbb{N}, j_i\leq \lfloor K/d\rfloor \}$. For $j=(j_1,j_2,\ldots,j_D)$, $T_j$ is defined by 
    \begin{equation}
        T_j(x_1,\ldots,x_d) = \Pi_{i=1}^d T_{j_i}(x_i)
    \end{equation}
    where $T_i$ are Chebyshev polynomials. The coefficients are 
    \begin{equation}
        {c_j} = \left( {\Pi _{i = 1}^d\frac{{{2^{1[0 < {j_i} < {N_i}]}}}}{{{N_i}}}} \right){\sum _{{k_1} = 0}^{{N_1}}}{\vphantom{\sum}}'' \ldots {\sum_{{k_d} = 0}^{{N_d}}}{\vphantom{\sum}}'' f({x^{({k_1},{k_2}, \ldots ,{k_d})}})\Pi _{i = 1}^d\cos \left( {{j_i}\pi \frac{{{k_i}}}{{{N_i}}}} \right)
    \end{equation}
    where $\sum''$ indicates that the first and last summand are halved. The Chebyshev nodes are given by $x_{k_i}=\cos\left(\pi\frac{k_i}{N_i} \right)$.
\end{remark}

\begin{theorem}
    Let $f\in B(\mathcal{X},\underline\rho)$, where $\underline \rho = (\rho, \rho,\ldots, \rho)$, and $N\geq K, L\geq 2K, K\in\mathbb{N}$ then we have the error estimate 
    \begin{equation}
        \|u_{NL}-u\|_{L^2(\Omega)} \lesssim \frac{\rho^{-\lfloor \frac{K}{d}\rfloor}}{(\rho-1)(1-\frac1\rho)^{d-1}} \|f\|_{L^\infty(B(\mathcal{X},\rho))}
    \end{equation}
\end{theorem}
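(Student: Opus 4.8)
The plan is to re-run the proof of \cref{thm:main} essentially verbatim, replacing the algebraically convergent polynomial produced by \cref{lemma:l2} with the Chebyshev interpolant guaranteed by \cref{Asymptotic_error_decay_multidim_combined} (and exhibited explicitly in the remark that follows it). Since $f$ now extends analytically to the generalized Bernstein ellipse $B(\mathcal{X},\underline\rho)$, this interpolant converges geometrically in $\rho$ rather than algebraically in $n$, and \cref{lemma:l0,lemma:l1} let that rate pass to $u_{NL}$ with no loss.

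First I would set $n_0 := \lfloor K/d\rfloor$ and invoke \cref{Asymptotic_error_decay_multidim_combined} with the parameter vector $N = (n_0,\dots,n_0)$, so that $\bar N = d\,n_0 \le K$; consequently the polynomial $p_{\bar N}$ it provides lies in $\mathbb{P}_K$ and satisfies $\|f-p_{\bar N}\|_{L^\infty(\mathcal{X})} \le a(\underline\rho,N,d)$. Next I would simplify $a(\underline\rho,N,d)$: since all components of $\underline\rho$ equal $\rho$, every permutation in $S_d$ gives the same value and each factor $\rho_{\sigma(i)}^{-N_i}$ equals $\rho^{-n_0}$, while $0 < 1-1/\rho < 1$ lets me bound $(1-1/\rho)^{-(k-1)} \le (1-1/\rho)^{-(d-1)}$ for every $k\le d$; absorbing the finitely many purely $d$-dependent combinatorial constants (the factors $2^{k-1}$, the quantities $(k-1)+2^{k-1}-1$, and the leading "$d$") into one constant and using $V = \|f\|_{L^\infty(B(\mathcal{X},\rho))}$, this collapses to $a(\underline\rho,N,d) \lesssim \frac{\rho^{-\lfloor K/d\rfloor}}{(\rho-1)(1-1/\rho)^{d-1}}\|f\|_{L^\infty(B(\mathcal{X},\rho))}$, which is exactly the prefactor claimed.

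Then I would chain the inequalities from the proof of \cref{thm:main}: \cref{lemma:l0} gives $\|u_{NL}-u\|_{L^2(\Omega)} = \|L(f_{NL}-f)\|_{L^2(\Omega)} \lesssim \|f_{NL}-f\|_{L^2(w)}$; since $N\ge K$ and $L\ge 2K$ the span $\{P_{l,m,n}\}_{0\le l\le L,\,0\le n\le N,\,1\le m\le M_{d,l}}$ contains $\mathcal{S}_K$, hence by \cref{lemma:l1} it contains $\mathbb{P}_K\ni p_{\bar N}$, and because $f_{NL}$ is the truncation of the $L^2(w)$-orthogonal expansion of $f$ in the orthogonal family $\{P_{l,m,n}\}$ it is the $L^2(w)$-orthogonal projection onto that span, so $\|f_{NL}-f\|_{L^2(w)} \le \|p_{\bar N}-f\|_{L^2(w)} \lesssim \|p_{\bar N}-f\|_{L^\infty(\Omega)} \le \|p_{\bar N}-f\|_{L^\infty(\mathcal{X})}$ (using $\int_\Omega w(\bx)\,d\bx<\infty$ and $\Omega\subset\mathcal{X}$); substituting the bound from the previous step finishes the proof.

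I do not anticipate a genuine obstacle: the argument is a substitution into the template of \cref{thm:main}, and the only care needed is bookkeeping — verifying $\bar N\le K$ so the interpolant lands in $\mathbb{P}_K$, identifying $f_{NL}$ with the $L^2(w)$-orthogonal projection so that it beats $p_{\bar N}$, and reducing $a(\underline\rho,N,d)$ to the stated rational prefactor. The one real decision point is choosing the $a$-bound rather than the $b$-bound of \cref{Asymptotic_error_decay_multidim_combined}: the $b$-bound would yield the cleaner factor $\sqrt d\,2^{d/2+1}\rho^{-n_0}(1-\rho^{-2})^{-d/2}$, which does not match the claimed prefactor, so the $a$-bound is the one to use.
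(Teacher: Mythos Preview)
Your proposal is correct and follows essentially the same route as the paper: set each $N_i=\lfloor K/d\rfloor$ so that $p_{\bar N}\in\mathbb{P}_K$, bound the $a$-term of \cref{Asymptotic_error_decay_multidim_combined} by the stated prefactor, and then rerun the chain of inequalities from the proof of \cref{thm:main} using \cref{lemma:l0,lemma:l1} and the projection property of $f_{NL}$. Your write-up is in fact more careful than the paper's own proof, which sketches the same steps but leaves the simplification of $a(\underline\rho,N,d)$ and the identification of $f_{NL}$ as the $L^2(w)$-orthogonal projection implicit.
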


\begin{proof}
    The proof can be adapted from \cref{thm:main} by picking $q_K=p_{\bar N}(\bx)$. Let $V=\max_{x\in B(\chi,\rho)}$ and $N_i =\lfloor \frac{K}{d}\rfloor$, then according to \cref{Asymptotic_error_decay_multidim_combined}, we have
    \begin{equation}
        \max_{x\in \chi} |f(\bx)-p_{\bar N}(\bx)| \lesssim \frac{\rho^{-\lfloor \frac{K}{d}\rfloor}V}{\rho-1}  + \frac{\rho^{-\lfloor \frac{K}{d}\rfloor}V}{(\rho-1)(1-\frac1\rho)^{d-1}}\lesssim \frac{\rho^{-\lfloor \frac{K}{d}\rfloor}V}{(\rho-1)(1-\frac1\rho)^{d-1}}
    \end{equation}

    Since $\sum_{i=1}^d N_i = \sum_{i=1}^d \lfloor \frac{K}{d}\rfloor \leq K $, $p_{\bar N}(\bx)$ is a polynomial with degree at most $K$.  Therefore, a similar argument to \cref{thm:main} yields
    \begin{align}
        \|u_{NL}-u\|_{L^2(\Omega)} &\lesssim \|f_{NL}-f\|_{L^2(w)}\lesssim \|p_{\bar N}(\bx)-f\|_{L^2(w)}\\
        &\lesssim \|u(\bx)-p_{\bar N}(\bx)\|_{L^\infty(\Omega)} \\
        &\lesssim \frac{\rho^{-\lfloor \frac{K}{d}\rfloor}}{(\rho-1)(1-\frac1\rho)^{d-1}} \|f\|_{L^\infty(B(\mathcal{X},\rho))}
    \end{align}
\end{proof}

\begin{remark}
The result is consistent with \cite{acosta2018regularity} for the 1D case. However, for 2D and 3D, we must be careful how \cref{equ:alpha} is computed to avoid unnecessary error. This issue will be tackled immediately in the next section. 
\end{remark}

\subsection{Quadrature for weight function $w(\bx) = (1-|\bx|^2)^{\frac{\alpha}{2}}$}\label{sect:quad}

In the equation above, we need to compute the integral of the following form
\begin{equation}\label{equ:integral}
    I = \int_0^1 f(r) (1-r^2)^{\frac{\alpha}{2}} dr
\end{equation}
where $f(r)$ is a polynomial.

Note the standard Gauss-Jacobi quadrature cannot be applied to \cref{equ:integral} directly as the integration limit is $[0,1]$ instead of $[-1,1]$. Here we derive a numerical quadrature based on Lanczos process~\cite{warn720026:online}. This is a variant of the Golub-Welsch algorithm~\cite{golub1969calculation}.

We first approximate the integral \cref{equ:integral} with some simple quadrature rules such as the middle point rule:
\begin{equation}\label{equ:form}
    I \approx I_w = \sum_{i=1}^N f(x_i) w_i
\end{equation}
where $N$ is a very large number such that $|I-I_w|$ is negligible, and 
\begin{equation}
    w_i = \frac{(1-r_i^2)^{\frac{\alpha}{2}} }{N}
\end{equation} 
is the weight. Let
\begin{equation}
    A = \begin{bmatrix}
        x_1 & & & \\
           & x_2 & & \\
           & & &\ddots & \\
           & & & & x_N
    \end{bmatrix}\!, \;
    b = \begin{bmatrix}
        \sqrt{w_1}\\
        \sqrt{w_2}\\
        \vdots\\
        \sqrt{w_N}
    \end{bmatrix}
\end{equation}
thus we can write \cref{equ:form} as
\begin{equation}
    I_w = b^T f(A) b
\end{equation}



Now we first run a Lanczos process on $A$ with starting vector $b$ for $K$ iterations, and obtain $A\approx Q_K T_K Q_K^T$, where $T_K\in\mathbb{R}^{K\times K}$, $K\ll N$, is a symmetric tridiagonal matrix and $Q_K^TQ^K=I$. Then we apply the eigenvalue decomposition to $T_K$
\begin{equation}
    T_K = V\Lambda V^T
\end{equation}
Thus $A\approx (Q_K V) \Lambda (Q_K V)^T$. 

We now have
\begin{equation}\label{equ:quad}
    I_s = b^TC f(\Lambda) C^Tb \approx I_w
\end{equation}

Therefore, we can extract the new quadrature rules from \cref{equ:quad} by identifying $\{\lambda_i\}_{i=1}^K=\textrm{diag}(\Lambda)$ as the nodes and $\{s_i\}_{i=1}^K=(C^Tb)^2$ as weights. To justify that the quadrature is indeed effective, we prove the following theorem.

\begin{theorem}
    Assume $|I-I_w|\leq \varepsilon$ and the new quadrature nodes and weights $\{\lambda_i\}_{i=1}^K, \{s_i\}_{i=1}^K$ are obtained as described above. Then for all polynomials $f$ with degree $2K-1$ or less, we have
    \begin{equation}
        I_s = \sum_{i=1}^N f(\lambda_i) s_i = I_w
    \end{equation}
    and therefore
    \begin{equation}
        |I-I_s| \leq \varepsilon
    \end{equation}
\end{theorem}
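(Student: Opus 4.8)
### Proof Proposal

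The plan is to recognize this as the standard Gauss-type quadrature / Golub–Welsch statement, adapted to the Lanczos-generated discrete measure, and to prove exactness by a Krylov-subspace / three-term-recurrence argument rather than by explicit computation. The key observation is that $I_w = b^T f(A) b = \langle b, f(A) b\rangle$ is an inner product against the discrete measure $\mu = \sum_{i=1}^N w_i \delta_{x_i}$, and that running $K$ steps of Lanczos on $(A, b)$ produces exactly the Jacobi matrix $T_K$ of the orthogonal polynomials associated with $\mu$ up to degree $K$. Its eigenpairs $(\lambda_i, v_i)$ then furnish the $K$-point Gaussian quadrature for $\mu$, which integrates polynomials of degree $\le 2K-1$ exactly against $\mu$ — i.e. $I_s = I_w$ on that subspace.

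First I would set up the Lanczos recurrence: with $q_1 = b/\|b\|$ (noting $\|b\|^2 = \sum w_i$), the process generates orthonormal vectors $q_1,\dots,q_K$ spanning the Krylov space $\mathcal{K}_K(A,b) = \mathrm{span}\{b, Ab,\dots,A^{K-1}b\}$, with $A Q_K = Q_K T_K + \beta_K q_{K+1} e_K^T$ and $Q_K^T A Q_K = T_K$. The crucial algebraic fact is that for any polynomial $g$ of degree $\le K-1$, $g(A) b = Q_K\, g(T_K)\, Q_K^T b$, because $g(A)b$ already lies in $\mathcal{K}_K(A,b)$ and the restriction of $A$ to this Krylov space acts as $T_K$ in the $Q_K$-coordinates; more carefully, one shows by induction on the degree that $A^j b = Q_K T_K^j Q_K^T b$ for $0 \le j \le K-1$ (the boundary term $\beta_K q_{K+1}$ only appears at degree $K$). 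Consequently, for $g$ of degree $\le K-1$, using the eigendecomposition $T_K = V\Lambda V^T$ and writing $C = Q_K V$, we get $g(A) b = C\, g(\Lambda)\, C^T b$.

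Next I would handle degree up to $2K-1$ by the standard Gaussian-quadrature splitting. Given a polynomial $f$ of degree $\le 2K-1$, let $\pi_K$ be the $K$-th orthogonal polynomial of the discrete measure $\mu$ — equivalently the characteristic polynomial of $T_K$, so that $\pi_K(T_K) = 0$ and $\pi_K(\lambda_i) = 0$ for all $i$. Polynomial division gives $f = \pi_K\, g + h$ with $\deg g, \deg h \le K-1$. Then $I_w = b^T f(A) b = b^T \pi_K(A) g(A) b + b^T h(A) b$. For the first term, one uses that $\pi_K$ is $\mu$-orthogonal to all polynomials of degree $< K$, which in Lanczos language says $Q_K^T \pi_K(A) q_1 = \|b\|\, Q_K^T \pi_K(A)\, b/\|b\|$ vanishes — more directly, $\pi_K(T_K)=0$ together with the degree-$(K-1)$ representation of $g(A)b$ forces $b^T \pi_K(A) g(A) b = b^T \pi_K(A) C g(\Lambda) C^T b$, and since $C^T b$ has the structure of the Lanczos data and $\pi_K(A)b \perp \mathcal{K}_{K}(A,b)$ one gets this term equals $0$ (alternatively: it equals $(C^T b)^T \pi_K(\Lambda) g(\Lambda) (C^T b) = 0$ because $\pi_K(\Lambda) = 0$, once one establishes $b^T\pi_K(A) g(A) b = (C^Tb)^T \pi_K(\Lambda) g(\Lambda)(C^Tb)$ via the degree-$\le K-1$ identity applied to $g$ and the projection property of $\pi_K$). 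For the second term, $h$ has degree $\le K-1 \le 2K-1$, so by Step 2, $b^T h(A) b = b^T C h(\Lambda) C^T b = \sum_{i=1}^K h(\lambda_i) s_i$ with $s_i = (C^T b)_i^2$; and since $\pi_K(\lambda_i) = 0$, also $f(\lambda_i) = h(\lambda_i)$, so $\sum_i f(\lambda_i) s_i = \sum_i h(\lambda_i) s_i = I_w$. This proves $I_s = I_w$ on polynomials of degree $\le 2K-1$. The final bound $|I - I_s| \le |I - I_w| \le \varepsilon$ is then immediate from the triangle inequality (noting that whether $f$ in \cref{equ:integral} itself has degree $\le 2K-1$ is the hypothesis under which $I_s = I_w$ applies, so the displayed conclusion should be read with that qualification).

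The main obstacle is making the ``$\pi_K$-term vanishes'' step fully rigorous: one needs the clean identification of the Lanczos tridiagonal $T_K$ with the Jacobi matrix of $\mu$ and of its characteristic polynomial with the orthogonal polynomial $\pi_K$, plus the fact that $\pi_K(A) b$ is orthogonal to $\mathcal{K}_K(A,b)$. A subtlety worth flagging is the possibility of early Lanczos breakdown (some $\beta_j = 0$ for $j < K$), which happens precisely when $b$ is a combination of fewer than $K$ eigenvectors of $A$; in that case the quadrature has fewer than $K$ nodes but remains exact for all polynomials (indeed for all functions on the relevant spectral support), so the conclusion still holds — I would dispatch this case with a one-line remark rather than a separate argument. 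Everything else is the textbook Golub–Welsch / Gauss-quadrature exactness proof transcribed into matrix-function notation.
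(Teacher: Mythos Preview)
Your argument is correct, and it is more explicit than the paper's. The paper's own proof is extremely terse: it records that $Ax = C\Lambda C^T x$ for every $x\in\mathcal{K}_K(A,b)$ and then simply asserts that $b^T f(A)b = b^T C f(\Lambda) C^T b$ whenever $\deg f \le 2K-1$, without spelling out how the degree bound doubles from $K-1$ to $2K-1$. The implicit step is the symmetric splitting $b^T A^m b = (A^j b)^T(A^{m-j}b)$ with both exponents at most $K-1$ (or one of them equal to $K$, using $q_{K+1}\perp\mathcal{K}_K(A,b)$), so that each factor is captured by the Krylov identity. Your route is genuinely different in presentation: you identify $T_K$ with the Jacobi matrix of the discrete measure $\mu=\sum_i w_i\delta_{x_i}$, perform the Gaussian-quadrature division $f=\pi_K g+h$ with $\pi_K$ the characteristic polynomial of $T_K$, and kill the $\pi_K g$ term by orthogonality on the $I_w$ side and by $\pi_K(\lambda_i)=0$ on the $I_s$ side. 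This buys a transparent explanation of \emph{why} $2K-1$ is the sharp threshold and naturally surfaces the early-breakdown case you mention; the paper's version is shorter but leaves that reasoning to the reader.

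One small suggestion: your treatment of the $\pi_K$-term in matrix language is more convoluted than it needs to be. The single line $I_w(\pi_K g)=\langle \pi_K,g\rangle_\mu=0$, which follows immediately from $\pi_K$ being the degree-$K$ orthogonal polynomial for $\mu$, already disposes of that term; there is no need to translate it back through $Q_K$ and $C$.
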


Therefore, we reduce the total number of evaluation points for $f(r)$ from $N$ to $K\ll N$ without sacrificing much accuracy~(precisely the same for polynomials with degree $2K-1$ or less).

\begin{proof}
    Let $\mathcal{K}_K(A,b)=\{b, Ab, \ldots, A^{K-1}b\}$ be the Krylov subspace. Then by the property of the Lanczos process,  $\forall x\in \mathcal{K}_K(A,b)$
    \begin{equation}
        Ax = Q_KT_KQ_K^T x = C\Lambda C^Tx
    \end{equation}
    
    If $f$ is a polynomial of degree $2K-1$ or less, we have
    \begin{equation}
        b^Tf(A) b = b^TCf(\Lambda ) C^Tb 
    \end{equation}
    This indicates the new quadrature gives the same value as the original quadrature if the integrand is $f$, i.e., $I_w=I_s$, thus
    \begin{equation}
        |I_s-I| = |I_w-I|\leq \varepsilon
    \end{equation}
\end{proof}

\section{Numerical Experiments}

In this section, we carry out the numerical experiments of the spectral method in 2D and 3D. Both radial and non-radial functions are considered. All these experiments show spectral accuracy for the method.  We will also apply our method to solve fractional diffusion equations. We use the notation $B$ to denote the unit ball in the corresponding dimension.

\subsection{Accuracy of the Fractional Laplacian Estimate}\label{sect:1}

We consider the function
\begin{equation}\label{equ:testcase1}
    \begin{aligned}
        u(\bx) &= (1-|\bx|^2)^{s+\frac{\alpha}{2}} & \bx \in B\ \\
        u(\bx) &= 0 & \bx \in B^c
    \end{aligned}
\end{equation}
where $s=0,1,2,\ldots$, which is in the $C^{s, \frac{\alpha}{2}}$ class. As 
\begin{equation}
    \frac{u(\bx)}{(1-|\bx|^2)^{\frac{\alpha}{2}}} =(1-|\bx|^2)^s
\end{equation}
is a polynomial in $\bx$, it is expected that the orthogonal expansion can represent the solution exactly with finite number of terms. The result is shown in \cref{tab:2d}. We can see that the error demonstrates spectral accuracy for all the cases. The error is measured by computing the fractional Laplacian  using a high degree expansion~(in our case, we use $n=5$) as approximation to $(-\Delta)^{\frac{\alpha}{2}} u$ and then the infimum error is computed by 
\begin{equation}
    \|(-\Delta)^{\frac{\alpha}{2}} u - (-\Delta)^{\frac{\alpha}{2}}_n u\|_\infty
\end{equation}

\begin{table}[htbp]
\centering
\begin{adjustwidth}{-1cm}{}
\begin{tabular}{l|ccccc}
\toprule
  & $n=0$ & $n=1$ & $n=2$ & $n=3$ & $n=4$\\ 
  \midrule
  $s=0$ & 4.5895{\footnotesize$\times 10^{-11}$} & 4.4276{\footnotesize $\times 10^{-11}$} & 4.0085{\footnotesize $\times 10^{-11}$} & 3.2115{\footnotesize $\times 10^{-11}$} & 1.9156{\footnotesize $\times 10^{-11}$}\\
$s=1$ & 1.0086 & 5.0678{\footnotesize $\times 10^{-12}$} & 4.5871{\footnotesize $\times 10^{-12}$} & 3.6737{\footnotesize $\times 10^{-12}$} & 2.1904{\footnotesize $\times 10^{-12}$}\\
$s=2$ & 1.187 & 0.46785 & 3.6082{\footnotesize $\times 10^{-16}$} & 5.5511{\footnotesize $\times 10^{-17}$} & 1.6653{\footnotesize $\times 10^{-16}$}\\
$s=3$ & 1.4283 & 0.63502 & 0.17675 & 2.2204{\footnotesize $\times 10^{-16}$} & 2.2204{\footnotesize $\times 10^{-16}$}\\
\midrule
\end{tabular}

\medskip

\begin{tabular}{l|ccccc}
  & $n=0$ & $n=1$ & $n=2$ & $n=3$ & $n=4$\\ 
  \midrule
  $s=0$ & 3.7748{\footnotesize $\times 10^{-15}$} & 3.7748{\footnotesize $\times 10^{-15}$} & 4.2188{\footnotesize $\times 10^{-15}$} & 3.9968{\footnotesize $\times 10^{-15}$} & 5.5511{\footnotesize $\times 10^{-15}$}\\
$s=1$ & 2.1206 & 2.2204{\footnotesize $\times 10^{-15}$} & 1.7764{\footnotesize $\times 10^{-15}$} & 1.9984{\footnotesize $\times 10^{-15}$} & 4.4409{\footnotesize $\times 10^{-16}$}\\
$s=2$ & 2.272 & 1.3148 & 2.1649{\footnotesize $\times 10^{-15}$} & 2.6645{\footnotesize $\times 10^{-15}$} & 2.5535{\footnotesize $\times 10^{-15}$}\\
$s=3$ & 2.9125 & 1.5107 & 0.61323 & 3.2196{\footnotesize $\times 10^{-15}$} & 2.6923{\footnotesize $\times 10^{-15}$}\\
\midrule
\end{tabular}

\medskip

\begin{tabular}{l|ccccc}
  & $n=0$ & $n=1$ & $n=2$ & $n=3$ & $n=4$\\ 
  \midrule
  $s=0$ & 1.5289{\footnotesize $\times 10^{-10}$} & 1.5196{\footnotesize $\times 10^{-10}$} & 1.4687{\footnotesize $\times 10^{-10}$} & 1.3012{\footnotesize $\times 10^{-10}$} & 8.8244{\footnotesize $\times 10^{-11}$}\\
$s=1$ & 4.656 & 3.1748{\footnotesize $\times 10^{-12}$} & 3.066{\footnotesize $\times 10^{-12}$} & 2.7209{\footnotesize $\times 10^{-12}$} & 1.843{\footnotesize $\times 10^{-12}$}\\
$s=2$ & 4.6339 & 3.7373 & 1.1657{\footnotesize $\times 10^{-14}$} & 1.0991{\footnotesize $\times 10^{-14}$} & 9.77{\footnotesize $\times 10^{-15}$}\\
$s=3$ & 6.3058 & 4.1131 & 2.1158 & 1.2046{\footnotesize $\times 10^{-14}$} & 1.2879{\footnotesize $\times 10^{-14}$}\\
\bottomrule
\end{tabular}
\end{adjustwidth}
\caption{Numerical fractional Laplacian of \cref{equ:testcase1} for 2D. The corresponding $\alpha$ are 0.5, 1.0, 1.5 respectively. We can see that $n+1=s+1$ terms are need to compute the fractional Laplacian exactly.}
\label{tab:2d}

\end{table}

\begin{table}[htbp]
\centering

\begin{adjustwidth}{-1cm}{}
    
\begin{tabular}{l|ccccc}
\toprule
  & $n=0$ & $n=1$ & $n=2$ & $n=3$ & $n=4$\\ 
  \midrule
  $s=0$ & 5.1471{\footnotesize $\times 10^{-11}$} & 4.934{\footnotesize $\times 10^{-11}$} & 4.4336{\footnotesize $\times 10^{-11}$} & 3.5251{\footnotesize $\times 10^{-11}$} & 3.3248{\footnotesize $\times 10^{-11}$}\\
$s=1$ & 1.0574 & 5.6452{\footnotesize $\times 10^{-12}$} & 5.0717{\footnotesize $\times 10^{-12}$} & 4.0309{\footnotesize $\times 10^{-12}$} & 3.8007{\footnotesize $\times 10^{-12}$}\\
$s=2$ & 1.5068 & 0.50506 & 1.3323{\footnotesize $\times 10^{-15}$} & 8.8818{\footnotesize $\times 10^{-16}$} & 8.8818{\footnotesize $\times 10^{-16}$}\\
$s=3$ & 1.7771 & 0.92778 & 0.19824 & 6.6613{\footnotesize $\times 10^{-16}$} & 4.4409{\footnotesize $\times 10^{-16}$}\\
\midrule
\end{tabular}

\medskip

\begin{tabular}{l|ccccc}
  & $n=0$ & $n=1$ & $n=2$ & $n=3$ & $n=4$\\ 
  \midrule
  $s=0$ & 6.4393{\footnotesize $\times 10^{-15}$} & 7.1054{\footnotesize $\times 10^{-15}$} & 6.2172{\footnotesize $\times 10^{-15}$} & 7.5495{\footnotesize $\times 10^{-15}$} & 7.1054{\footnotesize $\times 10^{-15}$}\\
$s=1$ & 2 & 6.2172{\footnotesize $\times 10^{-15}$} & 5.7732{\footnotesize $\times 10^{-15}$} & 4.885{\footnotesize $\times 10^{-15}$} & 2.3315{\footnotesize $\times 10^{-15}$}\\
$s=2$ & 3.125 & 1.125 & 1.8319{\footnotesize $\times 10^{-15}$} & 2.4147{\footnotesize $\times 10^{-15}$} & 1.3323{\footnotesize $\times 10^{-15}$}\\
$s=3$ & 3.9375 & 2.1875 & 0.5 & 1.8041{\footnotesize $\times 10^{-15}$} & 4.4409{\footnotesize $\times 10^{-16}$}\\\midrule
\end{tabular}

\medskip

\begin{tabular}{l|ccccc}
  & $n=0$ & $n=1$ & $n=2$ & $n=3$ & $n=4$\\ 
  \midrule
  $s=0$ & 1.8478{\footnotesize $\times 10^{-10}$} & 1.8328{\footnotesize $\times 10^{-10}$} & 1.7625{\footnotesize $\times 10^{-10}$} & 1.5488{\footnotesize $\times 10^{-10}$} & 1.0392{\footnotesize $\times 10^{-10}$}\\
$s=1$ & 4.6974 & 3.8276{\footnotesize $\times 10^{-12}$} & 3.6797{\footnotesize $\times 10^{-12}$} & 3.2374{\footnotesize $\times 10^{-12}$} & 2.16{\footnotesize $\times 10^{-12}$}\\
$s=2$ & 6.8389 & 3.3634 & 6.3283{\footnotesize $\times 10^{-15}$} & 5.0515{\footnotesize $\times 10^{-15}$} & 1.1102{\footnotesize $\times 10^{-16}$}\\
$s=3$ & 9.1689 & 5.3726 & 1.7956 & 1.7764{\footnotesize $\times 10^{-15}$} & 1.138{\footnotesize $\times 10^{-15}$}\\
\bottomrule
\end{tabular}
\end{adjustwidth}

\caption{Numerical fractional Laplacian of \cref{equ:testcase1} for 3D. The corresponding $\alpha$ are 0.5, 1.0, 1.5 respectively. We can see that $n+1=s+1$ terms are need to compute the fractional Laplacian exactly.}
\end{table}

In addition, we consider the function
\begin{equation}\label{equ:testcase2}
    \begin{aligned}
        u(\bx) &= (1-|\bx|^2)^{s} & \bx \in B\ \\
        u(\bx) &= 0 &\bx \in B^c
    \end{aligned}
\end{equation}
where $s=1,2,\ldots$, which is in the $C^{s, 0}$ class. 
\[
\frac{u(\bx)}{(1-|\bx|^2)^{\frac{\alpha}{2}}} =(1-|\bx|^2)^{s-\frac{\alpha}{2}}
\]
cannot be exactly represented by a finite sum of the orthogonal system. It is interesting to look at the coefficients $c^n_{00}$ corresponding to $u(\bx)$. \Cref{fig:coefs} shows the case $s=1$, $\alpha=0.5$, and we see an algebraic decay of the coefficients. The decay is not exponential but algebraic due to the fractional nature of $\frac{u(\bx)}{(1-|\bx|^2)^{\frac{\alpha}{2}}}$. 

\begin{figure}[htbp]
\centering
\includegraphics[width=0.6\textwidth]{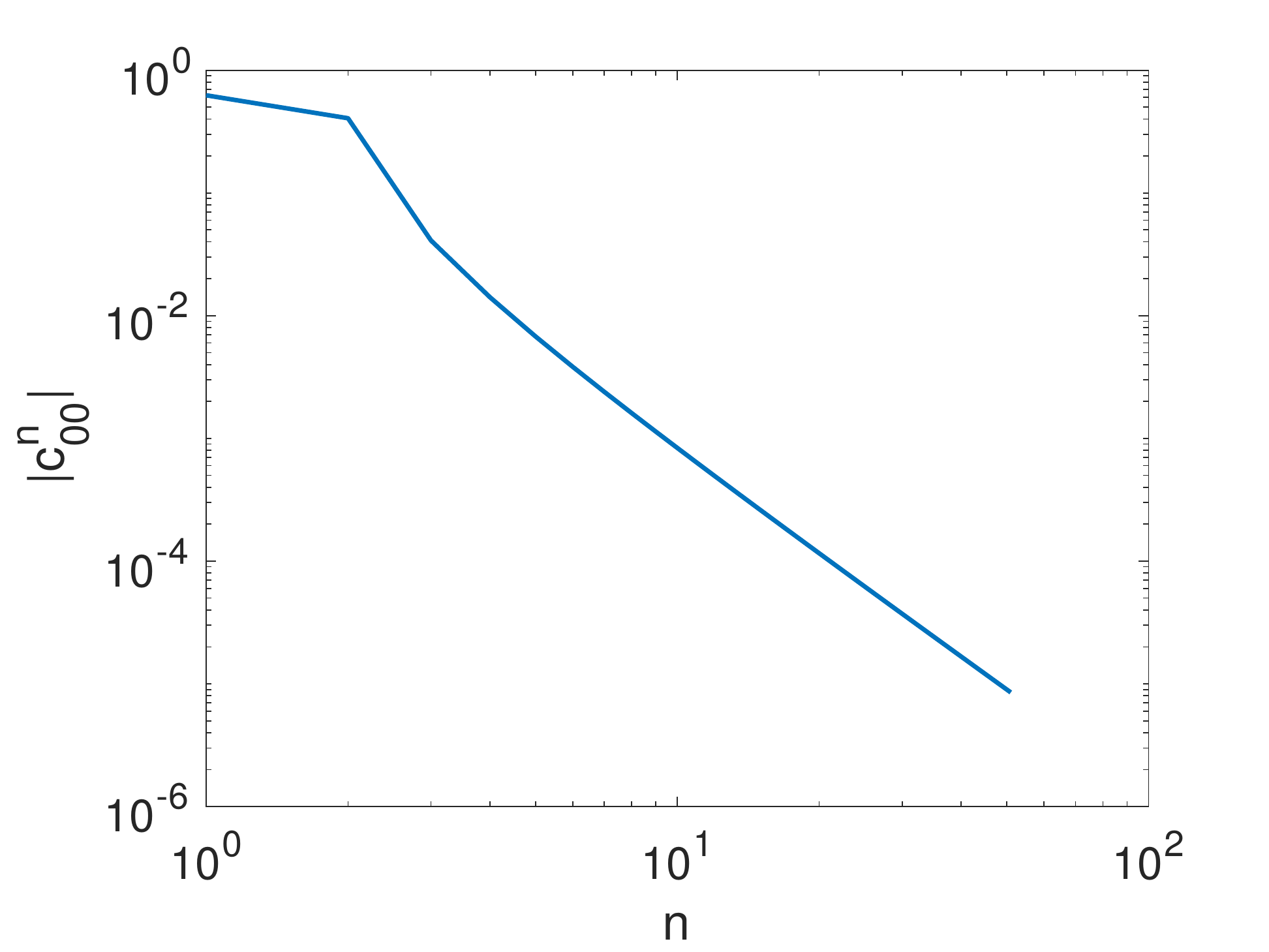}
\caption{Algebraic decay of coefficients $c_{00}^n$ for $u(\bx) = 1-|\bx|^2$ in 2D.}
\label{fig:coefs}
\end{figure}

We can still expect spectral accuracy as is shown in \cref{fig:ex}. In this experiment, we choose the numerical result of $n=50$ as the reference solution. We can see in all cases the solution convergences exponentially with respect to $n$. In the case $s=0$, the convergence is slow as the fractional power $(1-|\bx|^2)^{1-\frac{\alpha}{2}}$ where $1-\frac{\alpha}{2}<1$ is difficult to approximate with integer power polynomials.

\begin{figure}[htbp]
\centering
\includegraphics[width=0.45\textwidth]{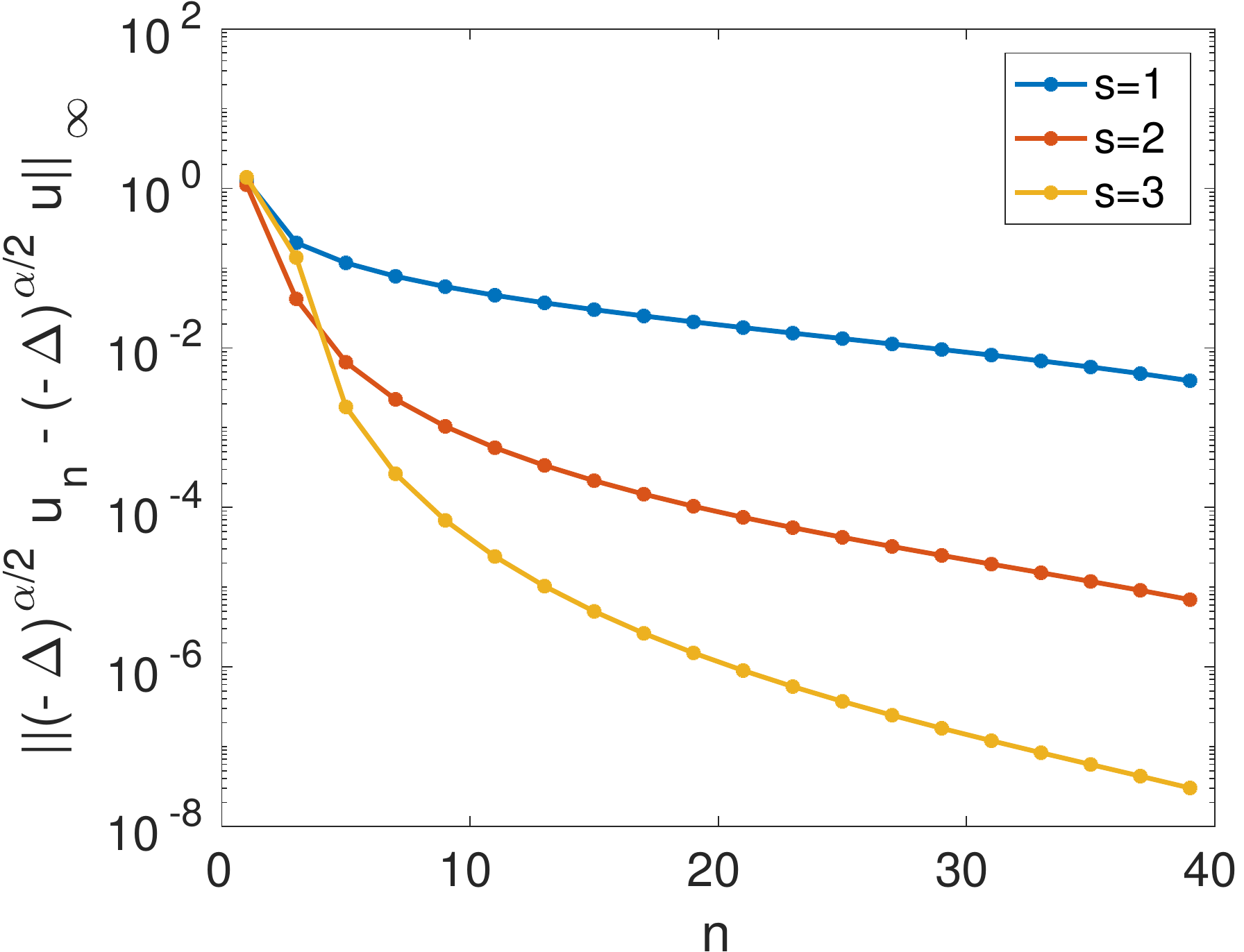}
\includegraphics[width=0.45\textwidth]{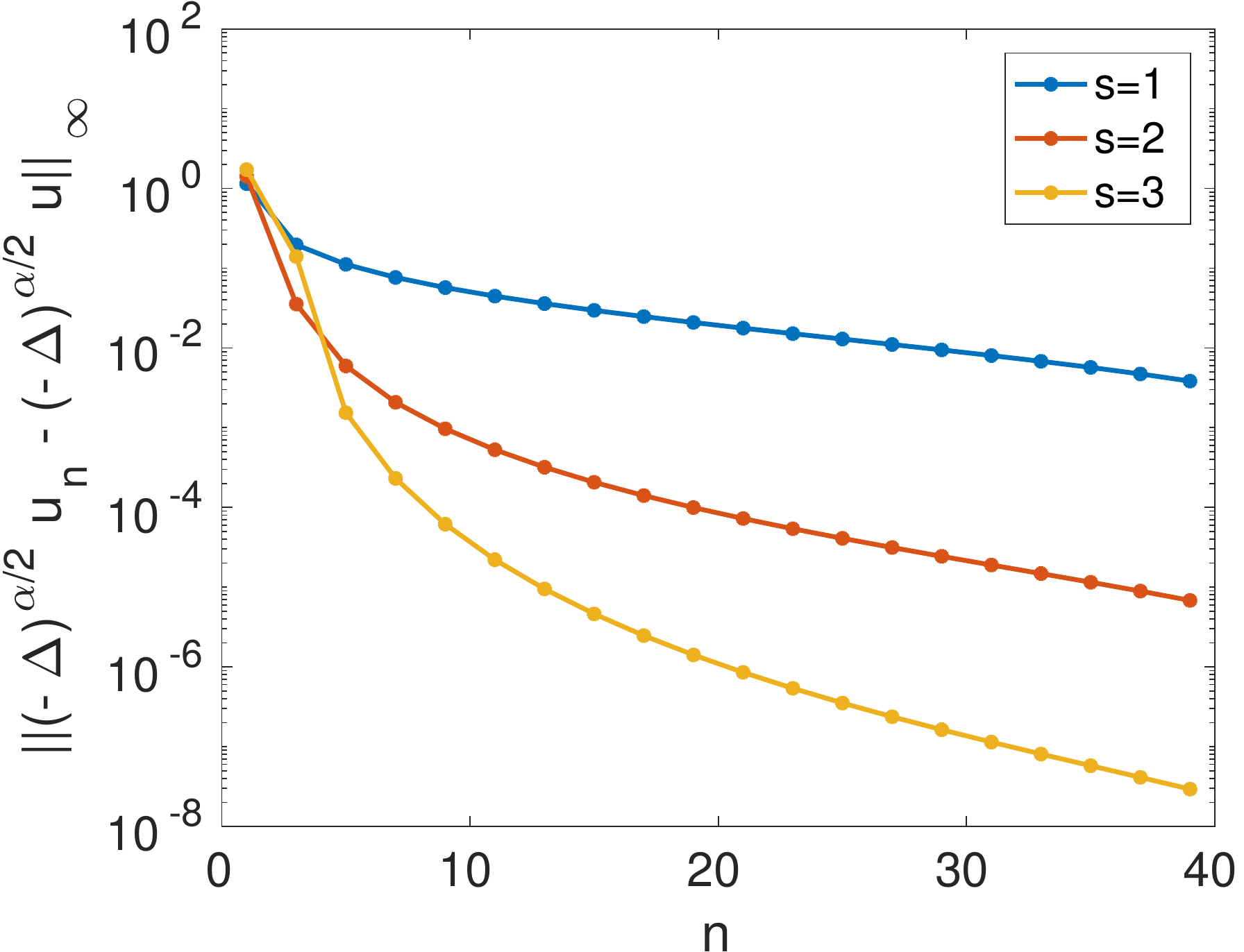}

\includegraphics[width=0.45\textwidth]{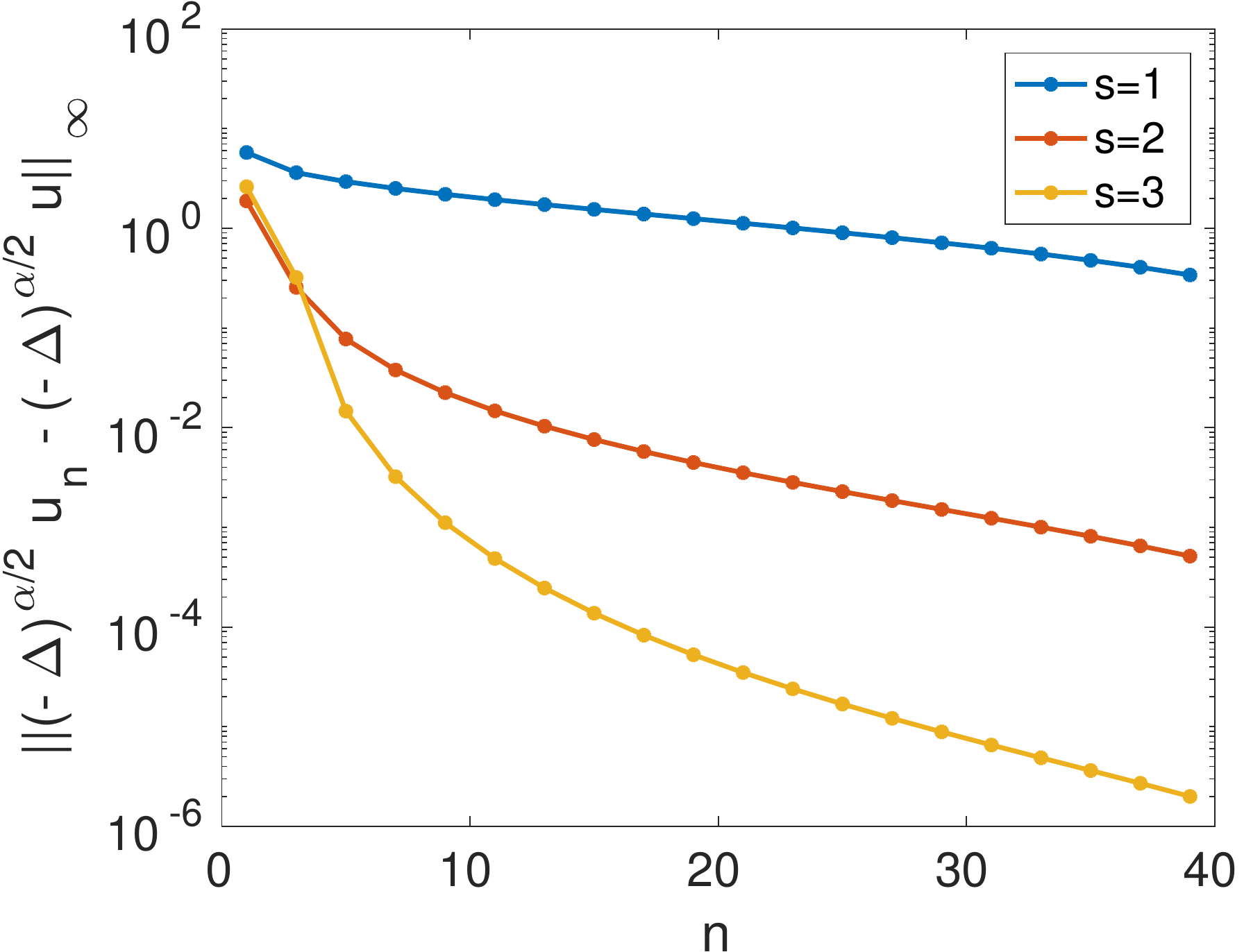}
\includegraphics[width=0.45\textwidth]{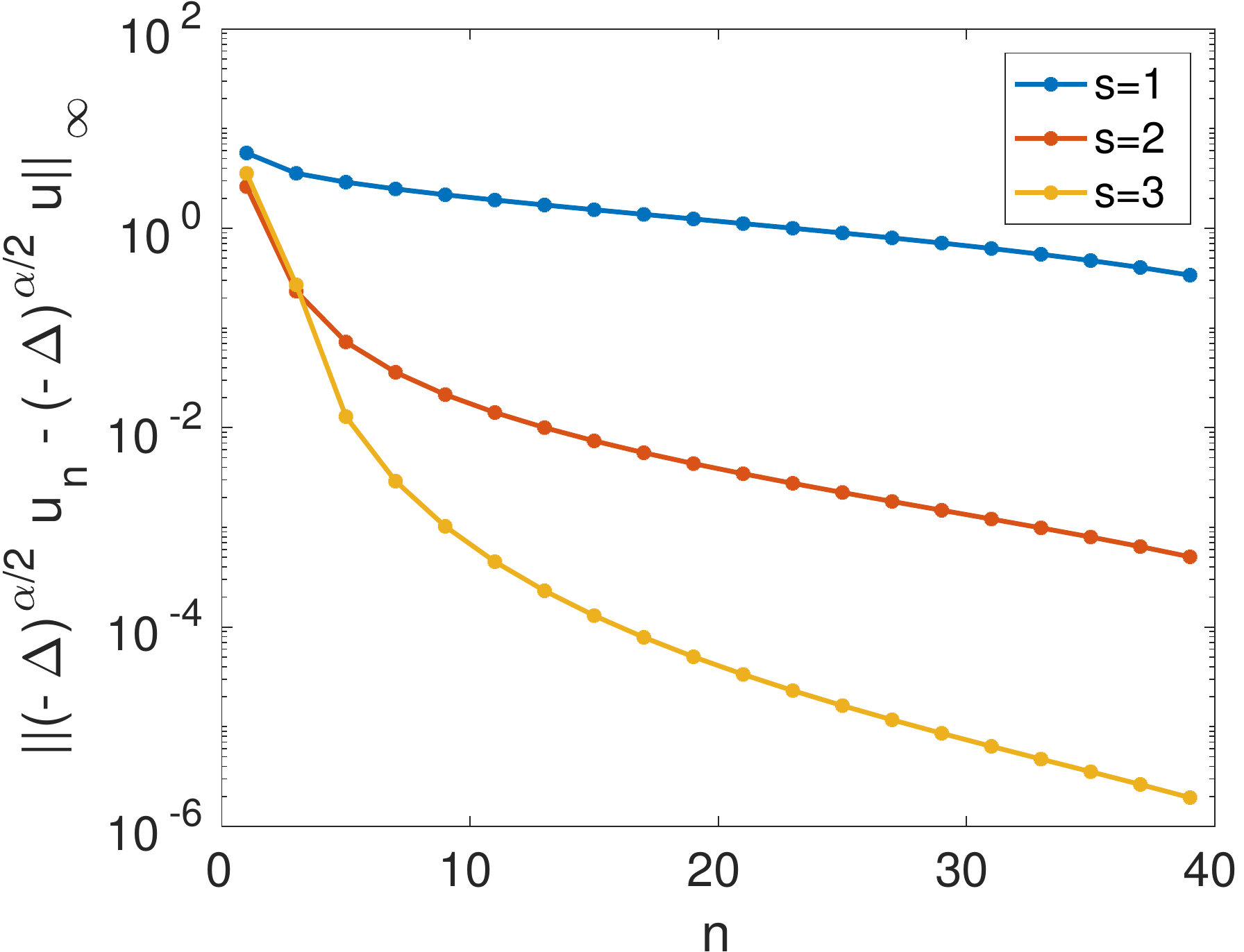}

\includegraphics[width=0.45\textwidth]{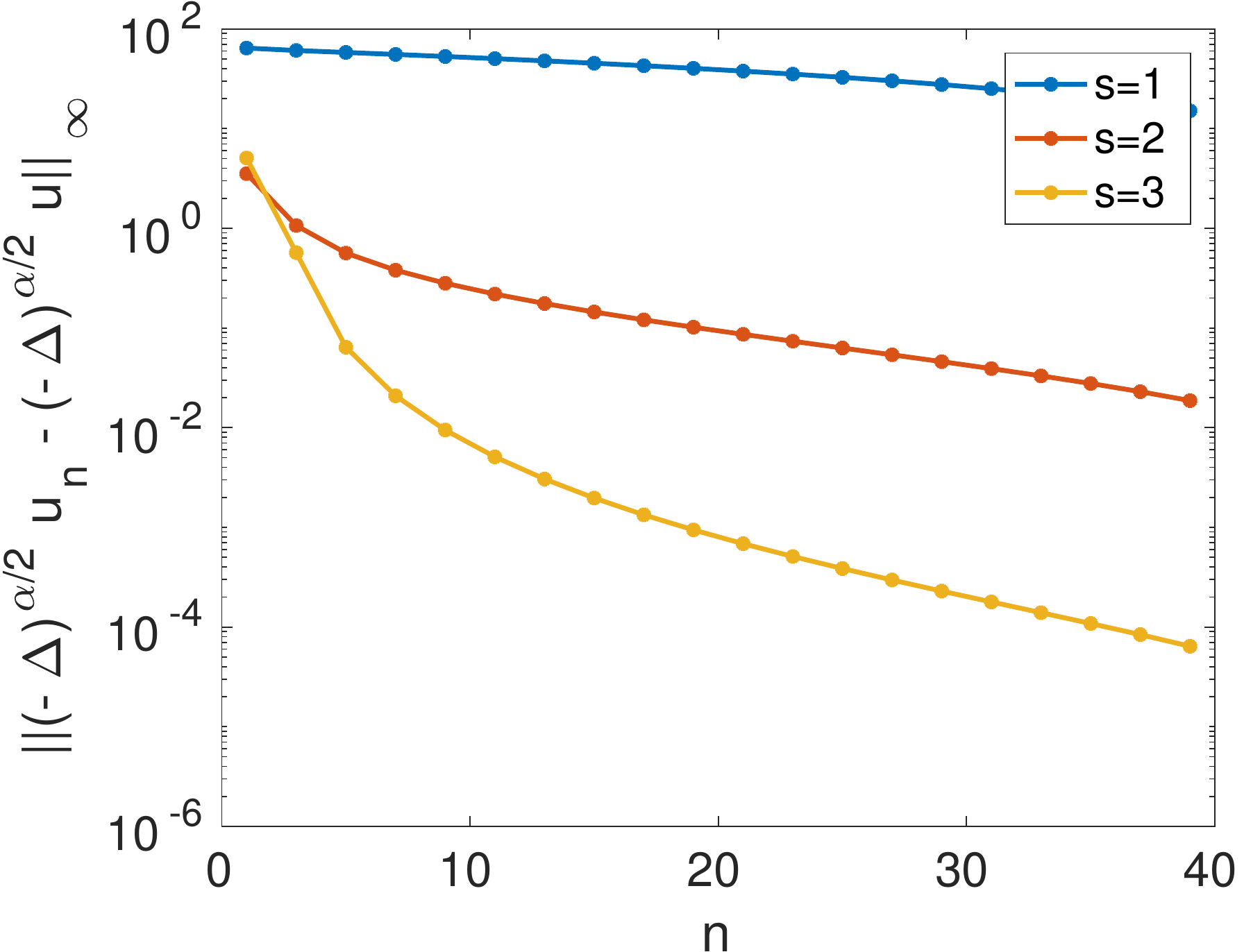}
\includegraphics[width=0.45\textwidth]{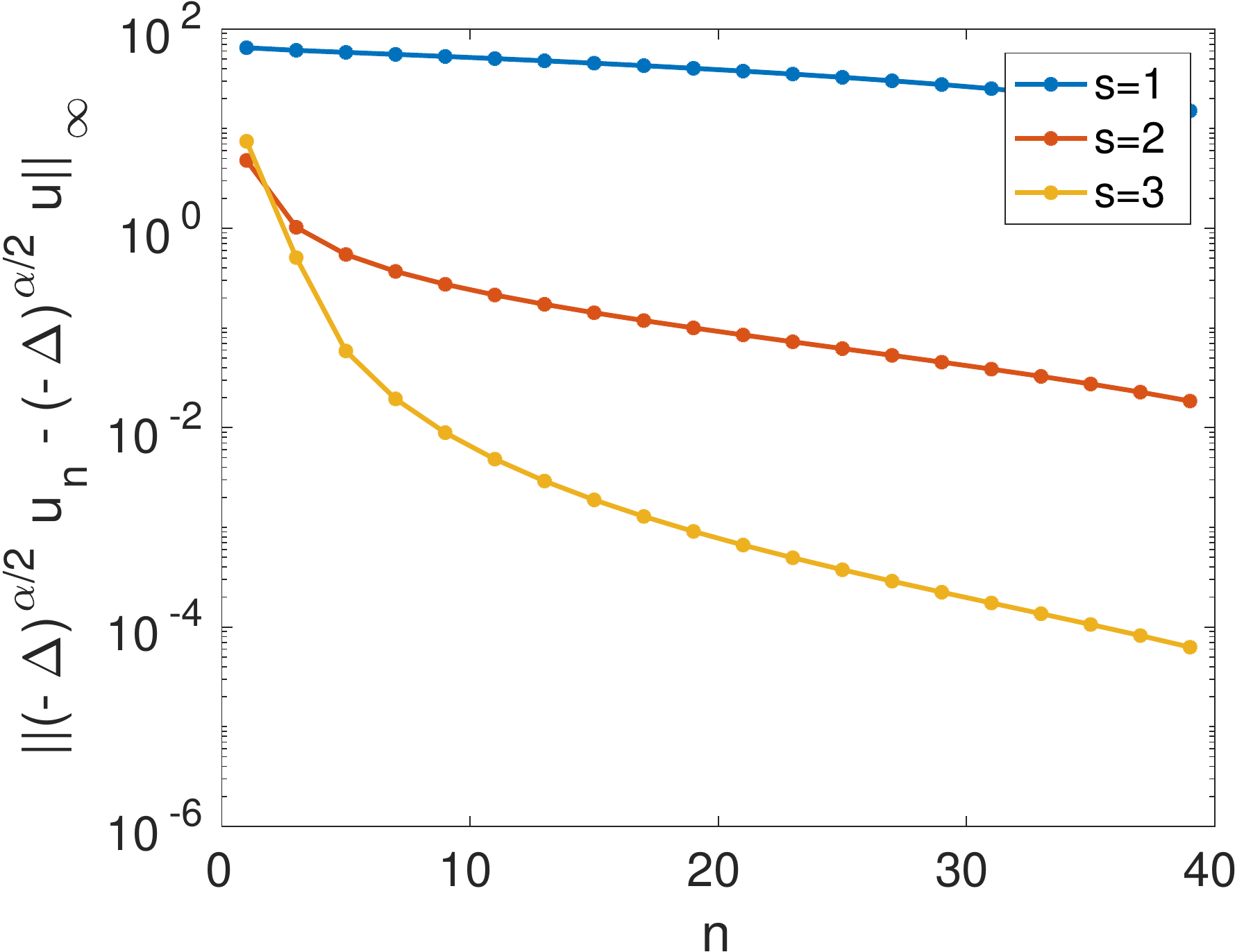}

\caption{Convergence plot for \cref{equ:testcase2}. Left column: 2D. Right column: 3D. First row: $\alpha=0.5$. Second row: $\alpha=1.0$. Third row: $\alpha=1.5$. We can see in all cases the solution convergences exponentially with respect to $n$. In the case $s=1$, the convergence is slow and the error is relatively large compared to other cases. In this case, even the first order derivative  of the function $(1-|\bx|^2)^{1-\frac{\alpha}{2}}$ is infinite on the boundary $|\bx|=1$ so polynomial approximation is not appropriate here. Therefore the slow convergence and relatively large error are expected.}
\label{fig:ex}
\end{figure}

\subsection{Solving the Fractional Poisson Equation}\label{sect:2}

In this section, we solve the homogeneous fractional Poisson equation using our spectral methods. We demonstrate the effectiveness of the method for non-radial solutions. 2D results will be provided in this section. 

The homogeneous fractional Poisson equation has been used as the benchmark problem for fractional Laplacian computation. The problem is
\begin{equation}
    \begin{aligned}
       (-\Delta)^{\frac\alpha2} u(\bx) &= f(\bx) & \bx \in B\ \\
        u(\bx) &= 0 &\bx \in B^c
    \end{aligned}
\end{equation}
where $f(\bx)\in C^k(\bar B)$ for some $k\in\mathbb{N}$. There are some known analytical solutions to the problem for specific $f(\bx)$. In our numerical experiments, we consider the cases shown in \cref{tab:foureqs}.

\begin{table}[htpb]
\centering
\begin{tabular}{l|c}
\toprule
  $u(\bx)$ in the ball&$(-\Delta)^{\frac{\alpha}{2}} u$ in the ball \\
  \midrule
 $(1-|\bx|^2)^{\frac{\alpha}{2}}$ & ${2^\alpha }\Gamma \left( {{\alpha  \over 2} + 1} \right)\Gamma \left( {{{d + \alpha } \over 2}} \right)\Gamma {\left( {{d \over 2}} \right)^{ - 1}}$ \\[4pt]
 $(1-|\bx|^2)^{\frac{\alpha}{2}+1}$ &${2^\alpha }\Gamma \left( {{\alpha  \over 2} + 2} \right)\Gamma \left( {{{d + \alpha } \over 2}} \right)\Gamma {\left( {{d \over 2}} \right)^{ - 1}}\left[ {1 - \left( {1 + {\alpha  \over d}} \right)|\bx|{^2}} \right]$  \\[4pt]
$(1-|\bx|^2)^{\frac{\alpha}{2}}\bx_d$ & ${2^\alpha }\Gamma \left( {{\alpha  \over 2} + 1} \right)\Gamma \left( {{{d + \alpha } \over 2} + 1} \right)\Gamma {\left( {{d \over 2} + 1} \right)^{ - 1}}{\bx_d}$\\[4pt]
$(1-|\bx|^2)^{\frac{\alpha}{2}+1}\bx_d$ & ${2^\alpha }\Gamma \left( {{\alpha  \over 2} + 2} \right)\Gamma \left( {{{d + \alpha } \over 2} + 1} \right)\Gamma {\left( {{d \over 2} + 1} \right)^{ - 1}}\left[ {1 - \left( {1 + {\alpha  \over {d + 2}}} \right)|\bx|{^2}} \right]\bx_d$\\
  \bottomrule
\end{tabular}
\caption{Several functions whose Laplacian can be computed analytically. The third and fourth equations are not radial.}
\label{tab:foureqs}
\end{table}

Note the third and fourth equations are not radial symmetric; this example also demonstrates the effectiveness of the method for non-radial functions~(\cref{fig:nonradial}). The error is shown in \cref{tab:four}. We see that the method is quite accurate and efficient for the problem. 


\begin{figure}[htbp]
\centering
\includegraphics[width=0.8\textwidth]{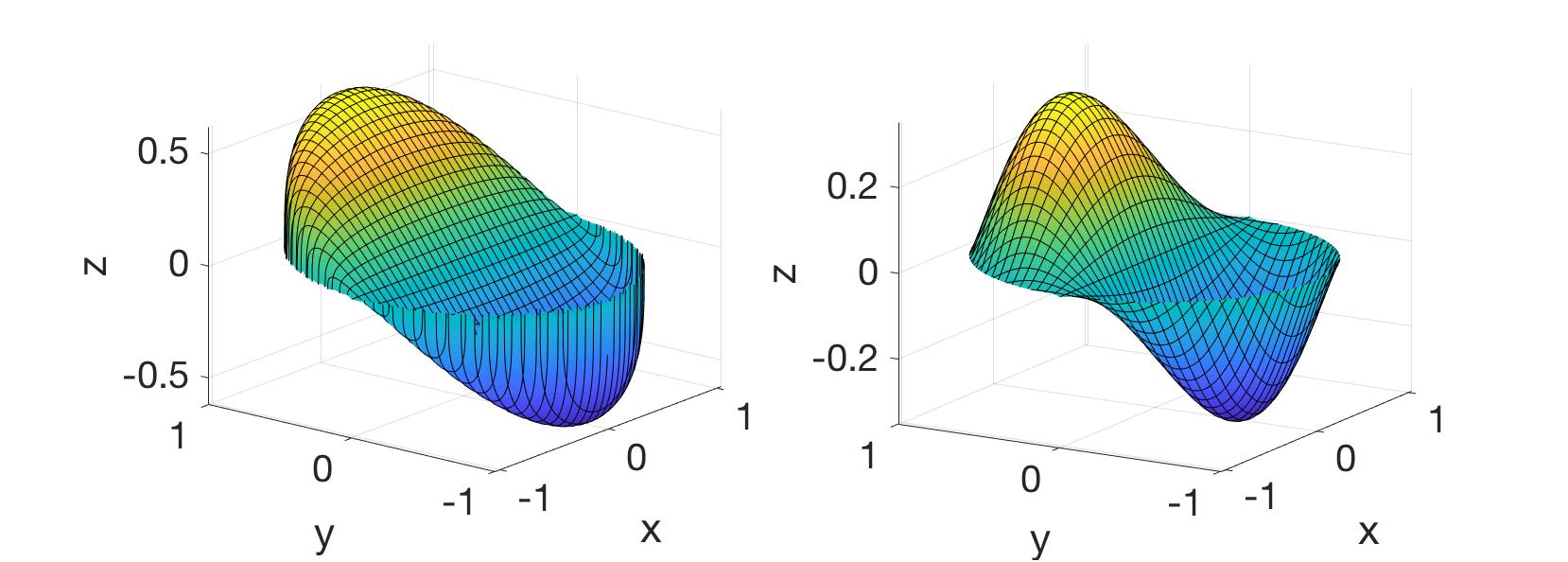}
\caption{Non-radial functions used as test cases eq.~3 and eq.~4 in \cref{tab:foureqs}.}
\label{fig:nonradial}
\end{figure}

\begin{table}[htbp]
\centering
\begin{tabular}{l|cccc}
\toprule
  Equation ($\alpha=0.5$) & $n=0$ & $n=1$ & $n=2$ \\
  \midrule
 eq.~1~$(L=0)$ & 0.0000  & 4.5652{\footnotesize $\times 10^{-13}$}  & 3.3995{\footnotesize $\times 10^{-13}$} \\
 eq.~2~$(L=0)$ & 2.5647{\footnotesize $\times 10^{-1}$}  & 9.9838{\footnotesize $\times 10^{-10}$}  & 9.9811{\footnotesize $\times 10^{-10}$} \\
eq.~3~$(L=1)$  & 2.3870{\footnotesize $\times 10^{-15}$}  & 1.6488{\footnotesize $\times 10^{-7}$}  & 2.4919{\footnotesize $\times 10^{-7}$} \\
eq.~4~$(L=1)$   & 1.7506{\footnotesize $\times 10^{-1}$}  & 1.0946{\footnotesize $\times 10^{-7}$}  & 1.3067{\footnotesize $\times 10^{-7}$} \\
\midrule
\end{tabular}

\medskip

\begin{tabular}{l|cccc}
  Equation ($\alpha=1.0$) & $n=0$ & $n=1$ & $n=2$ \\
  \midrule
 eq.~1~$(L=0)$ &   0.0000  & 0.0000  & 0.0000 \\
 eq.~2~$(L=0)$ &  1.7877{\footnotesize $\times 10^{-1}$}  & 4.3368{\footnotesize $\times 10^{-17}$}  & 3.6429{\footnotesize $\times 10^{-17}$} \\
eq.~3~$(L=1)$  &   1.0478{\footnotesize $\times 10^{-15}$}  & 1.2869{\footnotesize $\times 10^{-8}$}  & 1.5686{\footnotesize $\times 10^{-8}$} \\
eq.~4~$(L=1)$    & 1.5003{\footnotesize $\times 10^{-1}$}  & 1.2148{\footnotesize $\times 10^{-8}$}  & 1.6127{\footnotesize $\times 10^{-8}$} \\
\midrule
\end{tabular}

\medskip

\begin{tabular}{l|cccc}
  Equation ($\alpha=1.5$) & $n=0$ & $n=1$ & $n=2$ \\
  \midrule
 eq.~1~$(L=0)$ & 0.0000  & 2.3648{\footnotesize $\times 10^{-14}$}  & 1.2768{\footnotesize $\times 10^{-14}$} \\
 eq.~2~$(L=0)$  & 1.3717{\footnotesize $\times 10^{-1}$}  & 6.7892{\footnotesize $\times 10^{-12}$}  & 6.8239{\footnotesize $\times 10^{-12}$} \\
eq.~3~$(L=1)$   & 2.3870{\footnotesize $\times 10^{-15}$}  & 1.6488{\footnotesize $\times 10^{-7}$}  & 2.4919{\footnotesize $\times 10^{-7}$} \\
 eq.~4~$(L=1)$   & 1.7506{\footnotesize $\times 10^{-1}$}  & 1.0946{\footnotesize $\times 10^{-7}$}  & 1.3067{\footnotesize $\times 10^{-7}$} \\
\bottomrule
\end{tabular}
\caption{Error for test functions in \cref{tab:foureqs}. We have used different $n$ in \cref{equ:fc} and \cref{equ:ftou}. We also tried different $L$ (i.e., $c_{l0}^n$, $c_{l1}^n$, $l\geq L$ are all computed). For the first equation, only $c^0_{00}$ is needed to represent the solution and therefore it converges for all choices of $n$. For the second equation, $c^0_{00}$, $c^1_{00}$ are needed and we see convergence for $n\geq 1$. For the third and fourth equation, due to the presence of $\bx_2$, $L\geq 1$ is required to represent $\bx_2=r\sin\theta$. Note that for eq.~3, $L=1$, the error may become larger for $n\geq 1$ because of the roundoff error from the integration. For example, in the case of eq.~3, $\alpha=0.5$, $n=3$, the coefficient corresponding to $p_{1,1,2}$, i.e., $c^2_{1,1}$ should be $0$. Due to the roundoff error, we lose this orthogonality in the integration, and obtain a numerical coefficient $c^2_{1,1}\approx 2.8980\times 10^{-7}$. This error contributes to the error $2.4919\times 10^{-7}$ in the table.
}
\label{tab:four}
\end{table}

We also apply the method to some other functions. Let
\begin{equation}
    f(\bx) =|\bx|^2 \cos(16 |\bx|)
\end{equation}
which has high oscillations. The error is measured by first computing the solution $u$ using high degree $N=40$ as an approximation to the true solution. And for solutions $u_n$ using degree $n\ll N$, the error is computed by $\|u_n-u\|_\infty$. The solution and convergence plot are shown in \cref{fig:fx}.

\begin{figure}[htbp]
\centering
\includegraphics[width=1.0\textwidth]{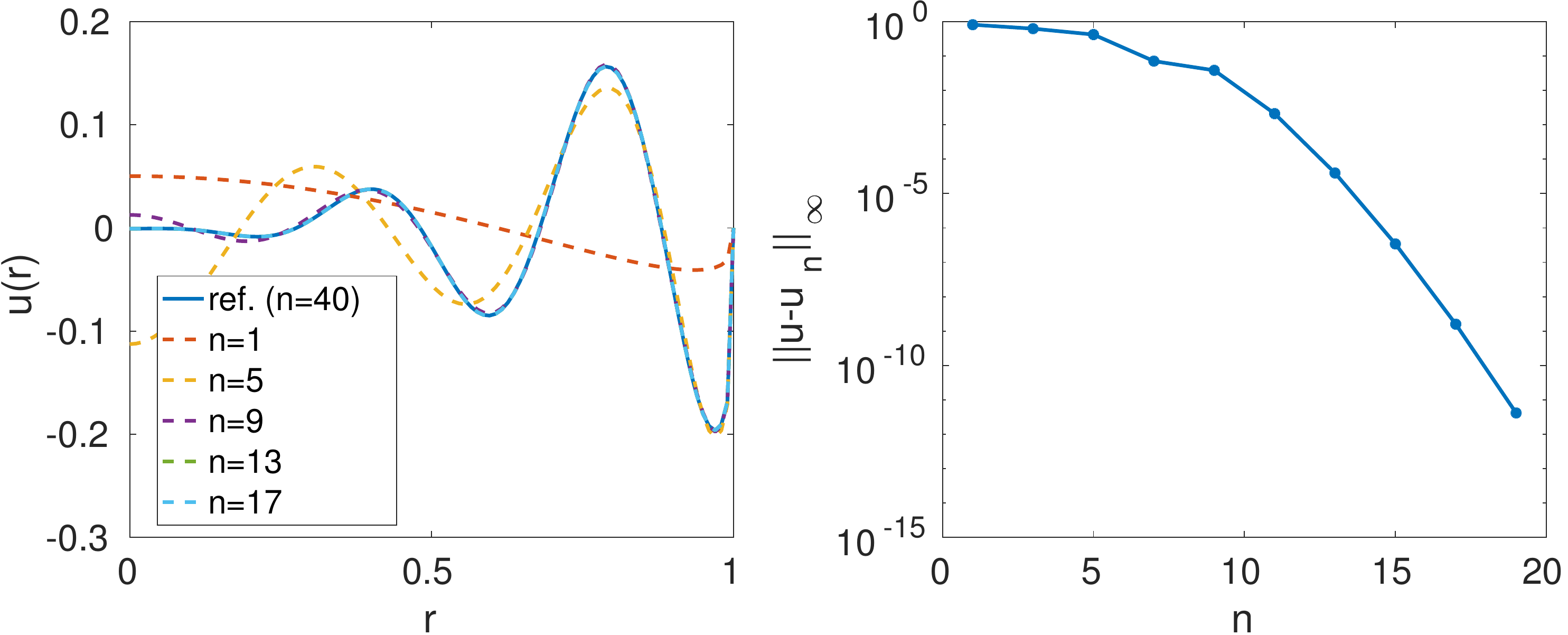}
\caption{Solution profile and convergence plot. We see that the infimum error $\|u-u_n\|_\infty$ decays exponentially as $n$ increases.}
\label{fig:fx}
\end{figure}

\subsection{Solving the Fractional Diffusion Equation}\label{sect:3}

In this subsection, we demonstrate the efficiency of the algorithm by solving the fractional diffusion equation
\begin{equation}\label{equ:diffusion}
    \begin{aligned}
        \partial_t u(\bx,t) &= -(-\Delta)^{\frac{\alpha}{2}} u, \; \bx\in B, \;  t>0\\
        u(\bx,t) &= 0, \; \bx\in B^c, \; t\geq 0\\
        u(\bx,0) &= (1-|\bx|^2)^{\alpha/2}, \; x\in B
    \end{aligned}
\end{equation}
where $B=B_0^3(1)$ is the unit ball in $\mathbb{R}^3$. Note as the initial condition is radial symmetric, the solution is also radial; for visualization, we plot the value $u(|\bx|) = u(|\bx|,1)$ for $\bx=(r,0,0), r\in(0,1)$ for different choices of $\Delta t$. 

As indicated in the error analysis, the spectral method can be very accurate, so typically we can guarantee a small error for the spatial variable. The error caused by the temporal discretization will dominate the total error. We can demonstrate that by looking at the time convergence rate, which is usually $\mathcal{O}(\Delta t)$ for the explicit Euler method.

In our example, the factor $(1-|\bx|^2)^{\alpha/2}$ in the initial condition is consistent with the regularity result for the fractional Laplacian: as $(-\Delta)^{\alpha/2}u(\bx,0)$, $\forall \bx\in B$ exists and $u(\bx,0)=0,\forall \bx\in B^c$, $\frac{u}{\delta^{\alpha/2}}\big|_\Omega$ can be continuously extended to $\bar \Omega$.

We will use the implicit time stepping method for \cref{equ:diffusion}. Let $u^k$ denotes $u(\bx, t_k)$ where $t_k=k\Delta t$, $k=$ 0, 1, 2,\ldots, we have the discretized scheme
\begin{equation}\label{equ:df}
    \frac{u^{k+1}-u^k}{\Delta t} = -(-\Delta)^{\alpha/2} u^k
\end{equation}

Plug 
\begin{equation}
    u^k = \sum_{m=0}^{N-1} c_m^{k+1} d_{m,0}^{\alpha,3}(1-r^2)^{\alpha/2} P_m^{\left( {{\alpha  \over 2},{1 \over 2}} \right)}(2{r^2} - 1)
\end{equation} 
into \cref{equ:df}, and multiply both sides by 
$$P_n^{\left( {{\alpha  \over 2},{1 \over 2}} \right)}(2{r^2} - 1) r^2$$
then integrate over $(0,1)$, we then have
\begin{equation}
    (I + \Delta t B^{-1}AD) c^{k+1} = c^k
\end{equation}
where
\begin{align}
    B &= \mathrm{diag}\Bigg(\left\{ \int_0^1 {{{(1 - {r^2})}^{\alpha /2}}\left(P_m^{\left( {{\alpha  \over 2},{1 \over 2}} \right)}(2{r^2} - 1)\right)^2{r^2}dr}  \right\}_{m=0}^{N-1}\Bigg)\\
    A &= {\left\{ {\int_0^1 {P_m^{\left( {{\alpha  \over 2},{1 \over 2}} \right)}(2{r^2} - 1)P_n^{\left( {{\alpha  \over 2},{1 \over 2}} \right)}(2{r^2} - 1){r^2}dr} } \right\}_{m,n}}\\
    D &= \mathrm{diag}\Big(\left\{ d_{m,0}^{\alpha,3}  \right\}_{m=0}^{N-1}\Big)\\
    c^k &= \{c^k_n\}_{n=0}^{N-1}
\end{align}

$c^0$ is computed by expand $u(\bx,0)$ with a truncated series \cref{equ:uexpansion3}. In our experiment, we use $N=10$.

The stability of the scheme is easily established by observing that $B$, $D$ are diagonal matrices with positive entries. In addition, $A$ is symmetric positive definite by observing that it is the Gram matrix of the vector
$$\left\{ {P_n^{\left( {{\alpha  \over 2},{1 \over 2}} \right)}(2{r^2} - 1)} \right\}_{n = 0}^{N - 1}$$
with respect to $L^2(w)$ where the weight function is $w=r^2$. A standard argument leads to the $\mathcal{O}(\Delta t)$ convergence, which is also demonstrated numerically in \cref{fig:diffusion}.

\begin{figure}[htbp]
\centering
\includegraphics[width=0.45\textwidth]{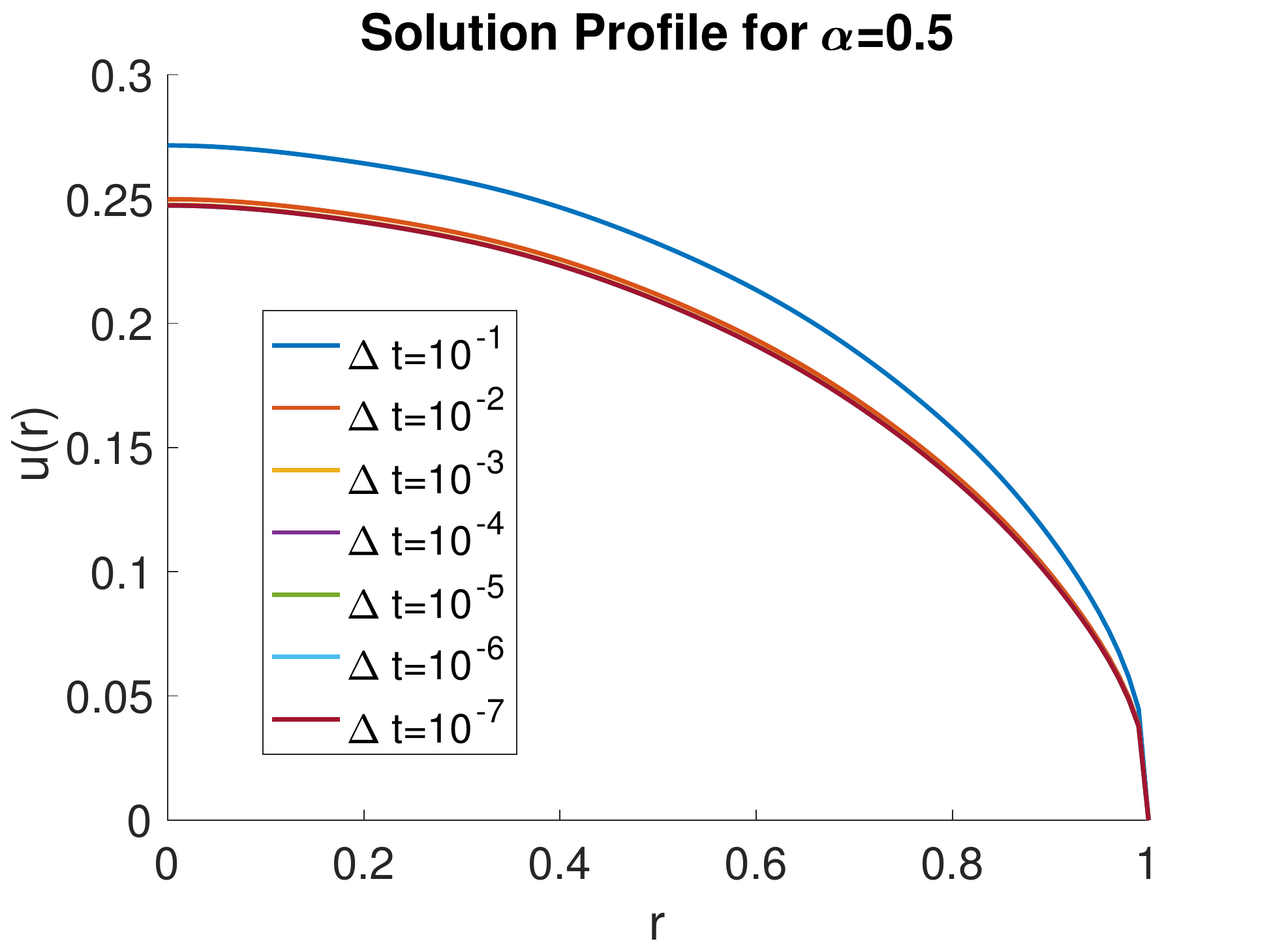}
\includegraphics[width=0.45\textwidth]{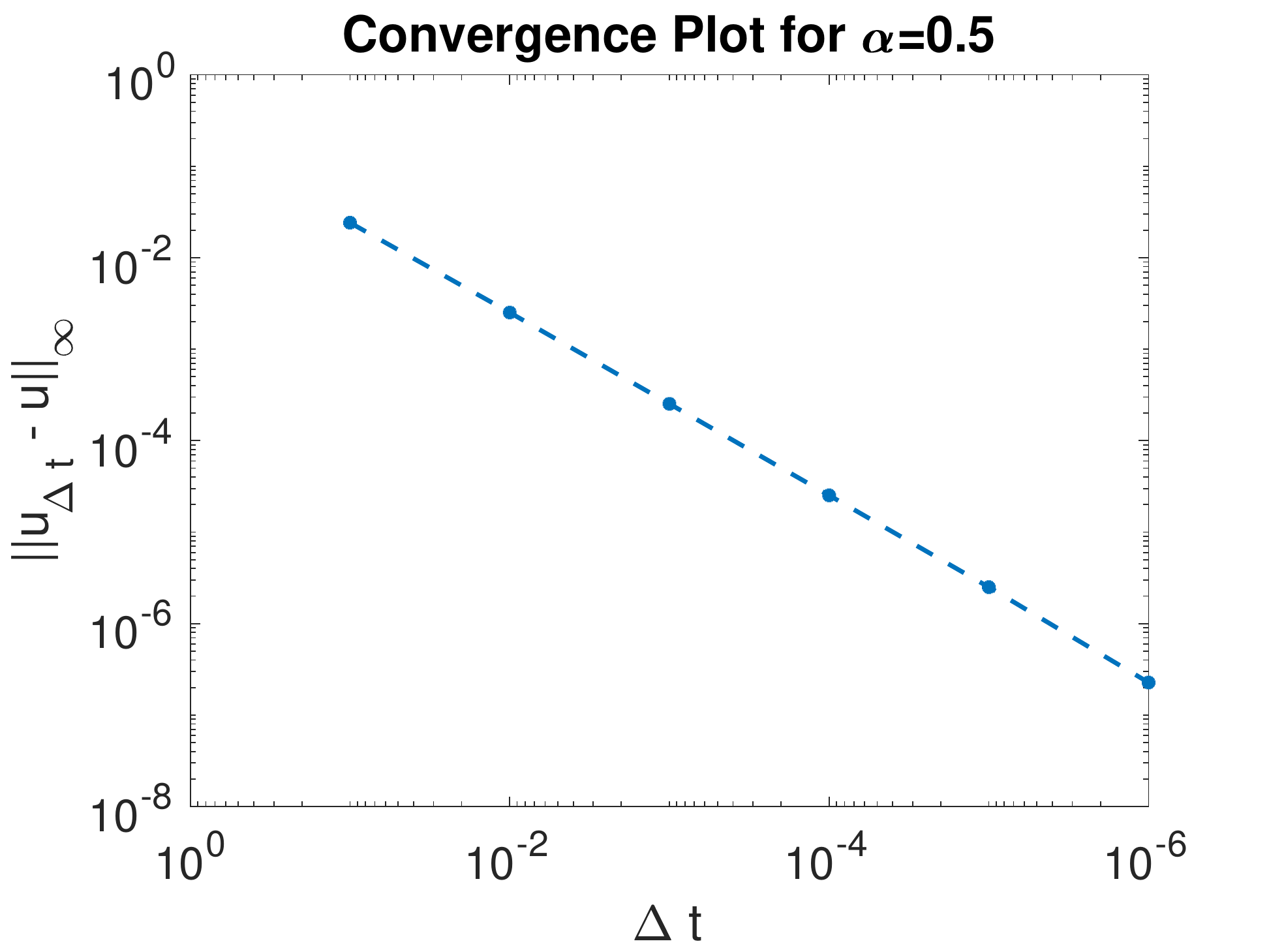}
\includegraphics[width=0.45\textwidth]{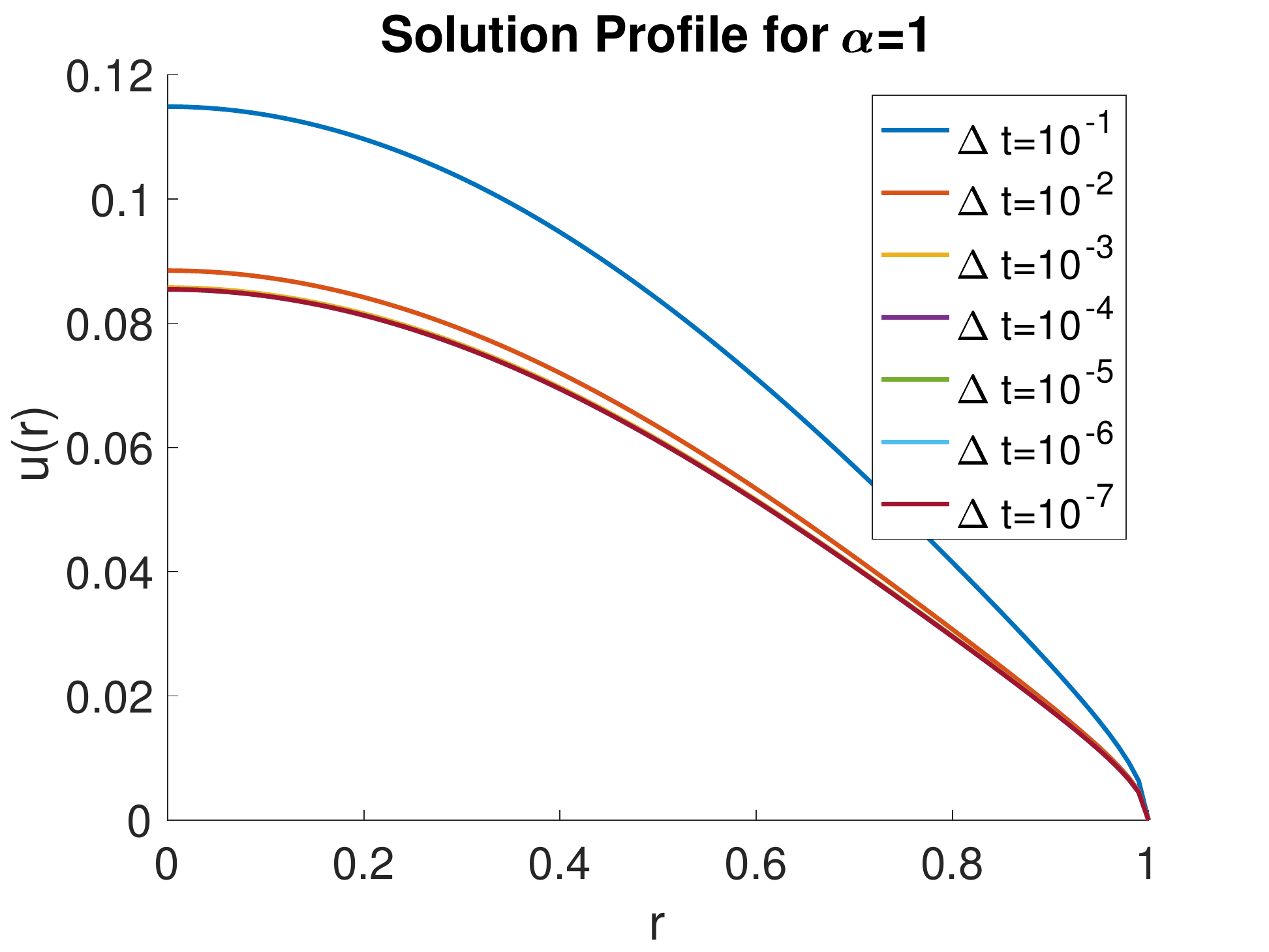}
\includegraphics[width=0.45\textwidth]{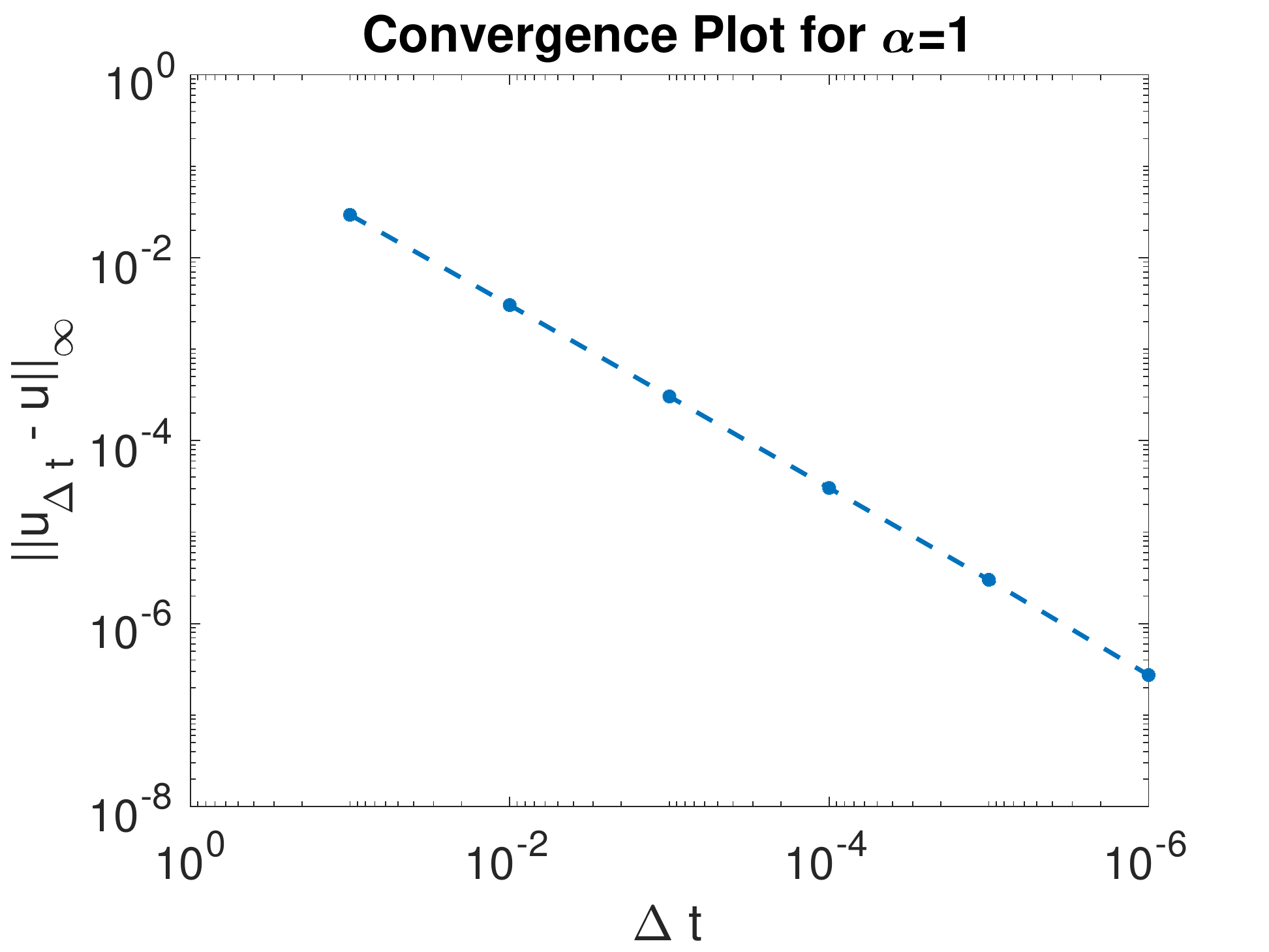}
\includegraphics[width=0.45\textwidth]{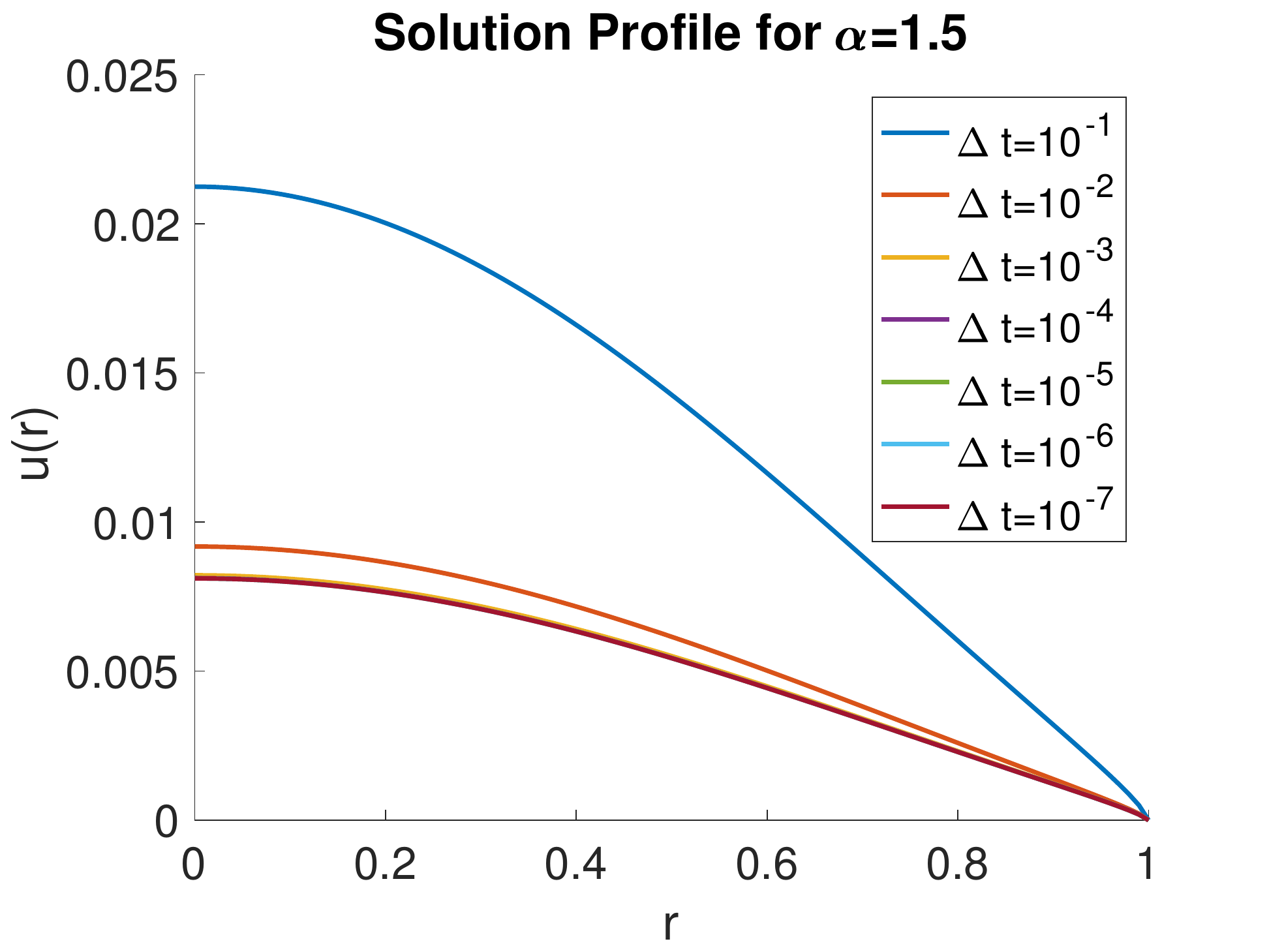}
\includegraphics[width=0.45\textwidth]{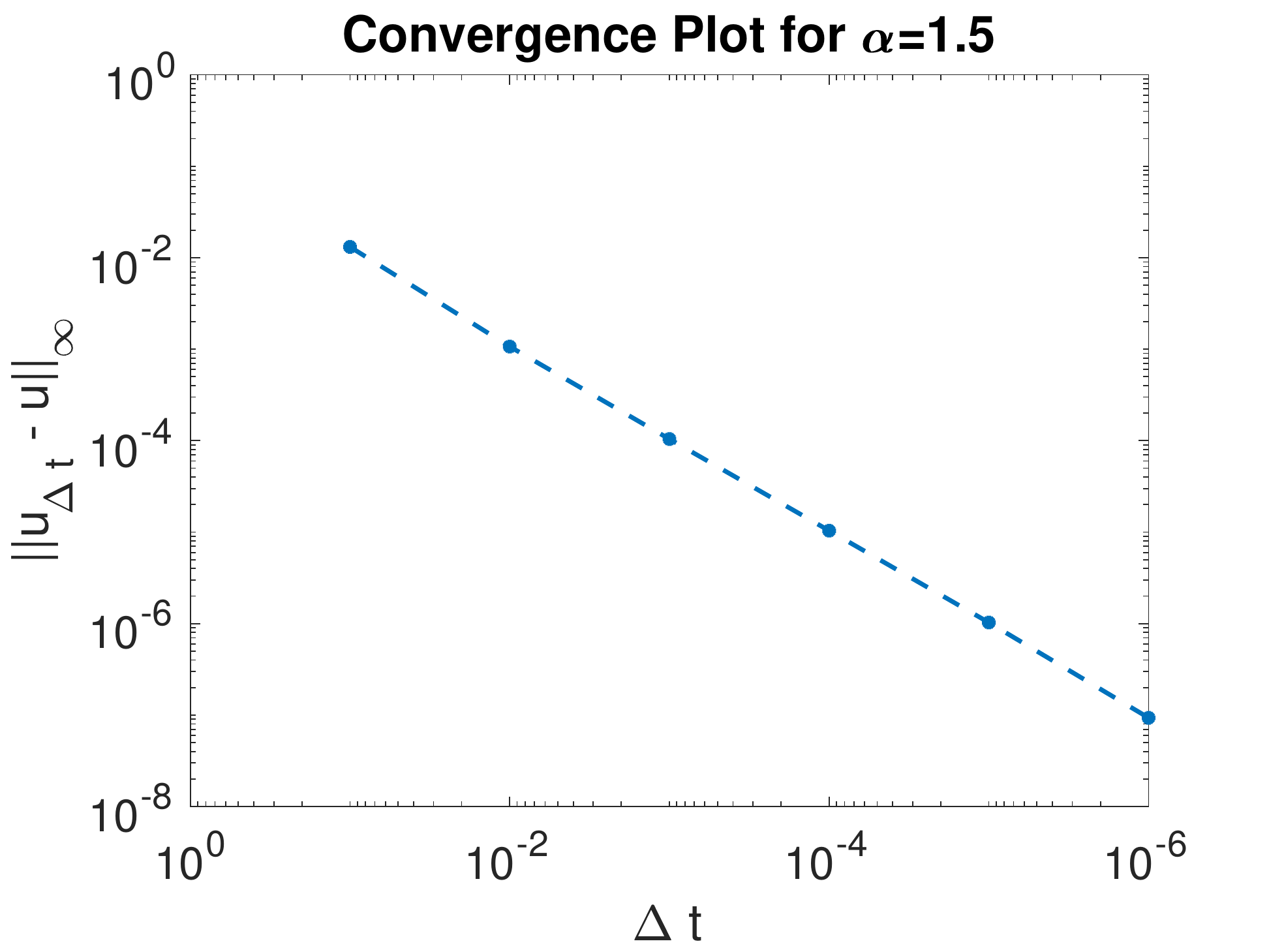}
\caption{Solution to \cref{equ:diffusion} for $\alpha=0.5,1.0,1.5$. The left column shows the solution $u(|\bx|) = u(|\bx|,1)$ at time $T=1$; the right column shows the solution error $\|u_{\Delta t} - u\|_\infty$. The convergence rate is about $\mathcal{O}(\Delta t)$ in all cases, which is the same as the implicit method for the normal diffusion equation. Note that the convergence rate is $\alpha$-independent despite the fact that the initial condition is only H\"older continuous.}
\label{fig:diffusion}
\end{figure}

\section{Conclusion}

In this paper, we proposed and implemented spectral methods for approximating the fractional Laplacian and solving the fractional Poisson problem in 2D and 3D unit balls. It is based on a recent result in \cite{dyda2017fractional}. The spectral methods can be accurate if the solution or the right hand side has the correct regularity. In general, we can only expect H\"older continuity for the solution to a fractional Laplace problem. It causes trouble when we apply finite difference or finite element methods, which assume the existence of higher order derivatives or that the solution can be well represented by polynomials. The spectral method, however, avoids the reduced regularity problem by the explicit formulation of the eigenfunctions and eigenvalues of the fractional Laplacian in the unit balls under the weighted $L^2$ space. The resulting method enjoys spectral accuracy for all fractional index $\alpha\in (0,2)$ and is computationally efficient due to the orthogonality of the basis functions. Numerical experiments show consistency with our analysis. 

\bibliographystyle{mybib}
\bibliography{biblio}

\end{document}